\theoremstyle{plain}
\newtheorem{theorem}{Theorem}[section]
\newtheorem*{theorem*}{Theorem}
\newtheorem{lemma}[theorem]{Lemma}
\newtheorem{proposition}[theorem]{Proposition}
\theoremstyle{definition}
\newtheorem{definition}[theorem]{Definition}
\newtheorem{example}[theorem]{Example}
\theoremstyle{remark}
\newtheorem{remark}[theorem]{Remark}
\numberwithin{equation}{section}
\newcommand{\C}{\mathbb{C}}
\newcommand{\R}{\mathbb{R}}
\newcommand{\M}{\mathcal{M}}
\newcommand{\N}{\mathbb{N}}
\newcommand{\X}{\mathbf{X}}
\newcommand{\Ss}{\mathbb{S}}
\newcommand{\eps}{\varepsilon}
\newcommand{\mc}{\mathcal}
\newcommand{\dd}{\mathrm{d}}
\newcommand{\e}{\mathbf{e}}
\DeclareMathOperator{\vol}{vol}
\DeclareMathOperator{\Tr}{Tr}
\DeclareMathOperator{\WF}{WF}
\DeclareMathOperator{\ran}{ran}
\DeclareMathOperator{\id}{id}
\DeclareMathOperator{\Id}{id}
\DeclareMathOperator{\supp}{supp}
\DeclareMathOperator{\rk}{rank}
\DeclareMathOperator{\ind}{ind}
\DeclareMathOperator{\E}{\mathcal{E}}
\DeclareMathOperator{\F}{\mathcal{F}}
\DeclareMathOperator{\End}{End}
\DeclareMathOperator{\sk}{sk}
\DeclareMathOperator{\RR}{\mathbf{R}}
\newcommand{\be}{\begin{equation}}
\newcommand{\ee}{\end{equation}}
\title
[Generic dynamical properties of connections on vector bundles]
{Generic dynamical properties of connections on vector bundles}
\author[M. Ceki\'{c}]{Mihajlo Ceki\'{c}}
\date{\today}
\address{Laboratoire de Math\'{e}matiques d'Orsay, Univ. Paris-Sud, CNRS, Universit\'{e} Paris-Saclay, 91405 Orsay, France}
\email{mihajlo.cekic@u-psud.fr}
\author[T. Lefeuvre]{Thibault Lefeuvre}
\address{Laboratoire de Mathématiques d’Orsay, Univ. Paris-Sud, CNRS, Université Paris-Saclay, 91405 Orsay, France}
\email{thibault.lefeuvre@u-psud.fr}
\begin{document}

\begin{abstract}
Given a smooth Hermitian vector bundle $\mc{E}$ over a closed Riemannian manifold $(M,g)$, we study generic properties of unitary connections $\nabla^{\mc{E}}$ on the vector bundle $\mc{E}$. First of all, we show that twisted Conformal Killing Tensors (CKTs) are generically trivial when $\dim(M) \geq 3$, answering an open question of Guillarmou-Paternain-Salo-Uhlmann \cite{Guillarmou-Paternain-Salo-Uhlmann-16}. In negative curvature, it is known that the existence of twisted CKTs is the only obstruction to solving exactly the twisted cohomological equations which may appear in various geometric problems such as the study of transparent connections. The main result of this paper says that these equations can be generically solved. As a by-product, we also obtain that the induced connection $\nabla^{\mathrm{End}(\E)}$ on the endomorphism bundle $\mathrm{End}(\E)$ has generically trivial CKTs as long as $(M,g)$ has no nontrivial CKTs on its trivial line bundle. Eventually, we show that, under the additional assumption that $(M,g)$ is Anosov (i.e. the geodesic flow is Anosov on the unit tangent bundle), the connections are generically \emph{opaque}, namely there are no non-trivial subbundles of $\mc{E}$ which are generically preserved by parallel transport along geodesics. The proofs rely on the introduction of a new microlocal property for (pseudo)differential operators called \emph{operators of uniform divergence type}, and on perturbative arguments from spectral theory (especially on the theory of Pollicott-Ruelle resonances in the Anosov case).
\end{abstract}

\maketitle

\section{Introduction}

\subsection{Generic absence of twisted Conformal Killing Tensors}

\subsubsection{Statement of the main result.} 

Consider $(\mc{E},\nabla^{\mc{E}})$ a Hermitian vector bundle equipped with a unitary connection over a smooth Riemannian $n$-manifold $(M,g)$ with $n \geq 2$. Let $SM$ be the unit sphere bundle and $\pi : SM \rightarrow M$ be the projection. We consider the pullback bundle $(\pi^* \mc{E}, \pi^* \nabla^{\mc{E}})$ over $SM$ and we will forget the $\pi^*$ in the following in order to simplify the notations. Denote by $X$ the geodesic vector field and define the operator $\X := \nabla^{\mc{E}}_X$, acting on sections of $C^\infty(SM,\mc{E})$. By standard Fourier analysis, we can write $f \in C^\infty(SM,\mc{E})$ as $f = \sum_{m \geq 0} f_m$, where $f_m \in C^\infty(M, \Omega_m \otimes \mc{E})$ and pointwise in $x \in M$:
\[
\Omega_m \otimes \mc{E} := \ker(\Delta^{\mc{E}}_v + m(m+n-2)),
\]
is the kernel of the vertical Laplacian (note that this Laplacian is independent of the connection $\nabla^{\mc{E}}$, it only depends on $\mc{E}$ and on $g$). Elements in this kernel are called the \emph{twisted spherical harmonics of degree $m$}. We will say that $f \in C^\infty(SM,\mc{E})$ has \emph{finite Fourier content} if its expansion in spherical harmonics only contains a finite number of terms. The operator $\X$ maps
\begin{equation}\label{eq:XX}
\X : C^\infty(M, \Omega_m \otimes \mc{E}) \rightarrow C^\infty(M, \Omega_{m-1} \otimes \mc{E}) \oplus C^\infty(M, \Omega_{m+1} \otimes \mc{E})
\end{equation}
and can be decomposed as $\X = \X_+ + \X_-$, where, if $u \in C^\infty(M,\Omega_m \otimes \mc{E})$, $\X_+u \in C^\infty(M,\Omega_{m+1} \otimes \mc{E})$ denotes the orthogonal projection on the twisted spherical harmonics of degree $m+1$. The operator $\X_+$ is elliptic and thus has finite-dimensional kernel whereas $\X_-$ is of divergence type (see \S\ref{section:microlocal} for definitions). Moreover, $\X_+^* = -\X_-$, where the adjoint is computed with respect to the canonical $L^2$ scalar product on $SM$ induced by the Sasaki metric. We refer to the original articles of Guillemin-Kazhdan \cite{Guillemin-Kazhdan-80, Guillemin-Kazhdan-80-2} for a description of these facts and to \cite{Guillarmou-Paternain-Salo-Uhlmann-16} for a more modern exposition.

\begin{definition}
We call \emph{twisted Conformal Killing Tensors} (CKTs) elements in the kernel of $\X_+|_{C^\infty(M,\Omega_m \otimes \mc{E})}$. 
\end{definition}

We say that the twisted CKTs are trivial when the kernel is reduced to $\left\{0\right\}$. The goal of this paper is to investigate the generic properties of non trivial twisted CKTs. It is known that the kernel of $\X_+$ is invariant by a conformal change of the metric $g$ (see \cite[Section 3.6]{Guillarmou-Paternain-Salo-Uhlmann-16}). Moreover, in negative curvature, it is known that there exists $m_0 \geq 0$ such that for all $m \geq m_0$, $\ker \X_+|_{C^\infty(M,\Omega_m \otimes \mc{E})}$ is trivial (see \cite[Theorem 4.5]{Guillarmou-Paternain-Salo-Uhlmann-16}). The number $m_0$ can be estimated explicitly in terms of an upper bound of the sectional curvature of $(M,g)$ and the curvature of the vector bundle $\mc{E}$. This should still be true for Anosov manifolds (namely, manifolds whose geodesic flow is uniformly hyperbolic on the unit tangent bundle) although it is still an open question for the moment. In particular, this is true for Anosov surfaces since any Anosov surface $(M,g)$ is conformally equivalent to a metric of negative curvature and twisted CKTs are invariant by conformal changes. Note that there are examples of CKTs on any manifold of any dimension $n \geq 2$: simply consider the vector bundle $(TM,\nabla_{\mathrm{LC}}) \rightarrow M$ equipped with the Levi-Civita connection; then the ``tautological section'' $s \in C^\infty(SM,\pi^*TM)$ defined by $s(x,v) := v$ satisfies $\X_+ s = 0$. Nevertheless, it was conjectured in \cite[Section 1]{Guillarmou-Paternain-Salo-Uhlmann-16} that the absence of twisted CKTs should be a generic property. In this paper, we answer positively to this question:

\begin{theorem}
\label{theorem:main}
Let $\mc{E}$ be a Hermitian vector bundle over a smooth Riemannian manifold $(M,g)$ with $\dim M \geq 3$\footnote{In the case $\dim M = 2$, the operators $\mathbf{X}_{\pm}$ are elliptic which changes the problem. For our results in this case, see Proposition \ref{prop:n=2}.}. Then, for any $k \geq 2$, there is a residual set of unitary connections with regularity $C^k$ such that $(\mc{E},\nabla^{\mc{E}})$ has no CKTs.
\end{theorem}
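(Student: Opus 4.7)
I invoke the Baire category theorem. For each integer $m \geq 1$, let
\[
\mc{U}_m^k := \bigl\{ \nabla^{\mc{E}} \in \mc{A}^k\ :\ \ker \X_+|_{C^\infty(M, \Omega_m \otimes \mc{E})} = \{0\} \bigr\},
\]
where $\mc{A}^k$ denotes the (affine Banach, hence Baire) space of $C^k$ unitary connections on $\mc{E}$. It suffices to show that each $\mc{U}_m^k$ is open and dense, since then $\bigcap_{m \geq 1} \mc{U}_m^k$ is residual and its elements carry no non-trivial CKTs. Openness follows from the upper semi-continuity of kernel dimension for the self-adjoint elliptic operator $\X_+^* \X_+$ on $\Omega_m \otimes \mc{E}$, whose kernel coincides with that of the over-determined elliptic $\X_+$ and which depends continuously on $\nabla^{\mc{E}}$.

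Density is the main step. Fix $\nabla^{\mc{E}}_0 \in \mc{A}^k$ with non-trivial finite-dimensional $K := \ker \X_+|_{\Omega_m \otimes \mc{E}}$, and consider perturbations $\nabla^{\mc{E}}_t := \nabla^{\mc{E}}_0 + tB$, with $B \in C^\infty(M, T^*M \otimes \mathfrak{u}(\mc{E}))$. Denoting by $\X_+^{(t)}$ the associated operator, one has $\X_+^{(t)} = \X_+ + t\mathbf{B}_+$, where $\mathbf{B}_+$ is the projection onto $\Omega_{m+1} \otimes \mc{E}$ of multiplication by $B(v) \in \Omega_1 \otimes \mathfrak{u}(\mc{E})$. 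Expanding a would-be CKT as $u(t) = u_0 + t u_1 + O(t^2)$ yields $u_0 \in K$ at zeroth order and the first-order obstruction $\mathbf{B}_+ u_0 \in \ran \X_+$. Using $\X_+^* = -\X_-$, this translates to $\langle \mathbf{B}_+ u_0, w\rangle_{L^2(SM)} = 0$ for every $w \in \ker \X_-|_{\Omega_{m+1} \otimes \mc{E}}$. A standard blow-up argument---take an $L^2$-unit sequence $u_n \in \ker \X_+^{(t_n)}$, extract via elliptic regularity an $H^s$-subsequential limit $u_0 \in K$, and control $P_{K^\perp}(u_n - u_0)/t_n$ using the closed-range property of $\X_+|_{K^\perp}$---shows that if the linear map
\[
\tau_B : K \longrightarrow (\ker \X_-|_{\Omega_{m+1} \otimes \mc{E}})^*, \qquad \tau_B(u_0)(w) := \langle \mathbf{B}_+ u_0, w\rangle,
\]
is injective, then $\ker \X_+^{(t)}|_{\Omega_m \otimes \mc{E}} = \{0\}$ for all sufficiently small $t > 0$.

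The crux, and the step I expect to be the main obstacle, is the following pointwise non-degeneracy lemma producing such a $B$: for every non-zero $u_0 \in K$, there exist a point $x_0 \in M$, a skew-Hermitian covector $\beta \in T_{x_0}^*M \otimes \mathfrak{u}(\mc{E}_{x_0})$, and a form $w \in \ker \X_-|_{\Omega_{m+1} \otimes \mc{E}}$ such that
\[
\int_{S_{x_0}M} \bigl\langle \beta(v)\, u_0(x_0, v),\, w(x_0, v)\bigr\rangle_{\mc{E}_{x_0}}\, \dd v \ \neq \ 0.
\]
Granted this, a bump-function cutoff of $\beta$ near $x_0$ gives a global perturbation, and a standard Zariski-open genericity argument on the finite-dimensional $K$ produces a single $B$ for which $\tau_B$ is injective on all of $K$. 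The hypothesis $\dim M \geq 3$ enters exactly here: in this regime $\X_-$ is of divergence type and its kernel on $\Omega_{m+1} \otimes \mc{E}$ is infinite-dimensional, hence pointwise rich enough to detect arbitrary non-zero $u_0$ via such bilinear pairings. In dimension two, $\X_-$ is itself elliptic with only a finite-dimensional kernel, which is precisely why the argument breaks down there and a different statement is needed, as signalled by the footnote.
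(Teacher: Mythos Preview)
Your overall strategy---Baire category, openness via ellipticity, density via perturbation---matches the paper's, and the pointwise non-degeneracy lemma you identify as the crux is essentially the paper's key lemma. The paper phrases the perturbation step via the second variation of the eigenvalue sum $\lambda_\Gamma = \Tr(\Delta_+^\Gamma \Pi^\Gamma)$ for $\Delta_+^\Gamma = (\X_+^\Gamma)^* \X_+^\Gamma$ (the first variation vanishes and one finds $\ddot\lambda_0 = 2\sum_i \|\pi_{\ker \X_-} A_+ u_i\|^2$), whereas you use a first-order blow-up argument; these are equivalent formulations, isolating the same obstruction $\pi_{\ker \X_-} \mathbf{B}_+ u_0$.

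There is, however, a genuine gap. Your ``standard Zariski-open genericity argument'' producing a single $B$ with $\tau_B$ injective on all of $K$ is not justified, and the underlying principle is false in general: take $K = W = \C^2$ and $R = \bigl\{\begin{psmallmatrix} a & b \\ 0 & 0\end{psmallmatrix}\bigr\} \subset \mathrm{Hom}(K, W)$; for every nonzero $u_0$ the set $\{T \in R : Tu_0 = 0\}$ is a proper subspace of $R$, yet $R$ contains no injective map. The pointwise lemma only gives, for each $u_0 \neq 0$, a $B$ with $\tau_B(u_0) \neq 0$, and this does not assemble into a single $B$ working for all $u_0$ without further structure. The paper avoids this entirely by \emph{iterating}: eject one eigenvalue at a time, dropping $\dim K$ by at least one at each step, and repeat finitely often. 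Your own blow-up argument already yields the iterative step (if $\tau_B(u_0) \neq 0$ for \emph{some} $u_0$, then $\dim \ker \X_+^{(t)} \leq d - 1$ for small $t$), so the fix is simply to iterate rather than attempt a one-shot argument.

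You also correctly locate the role of $\dim M \geq 3$, but you leave the mechanism vague. The paper makes it precise by introducing operators of \emph{uniform divergence type}: for $n \geq 3$ one has $\Sigma_{|\xi|=1} \ker \sigma_{\X_-}(x,\xi) = \Omega_{m+1}(x) \otimes \E_x$, and a Lagrangian-state argument then shows that $\mathrm{ev}_x \colon \ker \X_- \to \Omega_{m+1}(x) \otimes \E_x$ is surjective at every $x$. Combined with an algebraic lemma that every element of $\Omega_m(x) \otimes \E_x$ is of the form $\Gamma_- w$ for some $\Gamma \in T_x^*M \otimes \mathrm{End}_{\mathrm{sk}}(\E_x)$ and $w \in \Omega_{m+1}(x) \otimes \E_x$, this yields the pointwise pairing you need.
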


Observe that this result holds without any assumption on the curvature. For fixed $m \in \N$, absence of twisted CKTs is an open property: this is a mere consequence of standard elliptic theory for (pseudo)differential operators. As a consequence, in order to prove that connections without CKTs form a residual set (i.e. a countable intersection of open and dense sets), it is sufficient to show that for fixed $m \in \N$, unitary connections without twisted CKTs of degree $m$ form a dense set of unitary connections with regularity $C^k$: we prove this property in Theorem \ref{theorem:ckts}. As mentioned earlier, if $(M,g)$ has negative curvature, there exists a maximal $m_0 \in \N$, such that for any $m \geq m_0$, one knows that there are no CKTs of degree $m$ and thus the set of connections with no nontrivial CKTs residual is also open. 

In order to prove that unitary connections without twisted CKTs of degree $m$ form a dense set of unitary connections with regularity $C^k$, we use a perturbative argument for the eigenvalues of a certain natural Laplacian operator $\Delta_+ = (\X_+)^*\X_+ \geq 0$ acting on $C^\infty(M,\Omega_m \otimes \mc{E})$ (see \S \ref{ssection:perturbation}) whose kernel is given by the twisted CKTs of degree $m$. Due to the affine structure of the set of connections, it is possible to compute explicitly the first and second variation (with respect to the connection) of the $0$ eigenvalue of the operator $\Delta_+$. The first variation vanishes but the second variation\footnote{We also point out that we initially tried to apply the perturbation theory of Uhlenbeck \cite[Theorem 1]{Uhlenbeck-76} but we could not make it work. The main reason, it seems, is that we need to look at a second order variation of the eigenvalues at $0$ whereas the transversality theory in \cite{Uhlenbeck-76} is a ``first-order perturbation'' theory.} is given by some explicit non-negative quantity (see Lemma \ref{lemma:second-variation}). It is then sufficient to produce a perturbation of the connection such that this second variation is strictly positive. 

\subsubsection{Operators of uniform divergence type.}

In order to do so, we introduce a new notion of microlocal analysis which we call \emph{operators of uniform divergence type} (see \S\ref{section:microlocal}). This is the main input of the present paper and could be of independent interest. These (pseudo)differential operators are of the form $Q : C^\infty(M,F) \rightarrow C^\infty(M,E)$ where $\rk(F) > \rk(E)$. They are of \emph{divergence type} in the sense that the principal symbol of the adjoint is injective for all $(x,\xi) \in T^*M \setminus \left\{ 0 \right\}$, i.e. $\sigma_{Q^*}(x,\xi) \in \mathrm{Hom}(E_x,F_x)$ is injective (equivalently the principal symbol $\sigma_Q(x, \xi) \in \mathrm{Hom}(F_x, E_x)$ is surjective). The uniform divergence type property asserts that
\[
\Sigma_{|\xi|=1} \ker \sigma_Q(x,\xi) = F_x,
\]
for all $x \in M$ and allows to describe the \emph{values} that elements in $\ker Q|_{C^\infty(M,F)}$ can take at a given point $x$. In particular, under this property, the map
\[
\mathrm{ev}_x : \ker Q|_{C^\infty(M,F)} \rightarrow F_x, ~~~ \mathrm{ev}_x(f) := f(x)
\]
is surjective for all $x \in M$. 

We also point out that the perturbation used is a priori \emph{global} on $M$ but it could be interesting to see whether our result can be made local in the following sense: given an open subset $\Omega \subset M$, the equation $\X_+ u = 0$ on $\Omega$ has generically (with respect to the connection) only trivial solutions. Such a local perturbative argument is developed in \cite{Kruglikov-Matveev-16} who show that generically a metric has no \emph{Killing tensors} (see \S\ref{ssection:twisted-tensors} for a definition). This would require extra work on operators of uniform divergence type and we plan to investigate this in the future. More generally, the method developed in the present article seems fairly robust in order to deal with general linear perturbations of gradient-type or Laplacian-type operators.


Finally, let us briefly mention that in negative curvature, twisted CKTs are an obstruction to solving exactly some transport equations called \emph{twisted cohomological equations} which appear in some geometric settings such as the study of transparent pairs of connections. 
Assume $(M, g)$ is negatively curved and denote $r := \rk \E$. A closed geodesic on $M$ can be identified with a periodic orbit $\gamma$ for the geodesic flow on $SM$, and one can look at the holonomy induced by the pullback connection along $\gamma$, i.e. the parallel transport of sections of $\pi^*\mc{E}$ along the geodesic lines. We say that a connection is \emph{transparent} if the holonomy is trivial along all periodic orbits of the geodesic flow (see \cite{Paternain-09,Paternain-11,Paternain-12,Paternain-13,Guillarmou-Paternain-Salo-Uhlmann-16,Cekic-Lefeuvre-20} for the study of this question). In this case, it is known that $\pi^*\mc{E}$ is trivial over $SM$ (see \cite{Cekic-Lefeuvre-20} for instance) and one can prove that there exists a smooth family $(e_1,...,e_r) \in C^\infty(SM,\pi^* \mc{E})$ which is independent at every point $(x,v) \in SM$ (it trivializes the bundle $\pi^*\mc{E}$ over $SM$) and such that $\pi^*\nabla^{\mc{E}}_X e_i = 0$ for $i=1,...,r$. We call such an equality/equation a \emph{twisted cohomological equation}: it is a transport equation on the unit tangent bundle involving some vector bundle. More generally, twisted cohomological equations are of the form $\pi^*\nabla^{\mc{E}}_X u = f$, where $f = f_0 + ... + f_N$ and for $0 \leq i \leq N$, $f_i \in C^\infty(M,\Omega_i \otimes \E)$, i.e. $f$ has finite Fourier content. In negative curvature, it is known that such a twisted cohomological equation imply that the sections $e_i \in C^\infty(SM,\pi^* \mc{E})$ have finite Fourier content (see \cite[Theorem 4.1]{Guillarmou-Paternain-Salo-Uhlmann-16}). If one can prove that the $e_i$ are actually \emph{independent of the velocity variable} i.e. $e_i \in C^\infty(M,\Omega_0 \otimes \mc{E}) \simeq C^\infty(M,\mc{E})$, then this implies that they are actually sections living on the base manifold $M$ and the equation $\pi^* \nabla^{\mc{E}}_X e_i = 0$ is equivalent to $\nabla^{\mc{E}} e_i = 0$, i.e. these sections are parallel. In other words the vector bundle $(\mc{E},\nabla^{\mc{E}})$ over $M$ is isomorphic to the trivial bundle $(\C^r,d)$ equipped with the trivial flat connection. In order to prove that the $e_i$ are indeed independent of the velocity variable, it is sufficient to know that the connection $\nabla^{\mc{E}}$ has no non trivial twisted CKTs (see \cite[Theorem 5.1]{Guillarmou-Paternain-Salo-Uhlmann-16}), hence the importance of their study. The existence/non-existence of CKTs can also be investigated on manifolds with boundary: it is proved in \cite{Dairbekov-Sharafutdinov-10,Guillarmou-Paternain-Salo-Uhlmann-16} that there are no (twisted) CKTs which identically vanish on the boundary or on a hypersurface. 

\subsection{Generic absence of CKTs on the endomorphism bundle} 

There is a more general question than that of uniqueness for transparent connections. Indeed, one can ask the following inverse problem: does the holonomy of the connection along closed geodesics \emph{stably} determine the connection? We refer to \cite{Cekic-Lefeuvre-20} for an extensive discussion of this question: this is intimately related to the existence of non-trivial CKTs for the induced connection on the endomorphism bundle $\mathrm{End}(\E)$. Recall that a unitary connection $\nabla^{\mc{E}}$ on the Hermitian vector bundle $\mc{E} \rightarrow M$, induces a canonical connection $\nabla^{\mathrm{End}(\E)}$ on the endomorphism bundle (see \S\ref{ssection:absence-ckts-endomorphism} for a definition). We can also investigate the existence/absence of CKTs for this particular type of connection $\nabla^{\mathrm{End}(\E)}$ on $\mathrm{End}(\E)$. It is straightforward to check that $\nabla^{\mathrm{End}(\E)} \mathbbm{1}_{\mc{E}} = 0$ that is, there is always a CKT of order $m=0$. We will say that $\nabla^{\mathrm{End}(\E)}$ has only trivial CKTs if $\mathbbm{1}_{\mc{E}}$ is the only CKT. We show the following:

\begin{theorem}
\label{theorem:generic-ckts-endomorphism}
Let $\mc{E}$ be a Hermitian vector bundle over a smooth Riemannian manifold $(M,g)$. Assume $(M,g)$ has no CKTs on its trivial line bundle. Then, for any $k \geq 2$, there is a residual set of unitary connections with regularity $C^k$ such that $(\mathrm{End}(\mc{E}),\nabla^{\mathrm{End}(\E)})$ has no non-trivial CKTs.
\end{theorem}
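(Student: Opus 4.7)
The plan is to adapt the eigenvalue-perturbation argument underlying Theorem~\ref{theorem:main} to the restricted class of perturbations coming from variations of $\nabla^{\mc{E}}$. The starting point is the orthogonal decomposition $\mathrm{End}(\mc{E}) = \underline{\C}\cdot\mathbbm{1}_{\mc{E}} \oplus \mathrm{End}_0(\mc{E})$, which is preserved by every induced connection because $\nabla^{\mathrm{End}(\mc{E})}\mathbbm{1}_{\mc{E}}=0$ and $\mathrm{ad}$ takes values in traceless endomorphisms. Any CKT $u\in C^\infty(M,\Omega_m\otimes\mathrm{End}(\mc{E}))$ decomposes as $u = \tfrac{1}{r}\mathrm{tr}(u)\cdot\mathbbm{1}_{\mc{E}} + u_0$, with $\mathrm{tr}(u)\in C^\infty(M,\Omega_m)$ a CKT on the trivial line bundle; by the standing hypothesis, $\mathrm{tr}(u)$ is constant when $m=0$ and vanishes when $m\geq 1$. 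Thus, modulo the trivial CKT $\mathbbm{1}_{\mc{E}}$ in degree $0$, the problem reduces to showing that for each fixed $m\geq 0$, generic unitary $C^k$ connections on $\mc{E}$ induce a connection on $\mathrm{End}_0(\mc{E})$ with no traceless CKTs of degree $m$.

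Fix such an $m$. For $A\in C^k(M, T^*M\otimes\mathrm{End}_{sk}(\mc{E}))$ the family $\nabla^{\mc{E}}_t := \nabla^{\mc{E}} + tA$ of unitary connections induces $\nabla^{\mathrm{End}(\mc{E})}_t = \nabla^{\mathrm{End}(\mc{E})} + t\,\mathrm{ad}(A)$, and I study the non-negative self-adjoint operator $\Delta_{+,t}^{\mathrm{End}} := (\mathbf{X}_{+,t}^{\mathrm{End}})^*\mathbf{X}_{+,t}^{\mathrm{End}}$ on $C^\infty(M,\Omega_m\otimes\mathrm{End}_0(\mc{E}))$, whose kernel at $t=0$ is the finite-dimensional space of traceless CKTs. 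Following the proof of Theorem~\ref{theorem:main}, the first variation of the zero eigenvalue vanishes and the second variation is an explicit non-negative quadratic form in $A$ evaluated at a CKT $u$ --- now depending on the commutator $[\pi^*A(X),u]$ rather than on an arbitrary endomorphism-valued perturbation acting on $u$. The aim is to exhibit $A$ making this form strictly positive on the whole kernel, so that the zero eigenvalue leaves zero. The restriction to perturbations of the form $\mathrm{ad}(A)$ is not a pointwise obstruction: since the centraliser of $\mathrm{End}_{sk}(\mc{E})_x$ in $\mathrm{End}(\mc{E})_x$ is $\C\mathbbm{1}_{\mc{E}_x}$, for every non-zero $\xi\in\mathrm{End}_0(\mc{E})_x$ there exists $B\in\mathrm{End}_{sk}(\mc{E})_x$ with $[B,\xi]\neq 0$, so $\{\mathrm{ad}(A)\}$ still separates non-zero traceless endomorphisms.

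To globalise this pointwise linear-algebraic input I will invoke the uniform divergence type property for $\mathbf{X}_+^{\mathrm{End}}$ acting on $\Omega_m\otimes\mathrm{End}_0(\mc{E})$, which is inherited from the corresponding property used for Theorem~\ref{theorem:main} since the symbolic structure depends only on the base geometry and the vertical Laplacian. The resulting surjectivity of the evaluation map on $\ker\mathbf{X}_+^{\mathrm{End}}$ allows one to prescribe the value of a CKT at a chosen point $x_0$, after which a cutoff $A$ supported near $x_0$ in the direction of a suitable $B$ yields strict positivity of the second variation on that particular CKT. The main technical obstacle I expect is upgrading this to \emph{simultaneous} strict positivity on the whole finite-dimensional kernel $\ker\Delta_+^{\mathrm{End}}|_{t=0}$; I plan to handle this by a transversality/induction-on-dimension argument, iteratively producing perturbations that decrease the kernel dimension by one, exploiting the fact that the space of admissible $A$'s is infinite-dimensional and uniformly rich at each point. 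Density for each fixed $m$, combined with openness via standard elliptic stability and intersection over $m\in\N$, then yields the desired residual set.
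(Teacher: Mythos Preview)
Your overall strategy—reduce to the trace-free part, set up the perturbation $\nabla^{\mc{E}}_t=\nabla^{\mc{E}}+tA$, compute the second variation, and iterate to kill CKTs one by one—matches the paper's, and your observation that the splitting $\mathrm{End}(\mc{E})=\C\mathbbm{1}_{\mc{E}}\oplus\mathrm{End}_0(\mc{E})$ is respected is correct. However, there is a genuine gap in the middle of the argument.

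You write that you will invoke the uniform divergence type property for $\X_+^{\mathrm{End}}$ and thereby obtain ``surjectivity of the evaluation map on $\ker\X_+^{\mathrm{End}}$'', allowing you to ``prescribe the value of a CKT at a chosen point''. This is the wrong operator. The operator $\X_+$ is of \emph{gradient} type (its symbol is injective), so $\ker\X_+$ is the finite-dimensional space of CKTs and its evaluation map is certainly not surjective onto the fibre. It is $\X_-$ that is of uniform divergence type (Lemma~\ref{lemma:uniform-x-}), and the surjectivity of $\mathrm{ev}_{x_0}$ holds on the \emph{infinite-dimensional} kernel $\ker\X_-|_{C^\infty(M,\Omega_{m+1}\otimes\mathrm{End}(\mc{E}))}$. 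The paper's argument runs by contradiction: the second variation is $\sum_i\|\pi_{\ker\X_-}[A,\cdot]_+u_i\|^2$; if this vanishes for all $A$, then $\langle u,[A,\cdot]_-w\rangle_{L^2}=0$ for every $w\in\ker\X_-$; localising $A$ by a bump function gives $\langle u(x_0),[A(x_0),\cdot]_-w(x_0)\rangle=0$ pointwise, and \emph{now} one uses that $w(x_0)$ can be prescribed arbitrarily in $\Omega_{m+1}(x_0)\otimes\mathrm{End}(\mc{E}_{x_0})$.

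The pointwise linear algebra you invoke is also too weak. Knowing that for every non-zero traceless $\xi$ there exists $B$ with $[B,\xi]\neq 0$ does not suffice: the quantity to control is $\langle u(x_0),[A,\cdot]_-w\rangle$ as $A$ ranges over $T^*_{x_0}M\otimes\mathrm{End}_{\mathrm{sk}}(\mc{E}_{x_0})$ and $w$ over $\Omega_{m+1}(x_0)\otimes\mathrm{End}(\mc{E}_{x_0})$, and you must show that vanishing of this for all such $A,w$ forces $u(x_0)=0$. The paper achieves this via an additional reduction to $u$ skew-Hermitian (Lemma~\ref{lemma:commutation-adjoint}), then writes $u=\sum p_i\otimes s_i$ in a basis $(s_i)$ of trace-free skew-Hermitian endomorphisms, and crucially uses Lemma~\ref{lemma:baby} (every trace-free skew-Hermitian matrix is a commutator $[A_1,\widetilde w]$ of two skew-Hermitian matrices) together with the surjectivity of $\Gamma_-$ on spherical harmonics (Lemma~\ref{lemma:algebraic-surjectivity-trivial-bundle}) to hit each $p_i\otimes s_i$. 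Your proposal does not contain this step, and without it the contradiction does not close.
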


It is not clear whether one can drop the assumption that $(M,g)$ has no nontrivial CKTs on its trivial line bundle. This is known to hold in a quite general context (see \cite[Corollary 3.6]{Paternain-Salo-Uhlmann-15}):
\begin{itemize}
\item If $(M,g)$ has negative curvature and more generally if $g$ is in the conformal class of a negatively-curved metric since CKTs are invariant by conformal changes,
\item If $(M,g)$ has non-positive curvature and transitive geodesic flow and more generally if $g$ is in the conformal class of such a metric.
\end{itemize}
It is still not known (although expected) whether Anosov Riemannian manifolds, namely Riemannian manifolds whose geodesic flow is Anosov on the unit tangent bundle (see \eqref{equation:anosov} for a definition), have no nontrivial CKTs on their trivial line bundle. This could follow from the conjectural statement that any Anosov Riemannian manifold lies in the conformal class of a negatively-curved metric. Once again, this is not known unless $\dim(M)=2$.

Observe that the main difference between the previous Theorem \ref{theorem:generic-ckts-endomorphism} and Theorem \ref{theorem:main} is that the CKT generated by $\mathbbm{1}_{\mc{E}}$ for $m=0$ can never be excluded. As in Theorem \ref{theorem:main}, it is sufficient to prove that for fixed $m$, one can perturb the connection by $\nabla^{\E} + \Gamma$ so that this new connection has no nontrivial CKTs of degree $m$ and the proof relies on the same principle of operators of uniform divergence type. Also, in negative curvature, this set is not only residual, but it is also open just as in Theorem \ref{theorem:main}.

\subsection{Generic opacity of connections} Let $(\varphi_t)_{t \in \R}$ be the geodesic flow generated by the geodesic vector field $X$ on $SM$. We study the parallel transport along geodesics on the pullback bundle $\pi^* \mc{E} \rightarrow SM$. For $t \in \R, (x,v) \in SM$, we denote by $C((x,v),t) : \mc{E}_x \rightarrow \mc{E}_{\pi(\varphi_t(x,v))}$ the parallel transport of sections along the geodesic segment $(\pi\varphi_s(x,v))_{s \in [0,t]}$.

\begin{definition}[Invariant subbundles]
Let $\mc{F} \rightarrow M$ be a smooth subbundle of $\mc{E} \rightarrow M$. We say that $\mc{F}$ is \emph{invariant} if the following holds: for all $(x,v) \in SM, f \in \mc{F}_x$, one has $C((x,v),t) f \in \mc{F}_{\pi(\varphi_t(x,v))}$.  
\end{definition}

There is actually a more general notion of invariant subbundles defined on an arbitrary manifold $\M$ carrying a flow (see \S\ref{ssection:remarks-endomorphisms}, in other words, one does not need to take $\M = SM$). We introduce the following terminology for connections without invariant subbundles:

\begin{definition}[Opaque connections]
We say that the connection $\nabla^{\mc{E}}$ on $\mc{E}$ is \emph{opaque} if any invariant subbundle is trivial, i.e. is either $\mc{E}$ or $\left\{0\right\}$.
\end{definition}

We could have also chosen the terminology \emph{irreducible connection} but we chose \emph{opaque} instead to contrast with the notion of \emph{transparent connection}.
From now on, we will assume that the geodesic flow of $(M,g)$ is Anosov (see \eqref{equation:anosov} for a definition). Using the well-studied theory of Pollicott-Ruelle resonances (see \cite{Liverani-04, Gouezel-Liverani-06,Butterley-Liverani-07,Faure-Roy-Sjostrand-08,Faure-Sjostrand-11,Faure-Tsuji-13,Dyatlov-Zworski-16} for further details), it is possible to define a spectral theory for the (non-elliptic) first order differential operator
\[
(\pi^* \nabla^{\mathrm{End}(\E)})_X : C^\infty(SM, \mathrm{End}(\pi^*\E)) \rightarrow C^\infty(SM, \mathrm{End}(\pi^*\E)).
\]
(We added the $\pi^*$ here to insist on the fact that all the objects are obtained as pullbacks of objects defined downstairs on $M$.) We prove in Lemma \ref{lemma:observations} that the existence of invariant subbundles is equivalent to the existence of a Pollicott-Ruelle resonance at $\lambda = 0$ for the operator $(\pi^* \nabla^{\mathrm{End}(\E)})_X$ (with smooth resonant states). As before, the constant section $\mathbbm{1}_{\mc{E}}$ is always a trivial resonant state, namely $(\pi^* \nabla^{\mathrm{End}(\E)})_X \mathbbm{1}_{\mc{E}} = 0$. Using arguments from spectral theory, we show that we can perturb the connection $\nabla^{\mc{E}}$ by $\nabla^{\mc{E}} + \Gamma$ (for an arbitrary small $\Gamma$) and eject all the resonant states at $0$, except $\mathbbm{1}_{\mc{E}}$. This gives the following Theorem:

\begin{theorem}
\label{theorem:generic-opacity}
Let $\mc{E}$ be a Hermitian vector bundle over a smooth Anosov Riemannian manifold $(M,g)$. Then, there exists $k_0 \geq 0$ such that for any $k \geq k_0$, there is an open dense subset of unitary $C^k$ connections on $\mc{E}$ which are opaque.
\end{theorem}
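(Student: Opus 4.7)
The plan is to reformulate opacity as a spectral condition at $\la=0$ for the transport operator on the endomorphism bundle, and then prove openness and density separately. By Lemma \ref{lemma:observations}, a unitary connection $\nabla^{\mc{E}}$ is opaque if and only if the smooth kernel of the first-order operator $P:=(\pi^*\nabla^{\mathrm{End}(\E)})_X$ on $SM$ is exactly $\C\mathbbm{1}_{\mc{E}}$. Writing a nearby connection as $\nabla^{\mc{E}}+\Gamma$ with $\Gamma \in C^k(M,T^*M\otimes\mathrm{End}_{\mathrm{sk}}(\mc{E}))$ skew-Hermitian, the perturbed operator is $P_\Gamma = P + \mathrm{ad}(\Gamma_X)$, a \emph{zeroth-order} perturbation in the anisotropic Sobolev calculus of Faure-Sj\"ostrand; the threshold $k_0$ is chosen so that this calculus applies uniformly over the relevant ball of perturbations.

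Openness is then immediate: upper semicontinuity of the total algebraic multiplicity of Pollicott-Ruelle resonances of $P_\Gamma$ in any fixed disc around $0$ is standard for zeroth-order perturbations, and $\mathbbm{1}_{\mc{E}}$ always contributes a smooth $0$-resonance to $P_\Gamma$, so the multiplicity is at least $1$ for every admissible $\Gamma$. Hence at an opaque connection (where the multiplicity equals $1$), every $C^k$-nearby unitary connection remains opaque.

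For density, fix an arbitrary starting connection and let $d+1$ denote the $0$-multiplicity of $P$; if $d=0$ there is nothing to show, so assume $d\geq 1$. Formal skew-adjointness of $P$ on $L^2(SM,\mathrm{End}(\pi^*\mc{E}))$, combined with smoothness of $0$-resonant states (as in Dyatlov-Zworski \cite{Dyatlov-Zworski-16}), forces the generalized kernel to coincide with $\ker P$. A Grushin/Riesz-projector reduction applied to $s\mapsto P_{s\Gamma}$ produces a smoothly varying $(d+1)\times(d+1)$ matrix whose eigenvalues are the resonances clustering at $0$, with first variation at $s=0$ given by
\[
M'(\Gamma) = \Pi_0\,\mathrm{ad}(\Gamma_X)\,\Pi_0,
\]
where $\Pi_0$ is the $L^2$-orthogonal projector onto $\ker P$. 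Skew-Hermiticity of $\Gamma$ renders $M'(\Gamma)$ skew-Hermitian, and the identity $\mathrm{ad}(\Gamma_X)\mathbbm{1}_{\mc{E}}=0$ shows that $M'(\Gamma)$ preserves the orthogonal splitting $\ker P = \C\mathbbm{1}_{\mc{E}}\oplus V$. It thus suffices to exhibit a single $\Gamma$ for which $M'(\Gamma)|_V$ is invertible: then every resonance born from $V$ moves into $i\R\setminus\{0\}$ at first order in $s$, and any sufficiently small $s>0$ yields an opaque connection.

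The main obstacle is to show that the map $\Phi:\Gamma\mapsto M'(\Gamma)|_V$ reaches an invertible skew-Hermitian endomorphism of $V$. Unpacking,
\[
\langle \Phi(\Gamma) u, w\rangle_{L^2(SM)} = \int_{SM}\Tr\bigl([\Gamma_X(x,v),\, u(x,v)]\, w(x,v)^*\bigr)\, \dd\mu,
\]
and expanding $u,w\in V$ in twisted spherical harmonics in the fibre variable reduces this to a pairing of $\Gamma$ against an explicit smooth section of $T^*M\otimes\mathrm{End}_{\mathrm{sk}}(\mc{E})$ built from $u$ and $w$. If $\Phi$ never reached an invertible element, a standard linear-algebraic argument would produce a nonzero $u\in V$ common to the kernels of all $\Phi(\Gamma)$; the vanishing of the above pairing for every admissible $\Gamma$ would then force $u$ to be a scalar multiple of $\mathbbm{1}_{\mc{E}}$, via a unique-continuation principle exploiting the density of generic $X$-orbits on $SM$, contradicting $u\in V$. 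Establishing this unique continuation rigorously for smooth $0$-resonant states of a skew-adjoint transport operator along an Anosov flow is the technical heart of the argument and dictates the value of $k_0$.
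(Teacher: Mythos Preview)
Your openness argument and the reformulation via Lemma~\ref{lemma:characterization-opaque} are fine and match the paper. The density argument, however, has a genuine gap: the first-order variation $M'(\Gamma)=\Pi_0\,\mathrm{ad}(\Gamma_X)\,\Pi_0$ can vanish \emph{identically} on $V$, so no choice of $\Gamma$ makes $M'(\Gamma)|_V$ invertible. Concretely, decompose $\ker P$ into odd and even parts under $v\mapsto -v$ (this is legitimate since $\X$ preserves parity). For $u_i,u_j$ of parities $k_i,k_j\in\{0,1\}$, the change of variable $v\mapsto -v$ gives
\[
\langle \mathrm{ad}(\Gamma_X)u_i,u_j\rangle_{L^2}
=(-1)^{1+k_i+k_j}\,\langle \mathrm{ad}(\Gamma_X)u_i,u_j\rangle_{L^2},
\]
because $\Gamma_X$ is odd. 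Hence all entries with $k_i=k_j$ vanish; in particular every diagonal entry is zero (this is exactly the computation in the paper's Lemma~\ref{lemma:perturbation-dx-1}). If all nontrivial resonant states share one parity --- which is the most natural situation, e.g.\ when the extra resonant states are the degree-$0$ orthogonal projections $\Pi_{\mc{F}}$ onto invariant subbundles --- then $M'(\Gamma)|_V=0$ for every $\Gamma$, and your scheme produces no movement of resonances at first order. The ``standard linear-algebraic argument'' you invoke is therefore beside the point: the obstruction is not that the image of $\Phi$ misses the invertibles, but that $\Phi\equiv 0$ on $V$ in these cases.

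The paper circumvents this by going to second order. One tracks the sum $\lambda_s$ of resonances inside a small circle and shows (Lemma~\ref{lemma:perturbation-dx-2}) that
\[
\Re(\ddot{\lambda}_0)=-\sum_i \langle \Pi P_A u_i, P_A u_i\rangle_{L^2},
\]
where $\Pi=\RR_0^++\RR_0^-$ is the nonnegative ``scattering'' operator of \eqref{equation:pi}. It then suffices to find a single $A$ with $\langle \Pi P_A u, P_A u\rangle_{L^2}>0$ for one trace-free $u\in\ker\X$. This is done in Lemma~\ref{lemma:perturbation} by a microlocal argument: one builds a pseudodifferential operator $Q_{m_0}(A)={\pi_{m_0}}_*\Pi[\pi_1^*A,u]$ of order $-1$ and computes its principal symbol explicitly as an integral over an $(n-2)$-sphere in the fibre, then chooses $A$ and a test tensor so that the pairing is strictly positive. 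No unique-continuation principle for resonant states is needed; the positivity comes from the symbol calculus for $\Pi$ (as in \cite{Guillarmou-17-1,Gouezel-Lefeuvre-19}). If you want to repair your approach, you must replace the first-order Grushin step by this second-order analysis.
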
 

Let us make some important remarks. In the particular case where $(M,g)$ has negative curvature, Theorem \ref{theorem:generic-opacity} is a straightforward consequence of Theorem \ref{theorem:generic-ckts-endomorphism}. Indeed, by Theorem \ref{theorem:generic-ckts-endomorphism}, a connection has generically no nontrivial CKTs for $\nabla^{\mathrm{End}(\E)}$. Moreover, it is known that in negative curvature the equation $(\pi^* \nabla^{\mathrm{End}(\E)})_X u = 0$ (it is called a \emph{twisted cohomological equation}) implies that $u$ has finite Fourier content (see \cite[Theorem 4.1]{Guillarmou-Paternain-Salo-Uhlmann-16}) and the absence of non-trivial CKTs forces $u$ to be equal to $c \mathbbm{1}_{\E}$ for some $c \in \C$ (see \cite[Theorem 5.1]{Guillarmou-Paternain-Salo-Uhlmann-16}). In the general case of an Anosov manifold, it is conjectured that the same should happen, namely a solution to $(\pi^* \nabla^{\mathrm{End}(\E)})_X u = 0$ should have finite Fourier content, but this is still out of reach of the existing techniques.

In Theorem \ref{theorem:generic-opacity}, the minimal regularity $k_0 \geq 0$ needs to be increased because the arguments rely on microlocal analysis, and it is not clear whether the tools can work in low regularity. Using the fact that Pollicott-Ruelle resonances depend continuously on the operator (see \cite{Bonthonneau-19} for instance), one obtains that the set of opaque connections is open. It is therefore sufficient to prove that it is dense. \\

\noindent \textbf{Acknowledgement:} M.C. and T.L. have received funding from the European Research Council (ERC) under the European Union’s Horizon 2020 research and innovation programme (grant agreement No. 725967).

\section{Algebraic preliminaries}

\subsection{Symmetric tensors: definitions and properties}

\label{section:tensors}

We recall some elementary properties of symmetric tensors on Riemannian manifolds. The reader is referred to \cite{Dairbekov-Sharafutdinov-10} for an extensive discussion.

\subsubsection{Symmetric tensors in Euclidean space}

We consider a $n$-dimensional Euclidean vector space $(E,g_E)$ with orthonormal frame $(\e_1, ..., \e_n)$. We denote by $\otimes^m E^*$ the $m$-th tensor power of $E^*$ and by $\otimes^m_S E^*$ the symmetric tensors of order $m$, namely the tensors $u \in \otimes^m E^*$ satisfying:
\[
u(v_1,...,v_m) = u(v_{\sigma(1)},...,v_{\sigma(m)}),
\]
for all $v_1, ...,v_m \in E$ and $\sigma \in \mathfrak{S}_m$, the permutation group of $\left\{1, ..., m \right\}$. If $K = (k_1, ..., k_m) \in \left\{1, ...,n\right\}^m$, we define $\e_K^* = \e_{k_1}^* \otimes ... \otimes \e_{k_m}^*$, where $\e_i^*(\e_j) := \delta_{ij}$. We introduce the symmetrization operator $\mc{S} : \otimes^m E^* \rightarrow \otimes^m_S E^*$ defined by:
\[
\mc{S}(\eta_1 \otimes ... \otimes \eta_m) := \dfrac{1}{m!} \sum_{\sigma \in \mathfrak{S}_m} \eta_{\sigma(1)} \otimes ... \otimes \eta_{\sigma(m)},
\]
where $\eta_1, ...,\eta_m \in E^*$. Given $v \in E$, we define $v^\flat \in E^*$ by $v^\flat(w) := g_E(v,w)$ and call $\flat : E \rightarrow E^*$ the musical isomorphism, following the usual terminology. Its inverse is denoted by $\sharp : E^* \rightarrow E$. The scalar product $g_E$ naturally extends to $\otimes^m E^*$ (and thus to $\otimes^m_S E^*$) using the following formula:
\[
g_{\otimes^m E^*}(v_1^\flat \otimes ... \otimes v_m^\flat, w_1^\flat \otimes ... \otimes w_m^\flat) := \prod_{j=1}^m g_E(v_j,w_j),
\] 
where $v_i,w_i \in E$. In particular, if $u = \sum_{i_1, ..., i_m=1}^n u_{i_1 ... i_m} \e_{i_1}^* \otimes ... \otimes \e_{i_m}^*$, then $\|u\|_{\otimes^m E^*}^2 = \sum_{i_1,...,i_m=1}^n |u_{i_1 ... i_m}|^2$. For the sake of simplicity, we will still write $g_E$ instead of $g_{\otimes^m E^*}$. The operator $\mc{S}$ is an orthogonal projection with respect to this scalar product. 

There is a natural trace operator $\mc{T} : \otimes^m E^* \rightarrow \otimes^{m-2} E^*$ (it is formally defined to be $0$ for $m=0,1$) given by:
\begin{equation}
\label{equation:trace-tensor}
\mc{T}u := \sum_{i=1}^n u(\e_i,\e_i, \cdot, ...,\cdot),
\end{equation}
and it also maps $\mc{T} : \otimes^m_S E^* \rightarrow \otimes^{m-2}_S E^*$. Its adjoint (with respect to the metric $g_{\otimes^m E^*}$) on symmetric tensors is the map $\mc{J} : \otimes^m_S E^* \rightarrow \otimes^{m+2}_S E^*$ given by $\mc{J}u := \mc{S}(g_E \otimes u)$. It is easy to check that the map $\mc{J}$ is injective. This implies by standard linear algebra that one has the decomposition, where $\otimes_S^m E^*|_{0-\Tr} = \ker \mc{T} \cap \otimes_S^m E^*$ denotes the trace-free symmetric $m$-tensors:
\begin{equation}\label{eq:symetrictensorsplitting}
\otimes^m_S E^* = \otimes^m_S E^*|_{0-\Tr} \oplus^\bot \mc{J} \otimes^{m-2}_S E^* = \oplus_{k \geq 0} \mc{J}^k\otimes^{m-2k}_S E^*|_{0-\Tr}.
\end{equation}

Given $K = (k_1,...,k_m) \in \left\{1,...,n\right\}^m$, we introduce $\Theta(K) := (\theta_1(K),...,\theta_n(K))$, where $\theta_i(K) = \sharp\left\{k_j = i ~|~j =1,...,m\right\}$. Observe that $\mc{S} \e_K^* = \mc{S} \e_{K'}^*$ if and only if $\Theta(K)=\Theta(K')$ and $g_E(\mc{S} \e_K^*, \mc{S} \e_{K'}^*) = 0$, if $\Theta(K) \neq \Theta(K')$. In other words, there exists a subset $\mc{A} \subset \left\{1,...,n\right\}^m$ such that $\{\mc{S}\e_K ~|~ K \in \mc{A}\}$ forms an orthogonal family (not orthonormal though since the elements are not unitary) for the scalar product $g_E$: this subset is chosen of maximal size and so that if $K,K' \in \mc{A}$ with $K \neq K'$, then $\Theta(K)\neq \Theta(K')$. 

\subsubsection{Homogeneous polynomials}

There is a natural identification of $\otimes^m_S E^*$ with the vector space $\mathbf{P}_m(E)$ of \emph{homogeneous polynomials on $E$}, namely polynomials of the form
\[
f(v_1,...,v_m) := \sum_{|\alpha|=m} c_\alpha v_1^{\alpha_1} ... v_n^{\alpha_n}
\]
by considering the isomorphism, for $u \in \otimes_S^m E^*$
\[
\mathbf{P}_m(E) \ni \lambda_m u : v \mapsto u(v,...,v).
\]
Note that $\lambda_m = \lambda_m \mc{S}$, i.e. $\lambda_m$ vanishes on the orthogonal of symmetric tensors (with respect to the metric $g_{\otimes^m E^*}$). In particular, given $K = (k_1,...,k_m) \in \left\{1,...,n\right\}^m$, we have
\[
\lambda_m \left( \mc{S} \e_K^* \right) = \lambda_m \e_K^* = \prod_{j=1}^m v_{k_j}.
\]
The complex/real dimension of $\mathbf{P}_m(E)$ is $p(n,m) := {n+m-1\choose m}$. We denote by $\mathbf{H}_m(E)$ the subspace of harmonic homogeneous polynomials, namely the polynomials $u \in \mathbf{P}_m(E)$ which satisfy the extra condition that $\Delta_E u = 0$, where the Laplacian is computed with respect to the metric $g_E$. On $\mathbf{P}_m(E)$, we introduce the operator
\[
\partial \left(\sum_{|\alpha|=m} c_\alpha v_1^{\alpha_1} ... v_n^{\alpha_n}\right) := \sum_{|\alpha|=m} c_\alpha \partial_{v_1}^{\alpha_1} ... \partial_{v_n}^{\alpha_n},
\]
and we define the scalar product
\[
\langle P, Q \rangle := \partial(P)\overline{Q} \in \C.
\]
Note that if $P = \sum_{|\alpha|=m} a_\alpha v^\alpha, Q = \sum_{|\alpha|=m} b_\alpha v^\alpha$, then 
\[
\langle P,Q\rangle = \sum_{|\alpha|=m} \alpha ! a_\alpha \overline{b}_\alpha.
\]


The operator $\partial$ also satisfies the relation $\partial(PQ) = \partial(P) \partial(Q)=\partial(Q)\partial(P)$. This implies the following:
\begin{lemma}
Let $R \in \mathbf{P}_k(E)$ and let $\Phi : \mathbf{P}_m(E) \rightarrow \mathbf{P}_{m+k}(E)$ be the map defined by $\Phi(Q) = RQ$. Then, $\Phi^* : \mathbf{P}_{m+k}(E) \rightarrow  \mathbf{P}_m(E)$ is given by $\Phi^*(P) = \partial(\overline{R})P$.
\end{lemma}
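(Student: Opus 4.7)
The plan is to unravel both sides of $\langle \Phi(Q), P\rangle = \langle Q, \Phi^*(P)\rangle$ directly from the definition $\langle P',Q'\rangle := \partial(P')\overline{Q'}$ and then rely on the multiplicativity property $\partial(AB) = \partial(A)\partial(B) = \partial(B)\partial(A)$ already recorded just above the lemma. Since these two identities completely determine the inner product up to routine rearrangement, no deeper structural input is needed.

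First I would fix $P \in \mathbf{P}_{m+k}(E)$ and $Q \in \mathbf{P}_m(E)$ and compute, using the definition of $\Phi$ together with the multiplicativity of $\partial$,
\[
\langle \Phi(Q), P\rangle = \partial(RQ)\overline{P} = \partial(Q)\bigl(\partial(R)\overline{P}\bigr).
\]
Here the bracketing on the right exploits the fact that $\partial(R)$ and $\partial(Q)$ commute as constant-coefficient differential operators, and the factor $\partial(R)\overline{P}$ belongs to $\mathbf{P}_m(E)$ because $\deg \overline{P} - \deg R = m$.

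Next I would rewrite $\partial(R)\overline{P}$ as a complex conjugate of something involving $\overline{R}$. The key observation is that the partial derivatives $\partial_{v_i}$ are real operators, so for any polynomial $S$ one has $\overline{\partial(S)\,T} = \partial(\overline{S})\,\overline{T}$; applying this with $S=\overline{R}$ and $T=P$ gives $\partial(R)\overline{P} = \overline{\partial(\overline{R})P}$. Substituting this into the previous display yields
\[
\langle \Phi(Q), P\rangle = \partial(Q)\,\overline{\partial(\overline{R})P} = \bigl\langle Q,\ \partial(\overline{R})P\bigr\rangle,
\]
which is exactly the desired identity. The right-hand side is well-defined because $\partial(\overline{R})$ lowers the polynomial degree by $k$, landing us in $\mathbf{P}_m(E)$ as required.

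There is essentially no obstacle: the only mild subtlety is keeping track of complex conjugates through the sesquilinear pairing, and even this is dispatched by the real-coefficient nature of $\partial$. One could alternatively verify the formula on the monomial basis $v^\alpha$ using the explicit expression $\langle P, Q\rangle = \sum_\alpha \alpha!\, a_\alpha \overline{b}_\alpha$, but the computation above is coordinate-free and shorter.
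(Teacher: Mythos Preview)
Your proof is correct and follows essentially the same route as the paper's: both compute $\langle \Phi(Q),P\rangle = \partial(RQ)\overline{P}$, use multiplicativity of $\partial$, and identify $\partial(R)\overline{P}$ with $\overline{\partial(\overline{R})P}$ to arrive at $\langle Q,\partial(\overline{R})P\rangle$. You simply spell out the complex-conjugation step more explicitly than the paper does.
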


\begin{proof}
This is straightforward:
\[
\langle \Phi(Q), P \rangle = \langle RQ, P \rangle = \partial(RQ) \overline{P} = \partial(Q) \overline{\partial(\overline{R}) P} = \langle Q, \partial(\overline{R}) P \rangle.
\]
\end{proof}

We have $\partial(|v|^2) = \partial^2_{v_1} + ... + \partial^2_{v_n} = \Delta_E$ and thus:
\[
\langle |v|^2 P, Q \rangle = \langle P, \partial(|v|^2)Q \rangle = \langle P, \Delta_E Q \rangle,
\]
that is the adjoint of $\Delta_E$ with respect to $\langle \cdot, \cdot \rangle$ is $|v|^2$. The map $|v|^2 : \mathbf{P}_m(E) \rightarrow \mathbf{P}_{m+2}(E)$ is clearly injective (and thus $\Delta_E : \mathbf{P}_{m+2}(E) \rightarrow  \mathbf{P}_{m}(E)$ is surjective) and we thus have the decomposition:
\[
\mathbf{P}_m(E) = \mathbf{H}_m(E) \oplus^\bot |v|^2  \mathbf{P}_{m-2}(E) = \oplus_{k \geq 0} |v|^{2k} \mathbf{H}_{m-2k}(E),
\]
where $\mathbf{H}_{m}(E)=\left\{0\right\}$ for $m < 0$. This implies that:
\begin{equation}\label{eq:sphericalharmonicdim}
\mathrm{dim}(\mathbf{H}_m(E)) := h(n,m) = {n+m-1\choose m} - {n+m-3\choose m-2}.
\end{equation}
Moreover, we have:

\begin{lemma}
We have $m (m-1) \lambda_{m-2} \mc{T} = \Delta_E \lambda_m$ and $\lambda_m \mc{J} = |v|^2 \lambda_{m-2}$. As a consequence $\lambda_m : \otimes^m_S E^*|_{0-\Tr} \rightarrow \mathbf{H}_m(E)$ and $\lambda_m : \mc{J}\otimes^{m-2}_S E^*|_{0-\Tr} \rightarrow |v|^2\mathbf{H}_{m-2}(E)$ are both isomorphisms. Moreover, there exists a constant $c_m > 0$ such that if $f \in \otimes^m_S E^*|_{0-\Tr}$, then $\|\lambda_m f\|_{\mathbf{H}_m(E)} = c_m \|f\|_{\otimes^m_S E^*|_{0-\Tr}}$ i.e. $\lambda_m : \otimes^m_S E^*|_{0-\Tr} \rightarrow \mathbf{H}_m(E)$ is (up to a constant factor) an isometry.
\end{lemma}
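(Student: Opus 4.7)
The plan is to establish the two operator identities by direct unpacking of definitions, and then to deduce the isomorphism and isometry claims from them.

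\textbf{Step 1: The two identities.} For $\lambda_m\mc{J} = |v|^2\lambda_{m-2}$, I will use that evaluating a tensor at $(v,\ldots,v)$ is invariant under symmetrization, so $\lambda_m(\mc Ju)(v) = \mc{S}(g_E\otimes u)(v,\ldots,v) = (g_E\otimes u)(v,\ldots,v) = g_E(v,v)\,u(v,\ldots,v) = |v|^2\lambda_{m-2}u(v)$. For $m(m-1)\lambda_{m-2}\mc T = \Delta_E \lambda_m$, I will differentiate $\lambda_m u(v) = u(v,\ldots,v)$ twice using the $m$-linearity and the symmetry of $u$: this gives $\partial_{v_j}\lambda_m u(v) = m\,u(\e_j,v,\ldots,v)$ and then $\partial_{v_i}^2\lambda_m u(v) = m(m-1)\,u(\e_i,\e_i,v,\ldots,v)$. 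Summing over $i$ and recognizing the definition of $\mc T$ from \eqref{equation:trace-tensor} produces the identity.

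\textbf{Step 2: The isomorphism statements.} The map $\lambda_m : \otimes^m_S E^* \to \mathbf{P}_m(E)$ is injective by polarization of symmetric tensors, and both spaces have the same dimension $p(n,m)$, so it is a global isomorphism. By the first identity of the lemma, any trace-free $u$ satisfies $\Delta_E\lambda_m u = 0$, so $\lambda_m$ sends $\otimes^m_S E^*|_{0-\Tr}$ into $\mathbf{H}_m(E)$; iterating the second identity shows more generally that $\lambda_m$ maps $\mc J^k\otimes^{m-2k}_S E^*|_{0-\Tr}$ into $|v|^{2k}\mathbf{H}_{m-2k}(E)$. Matching the decomposition \eqref{eq:symetrictensorsplitting} with the analogous decomposition of $\mathbf P_m(E)$ summand by summand — the $k$-th summands on both sides have dimension $h(n,m-2k)$ — and using the global injectivity, each restriction must be a bijection.

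\textbf{Step 3: Isometry up to a constant.} The orthogonal group $O(E,g_E)$ acts naturally on $E$ and extends to actions on $\otimes^m_S E^*|_{0-\Tr}$ and $\mathbf H_m(E)$; the inner products on both sides are $O(E)$-invariant, and the restriction $\lambda_m|_{0-\Tr}$ is equivariant since it is the natural evaluation map. I will invoke the classical fact that both of these are irreducible representations of $O(E)$, so by Schur's lemma the pullback of the target inner product along $\lambda_m|_{0-\Tr}$ is a positive scalar multiple $c_m^2$ of the source inner product, providing the claimed constant. The main (and really only) non-routine input is this irreducibility assertion; everything else is direct calculation or dimension counting.
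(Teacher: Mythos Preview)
Your proof is correct and follows essentially the same approach as the paper: direct computation for the two operator identities (you differentiate $u(v,\ldots,v)$ whereas the paper expands on basis tensors, but this is the same argument), and Schur's lemma via the $O(n)$-action for the isometry statement. Your treatment of the isomorphism claims by dimension counting along the two decompositions is slightly more explicit than the paper's, which leaves that step implicit.
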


\begin{proof}
We have for $v \in E$ and $u \in \otimes_S^m E^*$
\[
\lambda_m \mc{J} u(v) = \lambda_m \mc{S} (g_E \otimes u) (v) = \lambda_m (g_E \otimes u) (v) = g_E(v,v)u(v,...,v) = |v|^2\lambda_{m-2} u (v).
\]
Then, we compute for any $m$-tuple $(i_1, \dotso, i_m)$
\begin{align*}
	m(m -1) \lambda_{m-2} \mathcal{T} \mathcal{S} \e_{i_1}^* \otimes ... \otimes \e_{i_m}^* &= \frac{1}{(m-2)!} \sum_{\sigma \in \mathfrak{S}_m} \sum_{j=1}^n \delta_{j i_{\sigma(1)}} \delta_{j i_{\sigma(2)}} v_{i_{\sigma(3)}} \cdots v_{i_{\sigma(m)}}\\
	 &=\sum_{1 \leq k \neq l \leq m} \delta_{i_k i_l} v_{i_1} \cdots \widehat{v}_{i_k} \cdots \widehat{v}_{i_l} \cdots v_{i_m} \\
	 & = \Delta_E \lambda_m \mathcal{S} \e_{i_1}^* \otimes ... \otimes \e_{i_m}^*. 
\end{align*}
This proves the claims made in the first two sentences. The remaining claim follows from Schur's Lemma. Indeed, we have two natural unitary representations of $O(n)$ on $\otimes^m_S E^*$ and $\mathbf{P}_m(E)$ given by the action by pullback (the second action is obvious; for the first one, see the proof of Lemma \ref{lemma:uniform-x-}) and the operator $\lambda_m$ is an intertwining operator. Since $\mathbf{H}_m(E)$ is well-known to be an irreducible representation of $O(n)$, so is $\otimes^m_S E^*|_{0-\Tr}$ and by Schur's Lemma, $\lambda_m^* \lambda_m : \otimes^m_S E^*|_{0-\Tr} \rightarrow \otimes^m_S E^*|_{0-\Tr}$ is a (positive) multiple of the identity.
\end{proof}

\subsubsection{Spherical harmonics}

\label{sssection:spherical-harmonics}

We define the operator of restriction $r_m : \mathbf{P}_m(E) \rightarrow C^\infty(\Ss_E)$, where $\Ss_E$ denotes the unit sphere in $E$ by $r_m(u) := u|_{\Ss_E}$. It is well-known that the operator maps isomorphically $r_m : \mathbf{H}_m(E) \rightarrow \Omega_m$, where $\Omega_m := \ker(\Delta_{\Ss_E} + m(m+n-2))$ and $\Delta_{\Ss_E}$ denotes the Laplacian on the unit sphere of $E$, is an isomorphism. Indeed, this follows from the following formula (see \cite[Proposition 4.48]{Gallot-Hulin-Lafontaine-04} for instance):
\[
\Delta_E(u)|_{\Ss_E} = \Delta_{\Ss_E}(u|_{\Ss_E}) + \left. \dfrac{\partial^2 u}{\partial r^2} \right|_{\Ss_E} + (n-1)\left. \dfrac{\partial u}{\partial r} \right|_{\Ss_E},
\]
where $r$ is the radial coordinate, and using the homogeneity of $u$. We endow $L^2(\Ss_E)$ with the canonical $L^2$ scalar product of functions induced by the round metric, namely
\[
\langle u_1,u_2 \rangle_{L^2(\Ss_E)} := \int_{\Ss_E} u_1(v) \overline{u_2(v)} \dd \vol_{g_{\Ss_E}}(v).
\]
One can prove that up to a constant $c_m' > 0$, $r_m : \mathbf{H}_m(E) \rightarrow \Omega_m$ is an isometry when $\Omega_m$ inherits this metric. We introduce $\pi_m^* := r_m \lambda_m$ and we thus have
\[
\otimes^m_S E^* = \oplus_{k \geq 0} \mc{J}^k \otimes^{m-2k}_S E^*|_{0-\Tr} \rightarrow_{\lambda_m} \mathbf{P}_k(E) = \oplus_{k \geq 0} |v|^{2k} \mathbf{H}_{m-2k}(E) \rightarrow_{r_m} \oplus_{k \geq 0} \Omega_{m-2k}(E)
\]
are isomorphisms which act diagonally on these decompositions. Moreover, they act on each diagonal term (up to a constant factor) as isometries. For the sake of simplicity, we also introduce the notation:
\[
\mathbf{S}_m(E) := \oplus_{k \geq 0} \Omega_{m-2k}(E).
\]

\subsection{Multiplication by a connection $1$-form}

We now twist with a complex inner product space $\mc{E}$ of dimension $r$ and consider the tensor product $\otimes^m_S E^* \otimes \mc{E}$ which consists of elements
\[
f = \sum_{k=1} u_k \otimes e_k,
\]
where $u_k \in \otimes^m_S E^*$ and $(e_1,...,e_r)$ forms an orthonormal basis of $\mc{E}$. Using the map $\lambda_m$ (resp. $\pi_m^*$), we will also identify $\otimes^m_S E^* \otimes \mc{E}$ with $\mathbf{P}_m(E) \otimes \mc{E}$ (resp. $\mathbf{S}_m(E) \otimes \mc{E}$). If $\Gamma \in E^* \otimes \mathrm{End}(\mc{E})$, and $f = \sum_{k=1}^r u_k \otimes e_k \in \mathbf{P}_m(E) \otimes \mc{E}$, we can define $\Gamma f \in  \mathbf{P}_{m+1}(E) \otimes \mc{E}$ by:
\[
\Gamma f (v) = \sum_{k=1} u_k(v) \otimes \Gamma(v)e_k.
\]
The following lemma is standard but we still provide a proof for the sake of completeness:

\begin{lemma}
\label{lemma:multiplication-1-form}
Let $\Gamma \in E^* \otimes \mathrm{End}(\mc{E})$. Then we have the mapping
\[
\mathbf{H}_m(E) \otimes \mc{E} \ni f \mapsto \Gamma f \in (\mathbf{H}_{m+1}(E) \otimes \mc{E}) \oplus (|v|^2 \mathbf{H}_{m-1}(E) \otimes \mc{E}).
\]
\end{lemma}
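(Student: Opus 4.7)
The plan is to show that $\Delta_E(\Gamma f)$ is harmonic of degree $m-1$, and then invoke uniqueness of the orthogonal decomposition
\[
\mathbf{P}_{m+1}(E) \otimes \mc{E} = \bigoplus_{k \geq 0} |v|^{2k}\mathbf{H}_{m+1-2k}(E) \otimes \mc{E}
\]
to conclude that only the $k=0$ and $k=1$ summands of $\Gamma f$ survive.

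First, I would write $\Gamma = \sum_i \e_i^* \otimes A_i$ with $A_i \in \End(\mc{E})$ so that $\Gamma(v) = \sum_i v_i A_i$, and $f = \sum_k u_k \otimes e_k$ with $u_k \in \mathbf{H}_m(E)$ and $(e_k)$ an orthonormal basis of $\mc{E}$. Then
\[
\Gamma f = \sum_{i,k} v_i u_k \otimes A_i e_k.
\]
Since $\Delta_E v_i = 0$ and $\Delta_E u_k = 0$, the Leibniz rule gives $\Delta_E(v_i u_k) = 2\,\partial_{v_i} u_k$. Because $\partial_{v_i}$ commutes with $\Delta_E$, the right-hand side is harmonic of degree $m-1$, so $\Delta_E(\Gamma f) \in \mathbf{H}_{m-1}(E) \otimes \mc{E}$.

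Second, I would decompose $\Gamma f = \sum_{k \geq 0} |v|^{2k} h_k$ with $h_k \in \mathbf{H}_{m+1-2k}(E) \otimes \mc{E}$. A straightforward calculation using Euler's identity $v \cdot \nabla h_k = (m+1-2k) h_k$ together with $\Delta_E(|v|^{2k}) = 2k(2k+n-2)|v|^{2k-2}$ and the Leibniz rule yields
\[
\Delta_E\!\left(|v|^{2k} h_k\right) = 2k(n+2m-2k)\,|v|^{2(k-1)} h_k,
\]
so that
\[
\Delta_E(\Gamma f) = \sum_{k \geq 1} 2k(n+2m-2k)\,|v|^{2(k-1)} h_k.
\]
Comparing with the previous step, since $\Delta_E(\Gamma f)$ lies in the single summand $\mathbf{H}_{m-1}(E) \otimes \mc{E}$ (the $k=1$ piece), uniqueness of the decomposition forces $2k(n+2m-2k)h_k = 0$ for all $k \geq 2$.

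Finally, I would verify that the scalar factors are nonzero in the relevant range: for $k \geq 2$ with $h_k$ possibly nonzero we have $2k \leq m+1$, hence $n+2m-2k \geq n+m-1 > 0$ since $n \geq 2$. Thus $h_k = 0$ for $k \geq 2$, and $\Gamma f = h_0 + |v|^2 h_1$ with $h_0 \in \mathbf{H}_{m+1}(E) \otimes \mc{E}$ and $h_1 \in \mathbf{H}_{m-1}(E) \otimes \mc{E}$, as claimed. The only delicate point is the arithmetic check of positivity of these coefficients, but it amounts to a one-line estimate.
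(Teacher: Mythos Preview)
Your argument is correct. The ingredients are the same as in the paper (Leibniz rule, Euler's identity, harmonicity of $\partial_{v_i} u_k$), but the organization differs: the paper writes down directly the candidate $b := (n+2(m-1))^{-1}\nabla\eta\cdot\nabla u \in \mathbf{H}_{m-1}(E)$ and checks by hand that $a := \eta u - |v|^2 b$ is harmonic, whereas you apply $\Delta_E$ once, observe the result is already harmonic of degree $m-1$, and then kill the higher pieces $h_k$ ($k\ge 2$) via uniqueness of the decomposition $\mathbf{P}_{m+1}=\oplus_k |v|^{2k}\mathbf{H}_{m+1-2k}$. Your route is slightly more systematic and would generalize cleanly to multiplication by higher-degree polynomials; the paper's route has the advantage of immediately exhibiting the explicit formula for the lower component $\Gamma_- u$, which is used in the very next lemma to show surjectivity of $\Gamma_-$. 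You could of course recover that formula from your computation as $h_1 = (2(n+2m-2))^{-1}\Delta_E(\Gamma f)$, and it would be worth recording this, since the explicit constant matters downstream.
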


\begin{proof}
We have
\[
\Gamma f (v) = \sum_{k=1} u_k(v) \otimes \Gamma(v)e_k = \sum_{k,j=1}^r \Gamma_{jk}(v) u_k(v) \otimes e_j,
\]
where $E \ni v \mapsto \Gamma_{jk}(v) \in \C$ is a linear form i.e. a homogeneous polynomial of degree $1$ (which is in particular harmonic). Thus, the lemma boils down to proving that if $u \in \mathbf{H}_m(E)$ and $\eta \in E^*$, then $v \mapsto \eta(v)u(v)$ is an element of $\mathbf{H}_{m+1}(E) \oplus |v|^2 \mathbf{H}_{m-1}(E)$. To see this, define $b:= (n + 2(m -1))^{-1} \nabla \eta \cdot \nabla u \in \mathbf{H}_{m - 1}(E)$, as $\nabla \eta$ is a constant vector so it commutes with $\Delta_E$. Next, we claim that $a := \eta \cdot u - |v|^2 b \in \mathbf{H}_{m + 1}(E)$, so we compute
\[
\begin{split}
\Delta_Ea &= \underbrace{\Delta_E(\eta)}_{=0} u + 2 \nabla \eta \cdot \nabla u + \eta \underbrace{\Delta_E u}_{=0} - \Delta_E(|v|^2) b - 2 \nabla(|v|^2) \cdot \nabla b - |v|^2 \underbrace{\Delta_E b}_{=0}\\
&= 2\nabla \eta \cdot \nabla u - 2nb - 4v \cdot \nabla b = 0,
\end{split}
\]
using Euler's formula since $b$ is $(m-1)$-homogeneous and the definition of $b$. This completes the proof.
\end{proof}

Following the previous lemma, we define $\Gamma_- : \mathbf{H}_m(E) \otimes \mc{E} \rightarrow \mathbf{H}_{m-1}(E) \otimes \mc{E}$ as the orthogonal projection onto the lower-order harmonic polynomials. First of all, we prove the following result, forgetting about the twist by $\mc{E}$ (equivalently $\mc{E} = \C$ in the next lemma).

\begin{lemma}\label{lemma:algebraic-surjectivity-trivial-bundle}
Let $\Gamma \in E^* \setminus \left\{0 \right\}$. Then $\Gamma_- : \mathbf{H}_m(E) \rightarrow \mathbf{H}_{m-1}(E)$ is surjective.
\end{lemma}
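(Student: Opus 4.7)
The plan is to reduce $\Gamma_-$ to a constant-coefficient directional derivative on harmonic polynomials, and then to deduce surjectivity from a dimension count based on \eqref{eq:sphericalharmonicdim}. First, I would revisit the proof of Lemma \ref{lemma:multiplication-1-form}: for $u \in \mathbf{H}_m(E)$, the decomposition $\Gamma \cdot u = a + |v|^2 b$ exhibited there has
\[
a \in \mathbf{H}_{m+1}(E), \qquad b = \frac{1}{n + 2(m-1)}\,\nabla \Gamma \cdot \nabla u \in \mathbf{H}_{m-1}(E).
\]
Since $\Gamma$ is linear in $v$, the gradient $\nabla \Gamma$ is the constant vector $\Gamma^{\sharp} \in E$ dual to $\Gamma$, so by definition of $\Gamma_-$ one obtains $\Gamma_- u = (n+2(m-1))^{-1}\,\partial_{\Gamma^\sharp} u$. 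Hence surjectivity of $\Gamma_-$ is equivalent to surjectivity of the directional derivative $\partial_{\Gamma^\sharp}: \mathbf{H}_m(E) \to \mathbf{H}_{m-1}(E)$.

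Next, I would use the fact that $\Delta_E$, and therefore every space $\mathbf{H}_k(E)$, is $O(n)$-invariant: choosing an orthogonal transformation sending $\Gamma^\sharp/|\Gamma^\sharp|$ to $\e_1$ and conjugating reduces the problem to the case $\Gamma^\sharp = \e_1$, i.e.\ to proving that $\partial_1: \mathbf{H}_m(E) \to \mathbf{H}_{m-1}(E)$ is surjective.

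Finally, surjectivity follows from a dimension count. The operator $\partial_1$ has constant coefficients and therefore commutes with $\Delta_E$, so it indeed maps into $\mathbf{H}_{m-1}(E)$. A polynomial in $\mathbf{H}_m(E)$ lies in the kernel of $\partial_1$ iff it is harmonic and independent of $v_1$, i.e.\ iff it is a harmonic polynomial of degree $m$ in the variables $(v_2,\dotsc,v_n)$; by \eqref{eq:sphericalharmonicdim} this kernel has dimension $h(n-1,m)$. Combining this with the combinatorial identity
\[
h(n,m) = h(n-1,m) + h(n,m-1),
\]
which is immediate from \eqref{eq:sphericalharmonicdim} and two applications of Pascal's rule, the rank-nullity theorem gives
\[
\dim \mathrm{range}\bigl(\partial_1|_{\mathbf{H}_m(E)}\bigr) = h(n,m) - h(n-1,m) = h(n,m-1) = \dim \mathbf{H}_{m-1}(E),
\]
and since the range is contained in $\mathbf{H}_{m-1}(E)$, equality of dimensions forces surjectivity. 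The main conceptual input is the first step — the identification of $\Gamma_-$ with a directional derivative — but this is essentially a rereading of the proof of Lemma \ref{lemma:multiplication-1-form}; the remaining steps are routine and the only mild obstacle, the combinatorial identity, is elementary.
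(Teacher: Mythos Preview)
Your proposal is correct and follows essentially the same approach as the paper: identify $\Gamma_-$ with a constant multiple of a directional derivative (via the formula from Lemma~\ref{lemma:multiplication-1-form}), rotate to reduce to $\partial_1$, identify the kernel with $\mathbf{H}_m(\R^{n-1})$, and conclude by the dimension count $h(n,m)-h(n-1,m)=h(n,m-1)$ obtained from Pascal's rule. The only cosmetic difference is that the paper performs the rotation first and then reads off the derivative, whereas you do these in the opposite order.
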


\begin{proof}
Up to a preliminary change of coordinates (a rotation), we can always assume that $\Gamma = \mu \e_1^*$ with $\mu \neq 0$. By the previous Lemma, we then have
\[
\Gamma_- u (v) = \mu (n+2(m-1))^{-1} \partial_{v_1}u(v).
\]
Let us compute the dimension of the kernel of $\Gamma_-$. We have $\Gamma_- u =0$ if and only if $\partial_{v_1}u = 0$ i.e. $u$ is independent of $v_1$. Since $u$ is a homogeneous polynomial, this means that $u = \sum_{|\alpha|=m,\alpha_1=0} c_\alpha v^\alpha$. Moreover, since $u$ is harmonic, this also means that $\Delta u = 0 = \Delta' u$, where $\Delta' = \partial^2_{v_2} + ... + \partial^2_{v_n}$ and thus $u$ is harmonic polynomial of degree $m$ in $\mathbf{H}_m(\R^{n-1})$. The other inclusion being obvious, we thus have $\ker \Gamma_- \simeq \mathbf{H}_m(\R^{n-1})$. Thus:
\[
\mathrm{dim}(\ker \Gamma_-) = h(n-1,m) = {n-2+m\choose m} - {n-4+m\choose m-2}.
\]
As a consequence, using the Pascal's rule for binomial coefficients
\[
\begin{split}
\mathrm{dim}(\ran \Gamma_-) & = \mathrm{dim}(\mathbf{H}_m(\R^{n})) - \mathrm{dim}(\ker \Gamma_-) \\
& = {n-1+m \choose m} - {n-3+m \choose m-2} - \left( {n-2+m\choose m} - {n-4+m\choose m-2} \right) \\
& = {n-2+m \choose m-1} - {n-4+m \choose m-3} = \mathrm{dim}(\mathbf{H}_{m-1}(\R^n)),
\end{split}
\]
thus $\Gamma_-$ is surjective.
\end{proof}

Note that, using the restriction map $r_m : \mathbf{H}_m(E) \rightarrow \Omega_m(E)$, the last lemma is equivalent to saying that $\Gamma_- : \Omega_m \rightarrow \Omega_{m-1}$ is surjective. Eventually, we will need this last Lemma, where $\mathrm{End}_{\mathrm{sk}}(\mc{E})$ denotes the skew-Hermitian endomorphisms

\begin{lemma}
\label{lemma:algebraic-surjectivity}
Let $u \in \Omega_m \otimes \mc{E}$. Then, there exists $\Gamma \in \mathrm{End}_{\mathrm{sk}}(\mc{E})$ and $w \in \Omega_{m+1} \otimes \mc{E}$ such that $u = \Gamma_- w$.
\end{lemma}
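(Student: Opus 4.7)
My plan is to reduce componentwise to the untwisted scalar statement of Lemma \ref{lemma:algebraic-surjectivity-trivial-bundle}, by choosing $\Gamma$ to act on $\mc{E}$ in the simplest possible skew-Hermitian way, namely as a scalar multiple of the identity. (I interpret the statement of the lemma as asserting $\Gamma \in E^* \otimes \End_{\mathrm{sk}}(\mc{E})$, which is the natural domain of $\Gamma_-$ as set up in Lemma \ref{lemma:multiplication-1-form}; this is also what will be needed to produce skew-Hermitian perturbations of the connection.)

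Concretely, I would fix once and for all an arbitrary nonzero covector $\eta \in E^* \setminus \{0\}$ and set
\[
\Gamma := \eta \otimes (i\,\mathrm{Id}_{\mc{E}}) \in E^* \otimes \End_{\mathrm{sk}}(\mc{E}),
\]
noting that $(i\,\mathrm{Id}_{\mc{E}})^* = -i\,\mathrm{Id}_{\mc{E}}$, so the skew-Hermitian requirement is satisfied. Expanding $u = \sum_{k=1}^{r} u_k \otimes e_k$ in an orthonormal basis $(e_1,\dots,e_r)$ of $\mc{E}$ with $u_k \in \Omega_m$, I would apply Lemma \ref{lemma:algebraic-surjectivity-trivial-bundle} with $m$ replaced by $m+1$ to the scalar $1$-form $\eta$ to obtain, for each index $k$, an element $w_k \in \Omega_{m+1}$ solving the scalar equation $\eta_- w_k = -i\,u_k$. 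Setting $w := \sum_{k=1}^{r} w_k \otimes e_k \in \Omega_{m+1} \otimes \mc{E}$ then gives a natural candidate.

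It only remains to verify that $\Gamma_- w = u$. Using the formula of Lemma \ref{lemma:multiplication-1-form}, the endomorphism $\Gamma$ acts on $w$ by scalar multiplication by $i\eta$ on each component, so $(\Gamma w)_k = i\eta \cdot w_k$, and since the projection onto the lower harmonic summand commutes with the (constant) scalar $i$ and acts componentwise, one obtains
\[
(\Gamma_- w)_k = i\,\eta_- w_k = i \cdot (-i u_k) = u_k,
\]
as desired. There is no serious obstacle here: the only non-trivial input is the scalar surjectivity furnished by Lemma \ref{lemma:algebraic-surjectivity-trivial-bundle}, and the sole point worth noticing is that the reduction to scalars can be performed through a skew-Hermitian endomorphism by inserting the imaginary unit.
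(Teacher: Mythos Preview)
Your proof is correct and follows essentially the same approach as the paper: both reduce componentwise to the scalar surjectivity of Lemma~\ref{lemma:algebraic-surjectivity-trivial-bundle} by choosing a diagonal skew-Hermitian $\Gamma$. The paper allows distinct real $1$-forms $\Gamma_k$ on the diagonal while you take the slightly more economical choice $\Gamma_k = \eta$ for all $k$ (i.e.\ $\Gamma = i\eta \otimes \mathrm{Id}_{\mc{E}}$), but this is a cosmetic difference only.
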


\begin{proof}
We write $u = \sum_{k=1}^r u_k \otimes e_k$, where $u_k \in \Omega_m(E)$ are spherical harmonics (possibly complex). For each $k = 1,...,r$, we choose an arbitrary real-valued $\Gamma_k \in E^* \setminus \left\{ 0 \right\}$ and we define $\Gamma \in E^* \otimes \mathrm{End}_{\mathrm{sk}}(\mc{E})$ by, in the $(e_1, \dotso, e_r)$ basis:
\[
\Gamma(v) := \begin{pmatrix} i \Gamma_1(v) & 0 & \cdots & 0 \\ 
0 & i \Gamma_2(v) & 0 & \cdots \\
\vdots & & \ddots & \\
0& \cdots & 0 & i \Gamma_r(v)
\end{pmatrix},
\]
where $v \in E$. By the previous Lemma, for all $k = 1,...,r$ we can always find $w_k \in \Omega_{m+1}$ such that $i{\Gamma_k}_- w_k = u_k$. We then set $w := \sum_{k = 1}^r w_k \otimes e_k$, so $\Gamma_- w = u$.

\end{proof}

\subsection{Twisted tensor analysis on the manifold}

\label{ssection:twisted-tensors}

Given a section $u \in C^\infty(M,\otimes^m_S T^*M \otimes \mc{E})$, the connection $\nabla^{\mc{E}}$ produces an element $\nabla^{\mc{E}}u \in C^\infty(M, T^*M \otimes (\otimes^m_S T^*M) \otimes \mc{E})$. In coordinates, if $(e_1, ..., e_r)$ is a local orthonormal frame for $\mc{E}$ and $\nabla^{\mc{E}} = d + \Gamma$, for some one-form with values in skew-hermitian matrices $\Gamma$, we have:
\begin{equation}
\label{equation:nabla-e}
\begin{split}
\nabla^{\mc{E}}(\sum_{k=1}^r u_k \otimes e_k) & = \sum_{k=1}^r \nabla u_k \otimes e_k + u_k \otimes \nabla^{\mc{E}} e_k\\
& = \sum_{k=1}^r \left(\nabla u_k +   \sum_{l=1}^r \sum_{i=1}^n \Gamma_{il}^k u_l \otimes dx_i \right) \otimes e_k,
\end{split}
\end{equation}
where $u_k \in C^\infty(M,\otimes^m_S T^*M)$ and $\nabla$ is the Levi-Civita connection. The symmetrization operator $\mc{S}_{\mc{E}} : C^\infty(M,\otimes^m T^*M \otimes \mc{E}) \rightarrow C^\infty(M,\otimes^m_S T^*M \otimes \mc{E})$ is defined by:
\[
\mc{S}_{\mc{E}}\left(\sum_{k=1}^r u_k \otimes e_k\right) = \sum_{k=1}^r \mc{S}(u_k) \otimes e_k,
\]
where $u_k \in C^\infty(M,\otimes^m_S T^*M)$ and $\mc{S}$ is the symmetrization operators of tensors previously introduced. We can symmetrize \eqref{equation:nabla-e} to produce an element $D_{\mc{E}} := \mc{S}_{\mc{E}} \nabla^{\mc{E}}u \in C^\infty(M, \otimes^{m+1}_S T^*M \otimes \mc{E})$ given in coordinates by:
\begin{equation}
\label{equation:formula-de}
D_{\mc{E}} \left(\sum_{k=1}^r u_k \otimes e_k\right) = \sum_{k=1}^r \left( Du_k + \sum_{l=1}^r  \sum_{i=1}^n \Gamma_{il}^k \mc{S}(u_l \otimes dx_i) \right) \otimes e_k,
\end{equation}
where $D := \mc{S} \nabla$ ($\nabla$ being the Levi-Civita connection) is the usual symmetric derivative of symmetric tensors. Elements of the form $Du \in C^\infty(M,\otimes^{m+1}_S T^*M)$ are called \emph{potential tensors}. By comparison, we will call elements of the form $D_{\mc{E}}f \in C^\infty(M,\otimes^{m+1}_S T^*M \otimes \mc{E})$ \emph{twisted potential tensors}. The operator $D_{\mc{E}}$ is a first order differential operator and the expression of its principal symbol
\[
\sigma_{\mathrm{princ}}(D_\mc{E}) \in C^\infty(T^*M, \mathrm{Hom}(\otimes^{m}_S T^*M \otimes \mc{E}, \otimes^{m+1}_S T^*M \otimes \mc{E}))
\]
can be read off from \eqref{equation:formula-de}, namely $\sigma_{\mathrm{princ}}(D_{\E}) = \sigma_{\mathrm{princ}}(D) \otimes \Id_{\E}$:
\[
\begin{split}
\sigma_{\mathrm{princ}}(D_\mc{E})(x,\xi) \cdot \left(\sum_{k=1}^r u_k(x) \otimes e_k(x) \right) & = \sum_{k=1}^r \left(\sigma_{\mathrm{princ}}(D)(x,\xi) \cdot u_k(x)\right) \otimes e_k(x) \\
& = i \sum_{k=1}^r \mc{S}(\xi \otimes u_k(x)) \otimes e_k(x),
\end{split}
\]
where $e_k(x) \in \mc{E}_x, u_k(x) \in \otimes^m_S T^*_xM$ and the basis $(e_1(x),...,e_r(x))$ is assumed to be orthonormal. One can check that this is an injective map, which means that $D_{\mc{E}}$ acting on twisted symmetric tensors of order $m$ is a left-elliptic operator and can be inverted on the left modulo a smoothing remainder; its kernel is finite-dimensional and consists of elements called \emph{twisted Killing Tensors}.

In the particular case where $\mc{E} = \C$ (i.e. there is no twist), the elements in the kernel of $D$ are called \emph{Killing Tensors} (for $m=1$, they generate infinitesimal isometries). It is known that if the flow is ergodic, the kernel of $D$ is trivial in the sense that it is reduced to $\left\{0\right\}$ when $m$ is odd and $\C \cdot \mc{S}(g^{\otimes m/2})$ when $m$ is even. This simply follows from the well-known conjugation relation $\pi_{m+1}^*D = X \pi_m^*$. Moreover, it is known that the kernel of $D$ is generically trivial \cite{Kruglikov-Matveev-16} (with respect to the metric $g$). In the presence of a twist by a vector bundle $\mc{E}$, one can also analyse the kernel of $D_{\mc{E}}$: it is proved in \cite{Guillarmou-Paternain-Salo-Uhlmann-16} that on a negatively-curved manifold, if $(\mc{E},\nabla^{\mc{E}})$ has no CKTs, then the kernel of $D_{\mc{E}}$ is trivial (in the same sense as before) 
This also relies on the conjugation relation $\pi_{m+1}^* D_{\mc{E}} = \X \pi_m^*$.

The adjoint
\[
D^*_{\mc{E}} : C^\infty(M, \otimes^{m+1}_S T^*M \otimes \mc{E}) \rightarrow C^\infty(M, \otimes^{m}_S T^*M \otimes \mc{E})
\]
has a surjective principal symbol given by
\begin{equation}\label{eq:D*symbol}
\sigma_{D^*_{\mc{E}}}(x,\xi) \cdot \left(\sum_{k=1}^r u_k(x) \otimes e_k(x) \right) =- i \sum_{k=1}^r \imath_{\xi^\sharp} u_k(x) \otimes e_k(x).
\end{equation}
As we will see in the next section (see Definition \ref{definition:divergence-type}), such an operator is called of divergence type. Using the correspondence between trace-free twisted symmetric tensors of degree $m$ and twisted spherical harmonics of degree $m$, there is an explicit link between $\X_-/D^*_{\mc{E}}$ and $\X_+/D_{\mc{E}}$. More precisely, we introduce $\mc{P} : C^\infty(M,\otimes^m_S T^*M \otimes \mc{E}) \rightarrow C^\infty(M,\otimes^m_S T^*M|_{0-\Tr} \otimes \mc{E})$ the pointwise orthogonal projection on trace-free twisted symmetric tensors. We then have the following equalities (see \cite[p. 22]{Guillarmou-Paternain-Salo-Uhlmann-16}) on $C^\infty(M, \otimes^{m}_S T^*M|_{0-\Tr} \otimes \mc{E})$:
\begin{equation}
\label{equation:link-xd}
 \X_+ \pi_m^* = \pi_{m+1}^* \mc{P} D_{\mc{E}}, \quad \X_- \pi_{m}^* = - \dfrac{m}{n-2+2m} \pi_{m-1}^* D^*_{\mc{E}}.
\end{equation}
The kernel of $\X_+$ is therefore in one-to-one correspondance with the kernel of $\mc{P}  D_{\mc{E}}$. In particular, we have the mapping
\[
D^*_{\mc{E}} : C^\infty(M, \otimes^{m+1}_S T^*M|_{0-\Tr} \otimes \mc{E}) \rightarrow C^\infty(M, \otimes^{m}_S T^*M|_{0-\Tr} \otimes \mc{E}).
\]

\subsection{Trace free skew-hermitian endomorphisms}

The following lemma is a very elementary result of linear algebra: it states that every trace-free skew-Hermitian endomorphism can be obtained as the Lie bracket of two skew-Hermitian endomorphisms.

\begin{lemma}
\label{lemma:baby}
Let $u \in \mathrm{End}_{\mathrm{sk}}(\C^r)$ such that $\Tr(u) = 0$. Then, there exists $A,\Gamma \in \mathrm{End}_{\mathrm{sk}}(\C^r)$ such that $u = [A,\Gamma]$.
\end{lemma}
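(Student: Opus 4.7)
The plan is to reduce to the diagonal case via the spectral theorem, and then use a unitary with flat modulus (the discrete Fourier transform) to turn the diagonal trace-free matrix into a purely off-diagonal one, which is visibly a commutator against a regular diagonal skew-Hermitian anchor.

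Since $u$ is skew-Hermitian, the spectral theorem gives $u = W D W^*$ with $W$ unitary and $D = i\,\mathrm{diag}(\lambda_1,\dots,\lambda_r)$ for some real $\lambda_j$; the trace-free assumption forces $\sum_j \lambda_j = 0$. Unitary conjugation preserves the class of skew-Hermitian matrices and intertwines commutators, so it is enough to write $D$ itself as the commutator of two skew-Hermitian matrices.

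Next, consider the DFT matrix $V_{lk} := e^{2\pi i (l-1)(k-1)/r}/\sqrt{r}$. It is unitary and satisfies $|V_{lk}|^2 = 1/r$ for all $k,l$; hence
\[
(V^* D V)_{kk} = i\sum_{l=1}^r |V_{lk}|^2 \lambda_l = \frac{i}{r}\sum_{l=1}^r \lambda_l = 0,
\]
so $B := V^* D V$ is skew-Hermitian with vanishing diagonal. This is the one and only place where the trace-free hypothesis is used.

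Finally, pick any diagonal regular skew-Hermitian anchor, for instance $A_0 := i\,\mathrm{diag}(1, 2, \dots, r)$. The adjoint $[A_0,\cdot]$ sends a matrix with entries $\Gamma_{0,kl}$ to the one with entries $i(k-l)\Gamma_{0,kl}$, which is a linear isomorphism on matrices with vanishing diagonal; one can therefore simply set $\Gamma_{0,kl} := B_{kl}/(i(k-l))$ for $k \neq l$ and $0$ on the diagonal, and a one-line check using $\overline{B_{kl}} = -B_{lk}$ confirms that $\Gamma_0$ is again skew-Hermitian, with $[A_0, \Gamma_0] = B$ by construction. Putting $A := WV A_0 V^* W^*$ and $\Gamma := WV \Gamma_0 V^* W^*$ then gives $[A, \Gamma] = u$ with $A, \Gamma \in \mathrm{End}_{\mathrm{sk}}(\C^r)$ as unitary conjugates of skew-Hermitian matrices. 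I do not anticipate a serious obstacle in this argument; the only conceptual input is the DFT trick, whose role is precisely to convert the global trace-free condition into the pointwise vanishing of the diagonal.
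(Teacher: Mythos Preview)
Your argument is correct. The spectral reduction, the DFT step producing a zero-diagonal skew-Hermitian conjugate, and the explicit inversion of $[A_0,\cdot]$ on off-diagonal matrices all check out; the skew-Hermitian property of $\Gamma_0$ follows exactly as you wrote.

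This is a genuinely different route from the paper. The paper argues by induction on $r$: using that $x \mapsto \langle u(x),x\rangle$ is a continuous $i\R$-valued function on the connected sphere whose extremal values are the top and bottom eigenvalues of $u$, the trace-free hypothesis guarantees a unit vector $x_0$ with $\langle u(x_0),x_0\rangle = 0$. In an orthonormal basis starting with $x_0$ the matrix $u$ has a zero in the $(1,1)$ slot, and one then writes down block-triangular $A,\Gamma$ by hand, invoking the inductive hypothesis on the lower-right $(r-1)\times(r-1)$ block. Your proof instead is non-inductive and fully explicit: the DFT conjugation converts ``trace zero'' into ``zero diagonal'' in one stroke (a known trick, sometimes attributed to Fillmore), after which the commutator equation against a regular diagonal $A_0$ is trivially solvable entrywise. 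The paper's approach is perhaps more elementary in that it uses only the intermediate value theorem rather than the spectral theorem and a specific unitary, but yours yields an explicit closed formula for $A$ and $\Gamma$ and avoids the recursive bookkeeping.
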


Note that the same holds true if one removes the skew-Hermitian subscript, namely any trace-free matrix is a commutator.

\begin{proof}
The proof is based on an induction on the dimension $r \geq 0$. The statement is obviously true for $r=0,1$. Fix $r \geq 1$ and assume it is true for $r-1$. Consider the map
\[
F : \Ss^{r-1} \ni x \mapsto \langle u(x),x \rangle \in i\R,
\]
where $\Ss^{r-1} := \left\{x \in \C^r ~|~ \langle x, x \rangle = 1 \right\}$. Note that it maps to $i \R$ since $u$ is skew-hermitian. Since $\Ss^{r-1}$ is connected and $F$ is continuous, the image $F(\Ss^{r-1}) \subset i\R$ is an interval: its maximum/minimum is the highest/lowest eigenvalue of $u$ (modulo multiplication by $i$). Since $\Tr(u) = 0$, this interval contains $0$ so there exists $x_0 \in \Ss^{r-1}$ such that $\langle u(x_0),x_0 \rangle = 0$. We can therefore find an orthonormal basis of $\C^r$ so that in this basis,
\[
u = \begin{pmatrix}
0 & -X^* \\
X & u'
\end{pmatrix},
\]
where $X$ is an $(r-1)$-dimensional column and $X^* := -\overline{X}^{\top}$ is the conjugate transpose. Note that $u' \in \mathrm{End}_{\mathrm{sk}}(\C^{r-1})$, $\Tr (u') = 0$ and ${u'}^* = -u'$, thus $u' = [A',\Gamma']$ for some $A',\Gamma' \in \mathrm{End}_{\mathrm{sk}}(\C^{r-1})$. Let $\lambda \in \mathbb{R}$ be such that $A'-i \lambda$ is invertible and consider $S \in \C^{r-1}$ such that $(A'-i \lambda)S = X$. Consider
\[
A = \begin{pmatrix} i \lambda & 0 \\ 0 & A' \end{pmatrix},\,\, \Gamma = \begin{pmatrix} 0 & - S^* \\ S & \Gamma' \end{pmatrix},
\]
and observe that
\[
[A,\Gamma] = A\Gamma-\Gamma A = \begin{pmatrix} 0 &S^*(A'-i \lambda) \\ (A'-i\lambda)S & [A',\Gamma'] \end{pmatrix} = \begin{pmatrix} 0 & -X^* \\ X & u' \end{pmatrix} = u.
\]
This completes the proof.
\end{proof}

\section{Microlocal preliminaries}

\label{section:microlocal}

\subsection{An abstract result} In this section, we introduce the notion of \emph{operators of uniform divergence type.}

\subsubsection{Statement of the result}

Let $P : C^\infty(M,E) \rightarrow C^\infty(M,F)$ be a differential operator of order $m \geq 0$ between two vector bundles such that $\rk(F) > \rk(E)$ and let $\sigma_P \in C^\infty(T^*M, \mathrm{Hom}(E,F))$ be its principal symbol.

\begin{definition}
\label{definition:divergence-type}
We say that $P$ is of \emph{gradient type} (or equivalently that $P^*$ is of \emph{divergence type}) if $\sigma_P(x,\xi)$ is injective for all $(x,\xi) \in T^*M \setminus \left\{ 0 \right\}$ (equivalently, $\sigma_{P^*}(x,\xi)$ is surjective for all $(x,\xi) \in T^*M \setminus \left\{ 0 \right\}$).
\end{definition}

\begin{lemma}\label{lemma:infkernel}
Assume that $P$ is of gradient type. Then for all $s \in \R$, $\ker(P^*|_{H^s(M,F)})$ is infinite dimensional.
\end{lemma}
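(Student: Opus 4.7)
The plan is to translate the question about $\ker P^*$ into a question about $\coker P$ via the closed range theorem, and then exploit that $\rk F > \rk E$ prevents $P$ from being Fredholm. A direct construction of infinitely many null solutions is possible but heavier.

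First, because $P$ is of gradient type (injective symbol off the zero section), standard parametrix arguments produce a $\Psi$DO $Q$ of order $-m$ with $QP = \Id + R$, $R$ smoothing. Compactness of $M$ then gives, for every real $t$, that $P : H^{t+m}(M,E) \to H^t(M,F)$ has finite-dimensional kernel and closed range. Next, fix $s \in \R$ and apply this with $t := -s$: the bounded operator $P : H^{m-s}(M,E) \to H^{-s}(M,F)$ has closed range, and its Banach-space adjoint, under the $L^2$-pairing $H^s \times H^{-s} \to \C$, is exactly $P^* : H^s(M,F) \to H^{s-m}(M,E)$. The closed range theorem then identifies
\[
\ker\!\bigl(P^*|_{H^s(M,F)}\bigr) = \bigl(\ran(P|_{H^{m-s}(M,E)})\bigr)^\perp,
\]
so it suffices to show that the range of $P$ on this Sobolev scale has infinite codimension in $H^{-s}(M,F)$.

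Second, suppose for contradiction the codimension were finite. Then $P : H^{m-s}(M,E) \to H^{-s}(M,F)$ would be Fredholm. I would then invoke the classical theorem that a (pseudo)differential operator on a closed manifold is Fredholm between Sobolev spaces if and only if its principal symbol is bijective on $T^*M \setminus 0$ (Hörmander, \emph{The Analysis of Linear Partial Differential Operators}, Vol.~III, Ch.~19). But $\sigma_P(x,\xi) : E_x \to F_x$ cannot be surjective for any $(x,\xi)$ since $\rk F > \rk E$. This contradiction forces $\coker P$ to be infinite-dimensional, which by the previous paragraph concludes the proof.

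The main obstacle is the appeal to the Fredholm-equivalent-to-elliptic theorem: while classical, it is not entirely elementary. A more self-contained alternative is to exhibit an infinite linearly independent family in $\ker P^*$ directly: pick $(x_0,\xi_0) \in T^*M \setminus 0$ and $\eta_0 \in F_{x_0}$ orthogonal to $\ran \sigma_P(x_0,\xi_0)$ (such $\eta_0$ exists because $\rk F - \rk E \geq 1$), build WKB quasimodes $u_k$ concentrating microlocally at $(x_0,\xi_0)$ with frequency $k$ in the direction of $\eta_0$, observe that they are linearly independent in $H^s$ and satisfy $\|P^* u_k\|_{H^{s-m}} \to 0$, and correct them to exact null solutions by subtracting away the finite-dimensional obstruction supplied by a right-parametrix $P^* Q = \Id + S$ for $P^*$ (which exists since $\sigma_{P^*}$ is surjective). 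Either route gives the lemma, but the abstract Fredholm argument is conceptually the cleanest.
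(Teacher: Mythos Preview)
Your main argument is correct and takes a genuinely different route from the paper. You reduce the question to the non-Fredholmness of $P$ via the closed range theorem, and then invoke the classical equivalence ``Fredholm $\Leftrightarrow$ elliptic'' from H\"ormander to conclude, since $\rk F > \rk E$ prevents $\sigma_P$ from ever being bijective. This is clean and conceptually transparent, at the cost of black-boxing a nontrivial theorem.

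The paper instead argues directly and constructively. After the same parametrix/closed-range step, it writes down the $L^2$-orthogonal projector $\pi_{\ran(P)} = P\Delta^{-1}P^*$ (a $\Psi$DO of order $0$ whose symbol is the orthogonal projection onto $\ran\sigma_P(x,\xi)$) and then tests it against explicit oscillatory sections. First a Lagrangian state $e^{iS/h}f$ with $f(x_0)\in\ker\sigma_{\pi_{\ran(P)}}(x_0,\xi_0)$ shows $\ker P^*\neq\{0\}$; then Gaussian states centred at many disjoint points produce an unbounded number of pairwise orthogonal limits in $\ker P^*$, contradicting finite-dimensionality. This is more hands-on and self-contained, and is essentially the quasimode construction you sketch as your alternative, though the paper uses Gaussian states and the projector $\pi_{\ran(P)}$ rather than a right-parametrix correction. (Your correction step ``$P^*Q=\Id+S$'' is a bit rough as written: that identity lets you approximately \emph{solve} $P^*v=f$, not directly cancel $P^*u_k$; the paper sidesteps this by projecting rather than correcting.)

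In short: your Fredholm argument is the quickest valid proof if one is willing to cite H\"ormander; the paper's approach is more elementary and has the side benefit of making the role of $\ker\sigma_{P^*}(x,\xi)$ explicit, which is exactly what is needed for the subsequent Lemma~\ref{lemma:microlocal-surjectivity}.
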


\begin{proof}
Injectivity of the principal symbol implies the existence of a pseudodifferential operator $Q \in \Psi^{-m}(M;E,F)$ such that $QP = \mathbbm{1}_{E} + R$, where $R$ is a smoothing operator. By classical arguments, this implies that for any $s \in \R$, the image $P(H^{s+m}(M,E)) \subset H^s(M,F)$ is closed. This implies the decomposition
\[
H^s(M,F) = \ker(P^*|_{H^s(M,F)}) \oplus^\bot P(H^{s+m}(M,E)),
\]
which is orthogonal for the $L^2$ scalar product. By ellipticity, the kernel of $P$ is finite dimensional. We introduce the formally self-adjoint operator $\Delta := P^*P$ and denote by $\Pi_0$ the $L^2$-orthogonal projection on $\ker \Delta = \ker P$. Thus, any section $f \in H^s(M,F)$ can be uniquely decomposed as $f = Pu + v$, where $v \in \ker(P^*|_{H^s(M,F)})$ and $u \in H^{s+m}(M,E) \cap \ker \Pi_0$. The $L^2$-orthogonal projection on the image of $P$ is a self-adjoint pseudodifferential operator of order $0$, defined by
\[
\pi_{\ran(P)} := P \Delta^{-1} P^*,
\]
where $\Delta^{-1}$ is the operator defined by the $0$ in restriction to $\ran(\Pi_0)$ and by the inverse of $\Delta$ on $\ker(\Pi_0)$. The $L^2$-orthogonal projection on the kernel of $P^*$ is then given by $\pi_{\ker P^*} := \mathbbm{1}_F - \pi_{\ran(P)}$. Note that $f \in \ker (P^*|_{H^s(M,F)})$ if and only if $\pi_{\ran(P)}f = 0$.

We first show that $\ker(P^*|_{H^s(M,F)}) \neq \left\{ 0 \right\}$. Assume it is not the case, that is any $f \in H^s(M,F)$ is of the form $f = Pu$, where $u \in H^{s+m}(M,E)$. We can then consider for $h > 0$ and $(x_0,\xi_0) \in T^*M$ a section $f \in C^\infty(M,F)$ such that $0 \neq f(x_0) \in \ker \sigma_{\pi_{\ran(P)}}(x_0,\xi_0)$ (note that $\sigma_{\pi_{\ran(P)}}(x_0,\xi_0)$ is a symbol of order $0$; it is the orthogonal projection on the image $\sigma_P(x_0,\xi_0)(E_{x_0}) \subset F_{x_0}$). This is always possible since $\rk(F) > \rk(E)$. We further assume that $\|f(x)\|_{F} = 1$ for all $x$ in a neighborhood of $x_0$. We consider a Lagrangian state $e^{\frac{i}{h} S}$ such that $S(x_0)=0, \dd S(x_0) = \xi_0$. Then, we have $\pi_{\ran(P)}(e^{\frac{i}{h}S}f)(x_0) = \sigma_{\pi_{\ran(P)}}(x_0,\xi_0)f(x_0) + \mc{O}(h) = \mc{O}(h).$ But we have:
\[
1 = \|e^{\frac{i}{h} S}f(x_0)\|_{F_{x_0}} = \|(\pi_{\ran(P)}(e^{\frac{i}{h} S}f))(x_0)\|_{F_{x_0}} = \mc{O}(h),
\]
which is a contradiction.


We now assume that $\ker(P^*|_{H^s(M,F)})$ is finite dimensional. Writing $\mathbbm{1}_F = \pi_{\ran(P)} + \pi_{\ker(P^*)}$, we can construct a Gaussian state\footnote{Here, in local coordinates, $e_{x_0,\xi_0}$ has the form:
\[
e_{x_0,\xi_0}(x) = (\pi h)^{-n/4} e^{\frac{i}{h}\xi_0 \cdot (x-x_0) - \frac{1}{2h}|x-x_0|^2}.
\]} $\varphi e_{x_0,\xi_0} f$, where $\varphi$ is a local cut-off function with $\varphi = 1$ near $x_0$. We assume $f(x_0) \in \ker \sigma_{\pi_{\ran(P)}}(x_0,\xi_0)$ and $\|f\|_F = 1$ close to $x_0$. It can be checked that $\|\varphi e_{x_0, \xi_0}(h) f\|_{L^2} = c + o(1)$, for some $c > 0$. Moreover, a computation in local coordinates gives (see also \cite[Equation 1.5]{Duistermaat-Guillemin-75} and \cite[p. 102]{Zworski-12})
\begin{multline*}
\|\pi_{\ran (P)} \varphi e_{x_0, \xi_0} (h) f\|_{L^2}^2 = \langle{\pi_{\ran (P)} \varphi e_{x_0, \xi_0} (h) f, \varphi e_{x_0, \xi_0} (h) f}\rangle_{L^2}\\
= \langle{\sigma_{\pi_{\ran (P)}}(x_0, \xi_0) f(x_0), f(x_0)}\rangle_{F_{x_0}} + o(1) = o(1),
\end{multline*}
and so we obtain
\[
\varphi e_{x_0,\xi_0}(h) f = \underbrace{\pi_{\ran(P)} \varphi e_{x_0,\xi_0}(h)f}_{o_{L^2}(1)} + \pi_{\ker(P^*)} \varphi e_{x_0,\xi_0}(h)f.
\]
Since $\ker(P^*)$ is finite dimensional, we can always assume that $\pi_{\ker(P^*)} \varphi e_{x_0,\xi_0}(h)f \rightarrow_{h \rightarrow 0} v \in \ker(P^*|_{H^s(M,F)})$, that is $\varphi e_{x_0,\xi_0}(h)f \rightarrow v\in \ker(P^*|_{H^s(M,F)})$ (the convergence takes place in $L^2$ but the limit $v$ is in $H^s$). But this can always be achieved by taking an arbitrary large number of such $\varphi_i e_{x_i,\xi_i}(h)f_i$, $i=1,...,N$ with disjoint supports on the manifold ($\varphi_i$ is supported near $x_i$). This produces non-zero elements $v_i \in  \ker(P^*|_{H^s(M,F)})$ which are all pairwise orthogonal, contradicting the finite-dimensionality of $\ker(P^*)$.
\end{proof}

We introduce the following property.

\begin{definition}
\label{definition:uniform-divergence-type}
We say that $P^*$ is \emph{uniformly of divergence type} if it is of divergence type and for all $x_0 \in M$:
\[
\Sigma_{|\xi|=1} \ker \sigma_{P^*}(x_0,\xi) = F_{x_0}.
\]
\end{definition}

Note that $\ker \sigma_{P^*}(x_0,\xi) = \ker \sigma_{\pi_{\ran(P)}}(x_0,\xi)$. The restriction $\xi \neq 0$ is due to the fact that the principal symbol is $0$-homogeneous in $\xi$ and thus only makes sense for large $\xi$. We then have the following result:

\begin{lemma}
\label{lemma:microlocal-surjectivity}
Assume $P$ is of gradient type. Let $s > n/2$, $x \in M$ and define the map $\mathrm{ev}_x : \ker P^*|_{H^s(M,F)} \rightarrow F_{x}$ by $\mathrm{ev}_x(f) := f(x)$. Then:
\[
\mathrm{ev}_x : \ker P^*|_{H^s(M,F)} \rightarrow \Sigma_{|\xi|=1} \ker \sigma_{P^*}(x_0,\xi)
\]
is surjective. In particular, if $P^*$ is of uniform divergence type, then $\mathrm{ev}_x : \ker P^*|_{H^s(M,F)} \rightarrow F_{x}$ is surjective.
\end{lemma}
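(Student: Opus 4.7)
The plan is to use Gaussian coherent states concentrated in phase space at $(x_0, \xi_0/h)$ and project them onto $\ker P^*$ using the pseudodifferential projector $\pi_{\ker P^*}$ built in the proof of Lemma \ref{lemma:infkernel}. Since the image $\mathrm{ev}_{x_0}(\ker P^*|_{H^s})$ is a subspace of the finite-dimensional fiber $F_{x_0}$, it is automatically closed. Hence it suffices to show that, for every fixed $\xi_0$ with $|\xi_0|=1$ and every $v \in \ker \sigma_{P^*}(x_0,\xi_0)$, one can produce a sequence $w_h \in \ker P^*|_{H^s}$ with $w_h(x_0) \to v$. Summing the contributions over finitely many such $\xi_0$ then exhausts $\Sigma_{|\xi|=1} \ker \sigma_{P^*}(x_0,\xi)$, and the ``in particular'' statement is immediate in the uniform case where this sum equals $F_{x_0}$.

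To construct the sequence, work in local coordinates around $x_0$ in which $F$ is trivialized, and pick a cutoff $\chi \in C^\infty_c$ equal to $1$ near $x_0$. Define the coherent state
\[
u_h(x) := \chi(x)\, e^{i \xi_0 \cdot (x-x_0)/h}\, e^{-|x-x_0|^2/(2h)}\, v \in C^\infty_c(M,F),
\]
so that $u_h(x_0) = v$, and set $w_h := \pi_{\ker P^*} u_h$. Recall from the proof of Lemma \ref{lemma:infkernel} that $\pi_{\ker P^*} = \mathbbm{1}_F - P \Delta^{-1} P^*$ is a pseudodifferential operator of order $0$, whose principal symbol at $(x,\xi)$ (with $\xi \neq 0$) is the orthogonal projection $F_x \to \ker \sigma_{P^*}(x,\xi)$; dually, $\sigma_{\pi_{\ran P}}(x,\xi)$ is the orthogonal projection onto $\ran \sigma_P(x,\xi) = \ker \sigma_{P^*}(x,\xi)^\perp$. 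In particular $w_h \in C^\infty \cap \ker P^*$, and because $s > n/2$ the Sobolev embedding $H^s \hookrightarrow C^0$ makes $w_h(x_0) = u_h(x_0) - (\pi_{\ran P} u_h)(x_0) = v - (\pi_{\ran P} u_h)(x_0)$ well-defined.

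The heart of the argument is a coherent-state / stationary-phase computation of the pointwise value $(\pi_{\ran P} u_h)(x_0)$. Writing $\pi_{\ran P}$ in local coordinates as an oscillatory integral with full symbol $a(x,\xi)$, performing the explicit Gaussian integral in the spatial variable, and then rescaling $\eta := h\xi$ yields
\[
(\pi_{\ran P} u_h)(x_0) = (2\pi h)^{-n/2} \int a(x_0, \eta/h)\, e^{-|\xi_0 - \eta|^2/(2h)}\, v\, d\eta + o(1),
\]
where the $o(1)$ term collects the contributions from the cutoff $\chi$ and from lower-order symbolic corrections. As $h \to 0$ the Gaussian factor concentrates at $\eta = \xi_0$, and by $0$-homogeneity of the principal symbol, $a(x_0, \eta/h) \to \sigma_{\pi_{\ran P}}(x_0, \xi_0)$; evaluating the Gaussian integral then gives $(\pi_{\ran P} u_h)(x_0) \to \sigma_{\pi_{\ran P}}(x_0, \xi_0) v$. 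By construction $v \in \ker \sigma_{P^*}(x_0, \xi_0)$ is orthogonal to $\ran \sigma_P(x_0, \xi_0)$, so the limit is zero. Therefore $w_h(x_0) \to v$, and closedness of $\mathrm{ev}_{x_0}(\ker P^*|_{H^s}) \subset F_{x_0}$ gives $v \in \mathrm{ev}_{x_0}(\ker P^*|_{H^s})$.

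The main technical obstacle will be the bookkeeping in this stationary-phase estimate: one has to control the cutoff $\chi$, treat the \emph{full} symbol $a(x,\xi)$ rather than only its principal part, and verify that the subleading contributions to the \emph{pointwise} value at $x_0$ (not merely the $L^2$ norm) are genuinely $o(1)$ uniformly in $h$. This is a standard but slightly delicate coherent-state computation, close in spirit to the argument already used in the proof of Lemma \ref{lemma:infkernel}.
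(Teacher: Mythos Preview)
Your argument is correct and follows the same underlying idea as the paper: test the order-zero projector $\pi_{\ran P}$ against a semiclassical state concentrated at $(x_0,\xi_0)$, so that the pointwise value at $x_0$ is governed by the principal symbol $\sigma_{\pi_{\ran P}}(x_0,\xi_0)$, which annihilates $v\in\ker\sigma_{P^*}(x_0,\xi_0)$; then use that $\ran(\mathrm{ev}_{x_0})\subset F_{x_0}$ is finite-dimensional and hence closed.

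The one noteworthy difference is that the paper uses a \emph{Lagrangian} state $f(h)=e^{iS/h}f$ with $S(x_0)=0$ and $\dd S(x_0)=\xi_0$, rather than a Gaussian coherent state. This buys a genuine simplification: since $f(h)(x_0)=f(x_0)$ is \emph{independent of $h$}, one gets directly
\[
f(x_0)=\underbrace{\sigma_{\pi_{\ran P}}(x_0,\xi_0)f(x_0)}_{=0}+\mc{O}(h)+\underbrace{(\pi_{\ker P^*}f(h))(x_0)}_{\in\,\ran(\mathrm{ev}_{x_0})},
\]
and composing with the projection onto $\ran(\mathrm{ev}_{x_0})^\perp$ yields $\pi_{\ran(\mathrm{ev}_{x_0})^\perp}f(x_0)=\mc{O}(h)$, hence $=0$. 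No stationary-phase or Gaussian integral is needed, and the ``bookkeeping'' you flag as the main technical obstacle disappears entirely. Your Gaussian version works too, but the Lagrangian-state formulation is the cleaner packaging of the same mechanism.
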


The choice of $s > n/2$ is simply there to ensure that $H^s$ embeds continuously into $C^0$ and thus $\mathrm{ev}_x$ is well-defined. The previous Lemma gives a lower bound on the possible values that elements in $\ker P^*$ can take at a given point.

\begin{proof}
Fix $x_0 \in M$. It is sufficient to prove that for $\xi \in T^*M \setminus \left\{ 0 \right\}$, one has $\ker \sigma_{P^*}(x_0,\xi) \subset \ran(\mathrm{ev})$. We consider a Lagrangian state $f(h) := e^{\frac{i}{h} S} f$, for some smooth section $f$ (independent of $h > 0$) where $\dd S(x_0) = \xi,S(x_0)=0$, and $f(x_0) \in \ker \sigma_{P^*}(x_0,\xi)$. Then
\[
\begin{split}
f(x_0) = (f(h))(x_0) & = (\pi_{\ran(P)}f(h))(x_0) + (\pi_{\ker(P^*)}f(h))(x_0) \\
& = \underbrace{\sigma_{\pi_{\ran(P)}}(x_0,\xi) f(x_0)}_{=0} + \mc{O}(h) + \underbrace{(\pi_{\ker(P^*)}f(h))(x_0)}_{\in \ran(\mathrm{ev}_{x_0})}.
\end{split}
\]
Composing with the orthogonal projection $\pi_{\ran(\mathrm{ev}_{x_0})^\bot}$ on $\ran(\mathrm{ev}_{x_0})^\bot$, we then obtain that pointwise at $x_0$:
\[
\pi_{\ran(\mathrm{ev}_{x_0})^\bot} f(x_0) = \mc{O}(h),
\]
and thus, since $f(x_0)$ is independent of $h$, $\pi_{\ran(\mathrm{ev}_{x_0})^\bot} f(x_0) = 0$ i.e. $f(x_0) \in \ran(\mathrm{ev}_{x_0})$.
%
%
%
%
\end{proof}

\subsubsection{Example: the divergence of a vector field}

Let us illustrate the preceding property by a simple example i.e. the divergence of a vector field. Let $(M,g)$ be a smooth Riemannian manifold. Given $X \in C^\infty(M,TM)$, the divergence $\delta X \in C^\infty(M)$ of $X$ is defined as minus the $L^2$ formal adjoint of the gradient operator, namely:
\[
\langle f, \delta X \rangle := - \langle \nabla f, X \rangle.
\]

\begin{lemma}\label{lemma:divergence}
The divergence operator $\delta$ is of uniform divergence type.
\end{lemma}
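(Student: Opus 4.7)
The plan is to identify the principal symbol of $\delta$ explicitly, then verify the two conditions in Definition \ref{definition:uniform-divergence-type} by direct inspection of that symbol. Concretely, I first note that since $\delta = -\nabla^*$ (where $\nabla : C^\infty(M) \to C^\infty(M,TM)$ is the gradient), the principal symbol of the gradient at $(x,\xi)$ is the injection $\mathbb{C} \to T_x M$, $c \mapsto i c \, \xi^{\sharp}$. Taking the formal adjoint with respect to the Riemannian metric gives
\[
\sigma_{\delta}(x,\xi) : T_xM \to \mathbb{C}, \qquad \sigma_{\delta}(x,\xi)(v) = -i\,\langle v, \xi^{\sharp}\rangle_{g}.
\]
For any $\xi \neq 0$ this map is non-zero, hence surjective onto the one-dimensional target, which shows that $\delta$ is of divergence type in the sense of Definition \ref{definition:divergence-type} (note $\mathrm{rk}(TM)=n > 1 = \mathrm{rk}(\mathbb{C})$ since we are implicitly assuming $n \geq 2$; the case $n=1$ is degenerate and falls outside the setup).

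Next, for uniform divergence type I must show that, for every $x_0 \in M$, the sum
\[
\Sigma_{|\xi|=1} \ker \sigma_{\delta}(x_0,\xi)
\]
exhausts $T_{x_0}M$. From the formula above, $\ker \sigma_{\delta}(x_0,\xi)$ is precisely the hyperplane $(\xi^{\sharp})^{\perp} \subset T_{x_0}M$. The key step is therefore the elementary observation that for $n \geq 2$ a sufficient collection of such hyperplanes already spans the whole tangent space.

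To make this explicit I would pick an orthonormal basis $(e_1,\dotso,e_n)$ of $T_{x_0}M$ with dual basis $(\xi_1,\dotso,\xi_n)$ of unit covectors. Then for each $i$ the hyperplane $\ker \sigma_{\delta}(x_0,\xi_i) = (\xi_i^{\sharp})^{\perp}$ contains $e_j$ for every $j \neq i$. Selecting, say, $\xi_1$ and $\xi_2$, the two hyperplanes $(\xi_1^{\sharp})^{\perp}$ and $(\xi_2^{\sharp})^{\perp}$ already contain all basis vectors $e_1, e_2, \dotso, e_n$, so
\[
\ker \sigma_{\delta}(x_0,\xi_1) + \ker \sigma_{\delta}(x_0,\xi_2) = T_{x_0}M,
\]
and a fortiori $\Sigma_{|\xi|=1} \ker \sigma_{\delta}(x_0,\xi) = T_{x_0}M$. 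This verifies the uniform divergence type condition and completes the proof. There is no genuine obstacle here; the lemma is essentially a linear-algebra remark on the principal symbol, and its purpose is only to furnish the simplest example illustrating Definition \ref{definition:uniform-divergence-type} and Lemma \ref{lemma:microlocal-surjectivity} (which in this case recovers the well-known surjectivity of $\mathrm{ev}_x$ on divergence-free vector fields).
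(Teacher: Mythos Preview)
Your proof is correct and follows essentially the same approach as the paper: compute the principal symbol of $\delta$, identify its kernel at $(x_0,\xi)$ as the hyperplane $(\xi^\sharp)^\perp$, and observe that these hyperplanes span $T_{x_0}M$. The paper phrases the last step as ``for each $v$, choose $\xi$ with $\xi^\sharp \perp v$'' rather than picking two basis covectors, but this is only a cosmetic difference; your explicit remark that $n\geq 2$ is needed is a welcome addition.
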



\begin{proof}
We first prove that $\delta$ is of divergence type. For that, it is sufficient to compute its principal symbol. It is an elementary computation to show that $\sigma_\nabla(x,\xi) = i \times \xi^\sharp$ and thus for $v \in T_xM$, one has $\sigma_{\delta}(x,\xi)v = i \langle \xi,v \rangle$. Observe that $\sigma_\nabla(x,\xi)$ is injective for all $\xi \neq 0$ with constant rank equal to $1$, i.e. $\delta$ is of divergence type. We now show that $\delta$ satisfies Definition \ref{definition:uniform-divergence-type}. Pick $x_0 \in M, v \in T_xM$ and consider $\xi \in T_{x_0}^*M \setminus \left\{ 0 \right\}$ such that $\xi^\sharp \bot v$. Then:
\[
\sigma_\delta(x_0,\xi)v = i \langle \xi, v \rangle =  i g(\xi^\sharp, v)  = 0.
\]
\end{proof}

\subsubsection{Example: differential forms} More generally, consider the bundle of differential $k$-forms, $\Omega^k = \Lambda^k T^*M$, the exterior derivative $d$ and its formal adjoint $d^*$ acting on sections of $\Omega^k$. It can be checked that for $\alpha \in \Omega^k(x)$
\[\sigma_d(x, \xi)\alpha = i \xi \wedge \alpha, \quad \sigma_{d^*}(x, \xi) \alpha = -i \iota_{\xi^\sharp} \alpha.\]
In fact one may show $\ker \sigma_d(x, \xi)|_{\Omega^k(x)} = \xi \wedge \Omega^{k-1}(x)$, so $d$ is of gradient type if and only if $k = 0$. Equivalently $d^*$ is of divergence type if and only if $k = 1$, which by metric duality is the content of Lemma \ref{lemma:divergence}. Again by duality, we obtain $\ker \sigma_{d^*}(x, \xi)|_{\Omega^k(x)} = \iota_{\xi^\sharp} \Omega^{k+1}(x)$ and since pointwise in local coordinates every differential $k$-form $dx_{i_1} \wedge \dotso \wedge dx_{i_k}$ is obtained by contracting a suitable $(k+1)$-form, we obtain
\begin{equation}\label{eq:diffforms}
	\Sigma_{|\xi| = 1} \ker_{d^*}(x, \xi) = \Omega^k(x).
\end{equation}
Observe that Lemma \ref{lemma:infkernel} does not apply directly to $d$ but by the Hodge decomposition $\ker d^*|_{\Omega^k} = d^*C^\infty(M; \Omega^{k+1}) \oplus \mathcal{H}^k$ is infinite dimensional, where $\mathcal{H}^k$ are harmonic $k$-forms. However, setting $\Delta = dd^* + d^*d$ in the proof of the same lemma would produce the analogous result with minor corrections. Finally, Lemma \ref{lemma:microlocal-surjectivity} also does not apply directly, but by using the Hodge decomposition and \eqref{eq:diffforms} we obtain the analogous result: for $x \in M$, the map $\mathrm{ev}_x: \ker d^*|_{H^s(M; \Omega^k)} \to \Omega^k(x)$ is surjective for $s > n/2$.

\subsubsection{Counterexample: a divergence type operator that is not uniform}

This is a very elementary example constructed by hand so that it does not work, but it is very likely that one can find more elaborate examples. Consider for $(M,g)$ a smooth Riemannian manifold and a vector bundle $\mc{E} \rightarrow M$ over $M$. Consider an elliptic selfadjoint differential operator $P : C^\infty(M,\mc{E}) \rightarrow C^\infty(M,\mc{E})$ and assume $P$ is invertible. Let $Q : C^\infty(M,\mc{E}) \rightarrow C^\infty(M,\mc{E} \oplus \mc{E})$ defined by $Qf := (Pf, -Pf)$, then $\sigma_Q(x,\xi)u = (\sigma_P(x,\xi)u, -\sigma_P(x,\xi)u)$ and $\sigma_{Q^*}(x,\xi)(u_1,u_2) = \sigma_P(x,\xi)(u_1-u_2)$. Thus $Q$ is of gradient type or equivalently $Q^*$ is of divergence type. But $Q^*$ is not of uniform divergence type since 
\[
\Sigma_{|\xi|=1} \ker \sigma_{Q^*}(x,\xi) = \left\{ (u, u) ~|~ u \in \mc{E}_x \right\} \simeq \mc{E}_x \neq \mc{E}_x \oplus \mc{E}_x.
\]
In particular, it is easy to describe the kernel of $Q^*$ since $P$ is invertible, namely
\[
\ker Q^*|_{C^\infty(M,\mc{E}\oplus \mc{E})} = \left\{(f,f) ~|~ f \in C^\infty(M,\mc{E})\right\},
\]
and thus for every $x \in M$, the map $\mathrm{ev}_x : \ker Q^*|_{C^\infty(M,\mc{E}\oplus \mc{E})} \rightarrow \mc{E}_x \oplus \mc{E}_x$ defined by $\mathrm{ev}_x(f_1,f_2) = (f_1(x),f_2(x))$ is not surjective. Note that this example shows that the lower bound given by Lemma \ref{lemma:microlocal-surjectivity} is sharp. Also observe that, taking $Qf = (\Delta f, -\Delta f) \in C^\infty(M,\C^2)$, where $\Delta : C^\infty(M,\C) \rightarrow C^\infty(M,\C)$ is the Laplacian induced by $g$ and acting on functions, one obtains an operator which is divergence type but not uniform. However this time, the map $\mathrm{ev}_x$ is surjective for every $x \in M$. This comes from the fact that the kernel of $\Delta$ is not trivial (and given by the constants).

\subsection{Application to trace-free divergence-free tensors}

We now study the operators $\X_+$ and $\X_-$ (see \eqref{eq:XX}) in the light of the preceding paragraph. We first have the

\begin{lemma}
The operator $\X_-$ is of divergence type.
\end{lemma}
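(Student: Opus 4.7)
The plan is to deduce the lemma from two facts already stated in the introduction of the paper: the adjoint relation $\X_+^* = -\X_-$, and the ellipticity of the operator
\[
\X_+ : C^\infty(M,\Omega_m \otimes \mc{E}) \rightarrow C^\infty(M,\Omega_{m+1} \otimes \mc{E}).
\]
Ellipticity of $\X_+$ says exactly that its principal symbol $\sigma_{\X_+}(x,\xi) \in \mathrm{Hom}(\Omega_m \otimes \mc{E}|_x,\, \Omega_{m+1} \otimes \mc{E}|_x)$ is injective for every $(x,\xi) \in T^*M \setminus \{0\}$. Taking fiberwise Hilbert-space adjoints preserves the rank and reverses injectivity and surjectivity, so
\[
\sigma_{\X_-}(x,\xi) = -\sigma_{\X_+}(x,\xi)^*
\]
is surjective for every nonzero $\xi$. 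Comparing with Definition \ref{definition:divergence-type}, this is precisely the claim that $\X_-$ is of divergence type. That is the entire argument.

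If one instead preferred a symbol computation from scratch, an alternative route would be to use the conjugation identity \eqref{equation:link-xd}, namely
\[
\X_- \pi_m^* = -\tfrac{m}{n-2+2m}\, \pi_{m-1}^* D^*_{\mc{E}},
\]
together with the principal symbol formula \eqref{eq:D*symbol} for $D^*_{\mc{E}}$, which pointwise is contraction against $\xi^\sharp$. Since $\pi_m^*$ restricts to an isomorphism between trace-free twisted symmetric $m$-tensors and twisted spherical harmonics of degree $m$ (up to a nonzero constant), surjectivity of $\sigma_{\X_-}$ would then reduce to the classical fact that, for $\xi \neq 0$, the interior product $\iota_{\xi^\sharp}$ maps trace-free symmetric $m$-tensors onto trace-free symmetric $(m-1)$-tensors.

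The only place where care is needed is the bookkeeping of trace conditions in the second approach, because $\pi_m^*$ only identifies spherical harmonics with \emph{trace-free} tensors — but since the image of $\X_-$ lives in $\Omega_{m-1} \otimes \mc{E}$ and the symbol of $D^*_{\mc{E}}$ preserves the trace-free subspace when read through $\pi_m^*$, no difficulty arises. The first approach, resting only on ellipticity plus adjunction, avoids this subtlety entirely, so I would present the proof as a two-line argument and relegate the symbol-level computation to a remark.
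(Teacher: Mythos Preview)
Your proposal is correct and follows essentially the same reduction as the paper: both observe that $\X_-$ being of divergence type is equivalent to $\X_+$ being of gradient type (i.e., having injective principal symbol). The only difference is one of detail: you cite the ellipticity of $\X_+$ as a known fact from the introduction, whereas the paper takes the extra step of writing out $\sigma_{\X_+}$ explicitly via the conjugation relation $\X_+\pi_m^* = \pi_{m+1}^*\mc{P}D_{\mc{E}}$ (the adjoint of your second approach) and then citing Dairbekov--Sharafutdinov and Guillemin--Kazhdan for the injectivity of $u \mapsto \mc{P}\mc{S}(\xi \otimes u)$ on trace-free tensors.
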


\begin{proof}
By definition, it is sufficient to prove that $\X_+$ is of gradient type i.e. that its principal symbol is injective. By \eqref{equation:link-xd}, the principal symbol of $\X_+$ is given (up to conjugating by the map $\pi_m^*$) by
\[
\sigma_{\X_+}(x,\xi) \left( \sum_{k=1}^r u_k(x) \otimes e_k(x) \right) = \sum_{k=1}^r  i \mc{P} \mc{S}(\xi \otimes u_k(x)) \otimes e_k(x),
\]
where $u_k(x) \in \otimes^m_S T^*_xM|_{0-\Tr}$. Thus, it is sufficient to prove that
\[
\otimes^m_S T^*M|_{0-\Tr} \ni u \mapsto \mc{P} \mc{S}(\xi \otimes u)
\]
is injective. This is the content of \cite[Theorem 5.1]{Dairbekov-Sharafutdinov-10}; it can also be found in \cite{Guillemin-Kazhdan-80-2}.
\end{proof}

As a direct application of the preceding paragraph, we obtain that $\ker \X_-|_{H^s(M,\Omega_m)}$ is infinite-dimensional for all $s \in \R$. We also have:

\begin{lemma}
\label{lemma:uniform-x-}
The operator $\X_-$ is of uniform divergence type if and only if $n \geq 3$.
\end{lemma}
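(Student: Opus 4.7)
The plan is to pull the question back from spherical harmonics to harmonic polynomials via $\pi_m^*$, identify the principal symbol as a constant-coefficient derivation, and then finish using irreducibility of $\mathbf{H}_m(E)$ as an $O(E)$-representation. Fix a point $x \in M$ and, using the metric, identify $T_xM$ with a Euclidean space $E$ of dimension $n$. Combining the symbol formula \eqref{eq:D*symbol} with the intertwining relation \eqref{equation:link-xd}, and using the adjoint identity $\Phi^*(P) = \partial(\overline{R})P$ from Section 2.1.2 applied to $R(v) = \langle \xi, v\rangle$, I expect to identify the principal symbol, up to a non-zero scalar, as the constant-coefficient derivation
\[
\sigma_{\X_-}(x, \xi) \colon p \otimes e \longmapsto \partial_{\xi^\sharp} p \otimes e, \qquad p \in \mathbf{H}_m(E),\ e \in \mc{E}_x.
\]
The crucial point is that $\partial_{\xi^\sharp}$ commutes with $\Delta_E$, hence automatically preserves harmonicity, so no subsequent projection onto $\mathbf{H}_{m-1}(E)$ is needed.

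With this in hand, the kernel is immediate: after rotating so that $\xi^\sharp = \e_1$, a harmonic polynomial $p$ is annihilated by $\partial_{\e_1}$ iff it is independent of $v_1$, i.e.\ $p \in \mathbf{H}_m(\xi^\perp)$, exactly as in the proof of Lemma \ref{lemma:algebraic-surjectivity-trivial-bundle}. Hence $\ker \sigma_{\X_-}(x, \xi) = \mathbf{H}_m(\xi^\perp) \otimes \mc{E}_x$. Set
\[
V := \Sigma_{|\xi| = 1} \mathbf{H}_m(\xi^\perp) \subset \mathbf{H}_m(E).
\]
Because the assignment $\xi \mapsto \mathbf{H}_m(\xi^\perp)$ is equivariant under the action of $O(E)$, the subspace $V$ is $O(E)$-invariant. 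Since $\mathbf{H}_m(E)$ is a well-known irreducible $O(E)$-representation, we must have either $V = \{0\}$ or $V = \mathbf{H}_m(E)$.

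It remains to split into the two cases. For $n \geq 3$ and $m \geq 1$, one has $\dim \xi^\perp = n-1 \geq 2$ and \eqref{eq:sphericalharmonicdim} yields $\dim \mathbf{H}_m(\R^{n-1}) = h(n-1, m) > 0$; thus $V \neq 0$ and therefore $V = \mathbf{H}_m(E)$. Tensoring with $\mc{E}_x$ gives $\Sigma_{|\xi| = 1} \ker \sigma_{\X_-}(x, \xi) = \mathbf{H}_m(E) \otimes \mc{E}_x$, which is the entire fiber, proving uniform divergence type (the case $m = 0$ being vacuous since $\X_-$ vanishes there). Conversely, for $n = 2$ and any $m \geq 2$, one has $\mathbf{H}_m(\R^1) = 0$ (a one-variable polynomial of degree $\geq 2$ is never harmonic), so every $\ker \sigma_{\X_-}(x, \xi)$ is trivial and $V = 0 \subsetneq \mathbf{H}_m(\R^2)$, contradicting uniform divergence type — this is consistent with the footnote to Theorem \ref{theorem:main} noting that in dimension $2$ the $\X_\pm$ are elliptic. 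The only technical nugget is the first step, the explicit identification of $\sigma_{\X_-}(x,\xi)$ with $\partial_{\xi^\sharp}$ on harmonic polynomials; once this is in place, the $O(n)$-representation theoretic dichotomy finishes the proof without further computation.
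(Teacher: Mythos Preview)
Your proof is correct and follows essentially the same strategy as the paper: both show that the sum of symbol kernels is an $O(n)$-invariant subspace of the irreducible representation $\Omega_m \simeq \mathbf{H}_m(E)$, forcing it to be either $0$ or everything. The only cosmetic difference is that the paper works with $\iota_{\xi^\sharp}$ on trace-free tensors and cites ellipticity for $n=2$, whereas you pass through $\pi_m^*$ to realize the symbol as $\partial_{\xi^\sharp}$ on harmonic polynomials and identify the kernel explicitly as $\mathbf{H}_m(\xi^\perp)$; this makes your treatment of both directions (non-triviality for $n\geq 3$ via $h(n-1,m)>0$, triviality for $n=2$, $m\geq 2$ via $\mathbf{H}_m(\R^1)=0$) pleasantly uniform and self-contained.
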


As a consequence of the previous paragraph, for all $s > n/2$ and $x_0 \in M$, the map $\mathrm{ev}_{x_0} : \ker \X_-|_{H^s(M,\Omega_m \otimes \mc{E})} \rightarrow \Omega_m  \otimes \mc{E}(x_0)$ defined by $\mathrm{ev}(w) := w(x_0)$ is surjective.

%
%
\begin{proof}
We use that according to \eqref{equation:link-xd}, the operator $\X_-$ on $C^\infty(M,\Omega_m \otimes \mc{E})$ is equivalent to the operator $D^*_{\mc{E}}$ acting on $C^\infty(M,\otimes^m_S T^*M|_{0-\Tr} \otimes \mc{E})$. Since the principal symbol of the operator acts diagonally on $\mc{E}$, it is sufficient to prove it for $\mc{E}=\C$, i.e. there is no twist.

It is classical that $D^* \sim \eta_-$ is elliptic if $n = 2$ (see e.g. \cite{Guillemin-Kazhdan-80}) so its symbol is injective and so $D^*$ is not of uniform divergence type. Now assume $n \geq 3$ and recall that $\sigma(D^*)(x, \xi) = - i \iota_{\xi^\sharp}: \Omega_m(x) \to \Omega_{m -1}(x)$. Note that $\dim \ker \iota_{\xi^\sharp} > 0$ by dimension counting \eqref{eq:sphericalharmonicdim}. Consider the subspace
	\[W := \Sigma_{|\xi|=1} \ker \iota_{\xi^\sharp}|_{\Omega_m(x)} \subset \Omega_m(x).\]
	We claim first that $W$ is invariant under the action of $O(n)$. To see this, let $A \in O(n)$; it suffices to show that $A \ker \iota_{\xi^\sharp} \subset \ker \iota_{A\xi^\sharp}$. Let $s \in \ker \iota_{\xi^\sharp}|_{\Omega_m(x)}$ and denote by $\e_1, \dotso, \e_n$ an orthonormal basis at $T_xM$
	\[s = \sum_I s_I \e_{i_1}^* \otimes \dotso \otimes \e_{i_m}^*, \quad I = (i_1, \dotso, i_m) \subset \left\{1,...,n\right\}^m.\]
	Note simply that $A\xi^\sharp = (A \xi)^\sharp$, where $A\xi = \xi \circ A^T$ is the left group action so
	\begin{align}
	\begin{split}\label{eq:invarianceOn}
		\iota_{A\xi^\sharp} As &= \sum_I s_I \iota_{A\xi^\sharp} (A\e_{i_1})^* \otimes \dotso \otimes (A\e_{i_m})^*\\
		&= \sum_I s_I \langle{A \xi^\sharp, A\e_{i_1}}\rangle_x (A\e_{i_2})^* \otimes \dotso \otimes (A\e_{i_m})^*\\
		&= A\sum_I s_I \langle{{\xi^\sharp}, \e_{i_1}}\rangle_x(\e_{i_2})^* \otimes \dotso \otimes (\e_{i_m})^* = A\iota_{\xi^\sharp}s.
	\end{split}
	\end{align}
	Here, we used that by definition $A$ preserves the inner product. This proves the observation and so $W \neq \{0\}$ is a sub-representation of $\Omega_m$. But it is well-known that the representation of $O(n)$ on $\Omega_m(x)$ is irreducible, thus $W = \Omega_m(x)$ completing the proof.
\end{proof}

It is straightforward to extend this claim to all symmetric tensors of some order.
\begin{proposition}\label{prop:D_Euniformdivergence}
The operator $D^*_{\E}$ acting on \emph{all} symmetric tensors $C^\infty(M; \otimes_S^m T^*M \otimes \E)$ is of uniform divergence type.
\end{proposition}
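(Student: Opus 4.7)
First I would reduce to the untwisted case: since the principal symbol in \eqref{eq:D*symbol} acts diagonally on the $\E$-factor, the claim for $D^*_{\E}$ is equivalent to the claim for $D^*$ acting on $C^\infty(M, \otimes^m_S T^*M)$. Surjectivity of the symbol $\iota_{\xi^\sharp} : \otimes^m_S T^*_xM \to \otimes^{m-1}_S T^*_xM$ for $\xi \neq 0$ is already recorded after \eqref{eq:D*symbol}, so what is left is the spanning property
\[
\Sigma_{|\xi|=1} \ker \iota_{\xi^\sharp}|_{\otimes^m_S T^*_xM} = \otimes^m_S T^*_xM, \qquad x \in M.
\]

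Next, I would transfer the problem to the polynomial model $\mathbf{P}_m(T_xM)$ via the isomorphism $\lambda_m$ of \S\ref{section:tensors}. A short application of the chain rule to $\lambda_m u(v) = u(v, \dotso, v)$, together with the symmetry of $u$, yields the conjugation formula $(\xi^\sharp \cdot \nabla)(\lambda_m u) = m\, \lambda_{m-1}(\iota_{\xi^\sharp} u)$. Under $\lambda_m$, the subspace $\ker \iota_{\xi^\sharp}$ therefore corresponds to those homogeneous degree-$m$ polynomials on $T_xM$ that are constant along $\xi^\sharp$, i.e.\ polynomials depending only on the hyperplane $(\xi^\sharp)^\perp$.

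The third step is to exhibit an explicit family inside $\Sigma_{|\xi|=1}\ker \iota_{\xi^\sharp}$. For any $a \in T_xM$, the polynomial $(a^\flat)^m(v) = g(a,v)^m$ satisfies $(\xi^\sharp \cdot \nabla)(a^\flat)^m = m\,g(a, \xi^\sharp)\,(a^\flat)^{m-1}$, which vanishes whenever $\xi^\sharp \perp a$; such $\xi \neq 0$ exists since $n \geq 2$. Hence $\lambda_m^{-1}((a^\flat)^m)$ belongs to $\Sigma_{|\xi|=1} \ker \iota_{\xi^\sharp}$ for every $a \in T_xM$, and I would then invoke the classical polarization identity, which ensures that the $m$-th powers $\{(a^\flat)^m : a \in T_xM\}$ already span $\mathbf{P}_m(T_xM)$.

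The only mildly delicate step is the conjugation relation in the second paragraph; everything else is either a symbolic diagonalization or a classical fact about $m$-th powers of linear forms. Note that, in contrast to Lemma \ref{lemma:uniform-x-}, the argument is uniform in dimension $n \geq 2$, since the $m$-th powers of linear forms simultaneously hit every isotypic component $\mc{J}^k \otimes^{m-2k}_S T^*_xM|_{0-\Tr}$ of \eqref{eq:symetrictensorsplitting}, bypassing the obstruction that appears on trace-free tensors when $n=2$.
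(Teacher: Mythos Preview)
Your argument is correct and takes a genuinely different route from the paper. The paper proceeds representation-theoretically: it restricts to each isotypic piece $\mc{J}^k\,\otimes^{m-2k}_S T^*_xM|_{0-\Tr}$ of \eqref{eq:symetrictensorsplitting}, shows that the subspace $W_k := \Sigma_{|\xi|=1}\ker\iota_{\xi^\sharp}$ there is $O(n)$-invariant, and invokes irreducibility of $\Omega_{m-2k}$ to conclude $W_k$ is the whole piece. You instead pass to the polynomial model, identify $\ker\iota_{\xi^\sharp}$ with the degree-$m$ polynomials independent of the $\xi^\sharp$-direction via the conjugation $(\xi^\sharp\!\cdot\!\nabla)\lambda_m = m\,\lambda_{m-1}\iota_{\xi^\sharp}$, and note that every pure power $\langle a,\cdot\rangle^m$ lies in such a kernel for any $\xi^\sharp\perp a$; the classical polarization identity then spans all of $\mathbf{P}_m$.

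Your approach is more elementary (no Schur-type argument) and, as you observe, covers $n=2$ as well. It also sidesteps an issue in the paper's argument that you implicitly point to: the paper asserts $W_k\neq\{0\}$ on each piece without justification, and for $k\geq 1$ this is actually false. For instance $g\in\mc{J}(\otimes^0_S T^*_xM)$ satisfies $\iota_{\xi^\sharp}g=\xi\neq 0$ for every $\xi\neq 0$, so $W_{m/2}=\{0\}$ whenever $m$ is even; more generally no nonzero element of $\mc{J}^k\Omega_{m-2k}$ with $k\geq 1$ is annihilated by a single $\iota_{\xi^\sharp}$. The proposition is still true (e.g.\ $g=\sum_i (\e_i^*)^{\otimes 2}$ lies in the \emph{sum} of kernels), but one cannot treat the isotypic components independently. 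Your direct argument on the full $\otimes^m_S T^*_xM$ avoids this cleanly.
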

\begin{proof}
It suffices to consider $\E = \mathbb{C}$. Next, it is sufficient to recall the decomposition in \eqref{eq:symetrictensorsplitting}: for any $k$, consider 
\[\{0\} \neq W := \Sigma_{|\xi| = 1} \ker \iota_{\xi^\sharp}|_{\mathcal{J}^k \otimes_S^{m - 2k} T_x^*M|_{0-\Tr}} \subset \mathcal{J}^k \otimes_S^{m - 2k} T_x^*M |_{0-\Tr} \equiv \Omega_{m - 2k}(x).\]
One checks that $O(n)$ acts on the left on $\mathcal{J}^k \otimes_S^{m - 2k}T_x^*M|_{0-\Tr} $ (as $g(A\cdot, A\cdot) = g(\cdot, \cdot)$ for $A \in O(n)$) via its action on $\Omega_{m - 2k}(x)$ and the computation in \eqref{eq:invarianceOn} remains valid to show $O(n)$ acts on $W$. As the representation of $O(n)$ on $\Omega_{m - 2k}(x)$ is irreducible we get $W = \Omega_{m - 2k}(x)$, proving the claim.
\end{proof}

\section{Generic absence of CKTs}

\subsection{Perturbation of the Laplacian}

\label{ssection:perturbation}

Consider a connection $\nabla^{\mc{E}}$ with CKTs. We denote by $\X^\Gamma := \X + \Gamma(v)$ the operators induced by the unitary connections $\nabla^{\mc{E}} + \Gamma$, where $\Gamma \in C^\infty(M,T^*M \otimes \mathrm{End}_{\mathrm{sk}}(\mc{E}))$ is small enough. We introduce $\Delta^\Gamma_+ := -  \X^\Gamma_- \X^\Gamma_+ \geq 0$. Each $\Delta^\Gamma_+$ is a Laplacian type operator in the sense that it is non-negative, formally selfadjoint (and with principal symbol given by $\sigma_{\Delta^{\Gamma}_+}(x,\xi) = |\xi|^2\mathbbm{1}_{\Omega_m}$). In particular, it is selfadjoint with domain $H^2$, its $L^2$-spectrum is discrete, contained in the positive real line and accumulates near $+\infty$. The eigenstates are smooth. By assumption, we have assumed that there are CKTs for the connection obtained with $\Gamma = 0$. We denote by $\Pi$ the $L^2$-orthogonal projection on the eigenstates at $0$ (the CKTs): it can be written as
\[
\Pi = \sum_{i=1}^d \langle \cdot , u_i \rangle_{L^2(M,\Omega_m)} u_i,
\]
where $(u_1, ...,u_d)$ forms an orthonormal family for the $L^2$ scalar product and $d$ is the dimension of the eigenspace at zero.

We choose a small (counter clockwise oriented) circle $\gamma$ around $0$ so that inside $\gamma$, $0$ is the only eigenvalue for the operator $\Delta_+^{\Gamma=0}$. Of course, this is an open property in the sense that it is still true for any small perturbation $\Delta_+^\Gamma$ with $\Gamma \neq 0$ of the operator. We introduce
\[
\Pi^\Gamma := \dfrac{1}{2\pi i} \int_{\gamma} (z-\Delta_+^\Gamma)^{-1} \dd z.
\]
For $\Gamma = 0$, we have $\Pi^{\Gamma} = \Pi$ is the $L^2$-orthogonal projection on the CKTs. For $\Gamma \neq 0$, some eigenvalues may leave $0$ (but they still have to be contained in the positive real line) and $\Pi^{\Gamma}$ is the $L^2$-orthogonal projection on all the eigenvalues contained inside the circle $\gamma$. We then define
\[
\lambda_\Gamma := \Tr\left(\Delta_+^\Gamma \Pi^\Gamma\right),
\]
which is the sum of the eigenvalues contained inside $\gamma$. Of course, for $\Gamma = 0$, $\lambda_{\Gamma=0}=0$. The map
\[
C^\infty(M,T^*M \otimes \mathrm{End}_{\mathrm{sk}}(\mc{E})) \ni \Gamma \mapsto (\lambda_{\Gamma},\Pi^\Gamma) \in \R \times\mc{L}(L^2)
\]
is smooth and we are going to compute its first and second derivatives at $\Gamma=0$. We start with the first derivative.

\begin{lemma}
\label{lemma:first-variation}
For all $A \in C^\infty(M,T^*M \otimes \mathrm{End}_{\mathrm{sk}}(\mc{E}))$, $\dd \lambda_{\Gamma=0}(A) = 0$.
\end{lemma}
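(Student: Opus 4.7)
My approach is to use the standard perturbation formula for eigenvalue sums, combined with the fact that $L_0 := \Delta_+^{\Gamma=0}$ annihilates the range of $\Pi$. The key point is that to leading order in $\Gamma$, $\lambda_\Gamma$ is a sum of squared quantities of the form $\|\X_+^\Gamma u\|^2$ tested against CKTs $u$, and since $\X_+ u = 0$ this quadratic form vanishes at first order.

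Concretely, I would proceed as follows. Fix $A \in C^\infty(M, T^*M \otimes \mathrm{End}_{\mathrm{sk}}(\E))$ and set $L_s := \Delta_+^{sA} = (\X_+^{sA})^* \X_+^{sA}$. Since the connection $\nabla^{\E} + sA$ is unitary, $\X^{sA}$ is formally skew-adjoint, hence $(\X_+^{sA})^* = -\X_-^{sA}$ and the operator $L_s$ is a non-negative self-adjoint Laplace-type operator. Because $\Pi^{sA}$ is the Riesz spectral projector of $L_s$ associated with eigenvalues inside $\gamma$, it commutes with $L_s$, so
\[
\lambda_{sA} = \Tr(L_s \Pi^{sA}) = \Tr(\Pi^{sA} L_s \Pi^{sA}).
\]
Differentiating at $s = 0$ and using the Leibniz rule,
\[
\tfrac{d}{ds}\big|_{s=0}\lambda_{sA} = \Tr(\dot\Pi\, L_0\, \Pi) + \Tr(\Pi\, \dot L_0\, \Pi) + \Tr(\Pi\, L_0\, \dot\Pi).
\]
The first and third terms vanish because $L_0 \Pi = \Pi L_0 = 0$ (by definition of $\Pi$ as the projection onto $\ker L_0$). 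Thus only the middle term survives:
\[
\dd\lambda_{\Gamma=0}(A) = \Tr(\Pi\, \dot L_0\, \Pi) = \sum_{i=1}^d \langle \dot L_0\, u_i, u_i\rangle_{L^2}.
\]

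Next I compute $\dot L_0$. Since $\X^{sA} = \X + sA(v)$, its decomposition into degree $m\pm 1$ parts reads $\X_\pm^{sA} = \X_\pm + s A_\pm$, where $A_\pm$ denotes the orthogonal projection of the multiplication operator $A(v)$ onto the $\Omega_{m\pm 1}$ component (the splitting is finite by Lemma \ref{lemma:multiplication-1-form}). Expanding
\[
L_s = (\X_+ + sA_+)^*(\X_+ + sA_+) = \X_+^* \X_+ + s\bigl(A_+^* \X_+ + \X_+^* A_+\bigr) + s^2 A_+^* A_+,
\]
so $\dot L_0 = A_+^* \X_+ + \X_+^* A_+$. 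For any CKT $u_i$, we have $\X_+ u_i = 0$, hence
\[
\langle \dot L_0\, u_i, u_i\rangle = \langle A_+^* \X_+ u_i, u_i\rangle + \langle \X_+^* A_+ u_i, u_i\rangle = \overline{\langle \X_+ u_i, A_+ u_i\rangle} + \langle A_+ u_i, \X_+ u_i\rangle = 0.
\]
Summing over $i$ gives $\dd\lambda_{\Gamma=0}(A) = 0$, which is the claim.

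I do not foresee a serious obstacle here: the only subtlety is justifying the smoothness of $s \mapsto (\lambda_{sA}, \Pi^{sA})$ and the differentiation of the contour integral, but this follows from standard analytic perturbation theory since $s \mapsto L_s$ is a polynomial (in fact quadratic) family of self-adjoint operators with common domain $H^2$, and $\gamma$ avoids the spectrum uniformly for small $s$. This is the reason we worked with the sum $\lambda_\Gamma = \Tr(\Delta_+^\Gamma \Pi^\Gamma)$ rather than individual eigenvalues, which may split and lose regularity.
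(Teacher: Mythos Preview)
Your proof is correct and follows essentially the same approach as the paper: both reduce the first variation to $\sum_i \langle \dot L_0 u_i, u_i\rangle$ and use $\X_+ u_i = 0$ to kill it. Your trick of writing $\lambda_{sA} = \Tr(\Pi^{sA} L_s \Pi^{sA})$ before differentiating is a clean simplification --- the paper instead differentiates $\Tr(L_s \Pi^{sA})$ directly and must separately compute, via the Laurent expansion $(z-\Delta_+)^{-1} = \Pi/z + R(z)$, that $\Tr(\Delta_+ \dot\Pi) = \Tr(-\dot\Delta_+ \Pi + \Pi \dot\Delta_+ \Pi) = 0$ by cyclicity, whereas your symmetric form makes the terms $\Tr(\dot\Pi L_0 \Pi)$ and $\Tr(\Pi L_0 \dot\Pi)$ vanish on sight since $L_0 \Pi = 0$.
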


\begin{proof}
We consider for small $s \in \R$ the family of operators $\Delta^{sA}_+ = -\X^{sA}_-\X^{sA}_+$. We have:
\[
\left. \dfrac{\dd}{\dd s} \Pi^{sA} \right|_{s=0} = \left. \dfrac{\dd}{\dd s} \dfrac{1}{2\pi i} \int_{\gamma} (z-\Delta_+^{sA})^{-1} \dd z  \right|_{s=0} =\dfrac{1}{2\pi i} \int_{\gamma} (z-\Delta_+)^{-1}\dot{\Delta}_+ (z-\Delta_+)^{-1} \dd z,
\]
and:
\[
\left. \dfrac{\dd}{\dd s} \lambda_{sA} \right|_{s=0}  = \Tr\left(\left. \dfrac{\dd}{\dd s} \Delta_+^{sA} \right|_{s=0}  \Pi \right) + \Tr \left(\Delta_+ \left. \dfrac{\dd}{\dd s} \Pi^{sA} \right|_{s=0} \right).
\]
We claim that both terms vanish (this is always the case for the second term, whatever the perturbation actually). Indeed, for the first term, we observe that
\[
\left. \dfrac{\dd}{\dd s} \Delta_+^{sA} \right|_{s=0} = - \X_- A_+ - A_- \X_+
\]
and thus, using that $\ker \Delta_+ = \ker \X_+$, we obtain:
\[
\begin{split}
\Tr\left(\left. \dfrac{\dd}{\dd s} \Delta_+^{sA} \right|_{s=0}  \Pi \right) & = -\Tr( (\X_- A_+ + A_- \X_+) \Pi) \\
& =- \Tr\left( \sum_i \langle \cdot,u_i \rangle_{L^2} \X_-A_+ u_i \right) \\
& = - \sum_i \langle \X_- A_+ u_i, u_i \rangle_{L^2}  = \sum_i \langle A_+ u_i, \X_+ u_i \rangle_{L^2} = 0.
\end{split}
\]
As far as the second term is concerned, we have using that $(z-\Delta_+)^{-1} = \Pi/z + R(z)$, where $R$ is holomorphic:
\[
\begin{split}
\Delta_+  \left. \dfrac{\dd}{\dd s} \Pi^{sA} \right|_{s=0} & = \dfrac{1}{2\pi i} \int_{\gamma} \Delta_+  (z-\Delta_+)^{-1}\dot{\Delta}_+ (z-\Delta_+)^{-1} \dd z  \\
& = \dfrac{1}{2\pi i} \int_{\gamma} \underbrace{(\Delta_+ -z)  (z-\Delta_+)^{-1}}_{=-\mathbbm{1}}\dot{\Delta}_+ (z-\Delta_+)^{-1} \dd z\\
&+  \dfrac{1}{2\pi i} \int_{\gamma} z  (z-\Delta_+)^{-1}\dot{\Delta}_+ (z-\Delta_+)^{-1} \dd z = - \dot{\Delta}_+ \Pi + \Pi \dot{\Delta}_+ \Pi,
\end{split}
\]
and taking the trace, we obtain:
\[
\begin{split}
\Tr\left( \Delta_+  \left. \dfrac{\dd}{\dd s} \Pi^{sA} \right|_{s=0} \right) &= \Tr\left( - \dot{\Delta}_+ \Pi + \Pi \dot{\Delta}_+ \Pi\right) \\
& = \Tr\left(- \dot{\Delta}_+ \Pi  + \dot{\Delta}_+ \Pi^2 \right)  = \Tr\left(- \dot{\Delta}_+ \Pi  + \dot{\Delta}_+ \Pi\right) = 0.
\end{split}
\]
This concludes the proof.
\end{proof}

We now compute the second variation of $\lambda$.

\begin{lemma}
\label{lemma:second-variation}
For all $A \in C^\infty(M,T^*M \otimes \mathrm{End}_{\mathrm{sk}}(\mc{E}))$:
\[
\dd^2\lambda_{\Gamma=0}(A,A) = \sum_{i=1}^d \|\pi_{\ker \X_-} A_+ u_i\|^2_{L^2}.
\]
\end{lemma}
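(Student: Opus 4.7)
The plan is to carry the contour-integral approach of Lemma \ref{lemma:first-variation} out to second order. Since $\nabla^{\mc{E}} + sA$ is affine in $s$, the Laplacian is a polynomial of degree two in $s$:
\[
\Delta_+^{sA} = \Delta_+ + sV + s^2 W, \qquad V := -\X_- A_+ - A_-\X_+, \qquad W := -A_-A_+.
\]
Differentiating $R(z,s) := (z - \Delta_+^{sA})^{-1}$ twice at $s=0$ yields
$\partial_s^2 R(z,0) = 2R(z)VR(z)VR(z) + 2R(z)WR(z)$, with $R(z) := (z - \Delta_+)^{-1}$. Near $z=0$, I would expand $R(z) = z^{-1}\Pi + B(z)$, where $B$ is holomorphic at $0$ with $\Pi B(z) = B(z)\Pi = 0$ and $B(0) = -G$, $G := \Delta_+^{-1}(\mathbbm{1}-\Pi)$ denoting the reduced resolvent (the inverse of $\Delta_+$ on $(\ker\Delta_+)^\perp$, extended by zero on $\ker\Delta_+$).

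After inserting this Laurent decomposition into $\lambda_{sA} = (2\pi i)^{-1}\int_\gamma z\,\Tr(R(z,s))\,\dd z$, multiplying by $z$, integrating around $\gamma$ and repeatedly using cyclicity of the trace, only the simple-pole residues survive and, after the non-trivial partial cancellation between $2\Tr(\dot\Delta_+\dot\Pi)$ and $\Tr(\Delta_+\ddot\Pi)$, one arrives at the classical Kato second-order perturbation identity
\[
\dd^2\lambda_{\Gamma=0}(A,A) = \Tr(\Pi W) - \Tr(\Pi V G V).
\]
Equivalently, one can check directly that $\Pi V \Pi = 0$ (since for $u_i \in \ker\Delta_+$, $V u_i = -\X_- A_+ u_i \perp \ker \X_+$), so the $d$ branches emerging from $0$ are $\lambda_i(s) = s^2 \mu_i + O(s^3)$ and Rayleigh--Schrödinger perturbation at a degenerate eigenvalue produces the same formula by tracing over the effective second-order matrix. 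The two resulting traces compute as follows: using the skew-Hermiticity of $A$, which gives $A_\pm^* = -A_\mp$, one obtains
\[
\Tr(\Pi W) = -\sum_i \langle A_-A_+u_i, u_i\rangle_{L^2} = \sum_i \|A_+ u_i\|^2_{L^2},
\]
and since $\X_+ u_i = 0$ implies $V u_i = -\X_- A_+ u_i$, one has $\Tr(\Pi V G V) = \sum_i\langle G\X_-A_+u_i,\X_-A_+u_i\rangle_{L^2}$.

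The final step identifies this difference as a squared projection norm. The decomposition $\Omega_{m+1}\otimes\mc{E} = \ker\X_- \oplus^\bot \overline{\ran\X_+}$ holds because $\X_+$ has closed range (from its left-ellipticity) together with $\X_-^* = -\X_+$. For any $\xi \in \Omega_{m+1}\otimes\mc{E}$ I write $\xi = \pi_{\ker\X_-}\xi + \X_+ \eta$ with the unique $\eta \in (\ker\X_+)^\perp$; then $\X_-\xi = \X_-\X_+\eta = -\Delta_+\eta$, so $G\X_-\xi = -\eta$, and therefore
\[
\langle G\X_-\xi, \X_-\xi\rangle_{L^2} = \langle \eta, \Delta_+\eta\rangle_{L^2} = \|\X_+\eta\|^2_{L^2} = \|\xi - \pi_{\ker\X_-}\xi\|^2_{L^2}.
\]
Applying this with $\xi = A_+u_i$ and summing over $i = 1,\ldots,d$ produces the claimed identity. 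The main obstacle is the bookkeeping in the residue calculation: the three pieces $\Tr(\ddot\Delta_+ \Pi)$, $2\Tr(\dot\Delta_+ \dot\Pi)$ and $\Tr(\Delta_+\ddot\Pi)$ all contribute separately and only produce the clean form $\Tr(\Pi W) - \Tr(\Pi V G V)$ after the partial cancellation of the last two. The geometric identification in the last step—essentially a restatement that $G$ inverts $\Delta_+$ off its kernel—is the only genuinely new ingredient, but it is what makes the second variation manifestly non-negative and locates its zeros inside $\ker\X_-$.
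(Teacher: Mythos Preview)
Your proof is correct and follows essentially the same route as the paper: a contour-integral/Kato expansion of $\lambda_s$ to second order, yielding $\Tr(\Pi W)-\Tr(\Pi VGV)$, followed by the identification of $\X_+ G\X_+^{*}$ (the paper writes this as $Q=-\X_+ R(0)\X_+^{*}$) with the orthogonal projection onto $\ran\X_+$. The only differences are cosmetic---the paper grinds through the residue bookkeeping for $\Tr(\ddot\Delta_+\Pi)+2\Tr(\dot\Delta_+\dot\Pi)+\Tr(\Delta_+\ddot\Pi)$ explicitly rather than citing Kato, and identifies $Q=\pi_{\ran\X_+}$ directly instead of computing $\langle G\X_-\xi,\X_-\xi\rangle=\|\pi_{\ran\X_+}\xi\|^2$---and a harmless overall factor of $2$ (which the paper's own proof also carries relative to the stated lemma).
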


\begin{proof}
This is a rather tedious computation. We have:
\[
\dd^2\lambda_{\Gamma=0}(A,A) = \underbrace{\Tr\left(\ddot{\Delta}_+ \Pi \right)}_{:=\text{(I)}} + 2  \underbrace{\Tr\left(\dot{\Delta}_+\dot{\Pi} \right)}_{:=\text{(II)}} +  \underbrace{\Tr\left( \Delta_+ \ddot{\Pi} \right)}_{:=\text{(III)}}.
\]
Since $\ddot{\Delta}_+ = 2(\dot{\X}_+)^* \dot{\X}_+$, the first term gives:
\[
\text{(I)} = \Tr\left(\ddot{\Delta}_+ \Pi \right) = 2 \Tr((\dot{\X}_+)^* \dot{\X}_+ \Pi) = 2\sum_i \|\dot{\X}_+ u_i\|^2_{L^2} = 2 \sum_i \|A_+ u_i\|^2_{L^2}.
\]
For the second term, we use that
\[
\dot{\Pi} = \Pi \dot{\Delta}_+ R(0) + R(0) \dot{\Delta}_+ \Pi,
\]
where we recall that $R$ is defined by $(z-\Delta_+)^{-1} = \Pi/z + R(z)$. Thus:
\[
\text{(II)} = 2\Tr(\dot{\Delta}_+ \Pi \dot{\Delta}_+ R(0)) + 2 \Tr( \dot{\Delta}_+ R(0) \dot{\Delta}_+ \Pi).
\]
Note that the first term vanishes as $\X_+\Pi = 0$ and $\X_-^* = -\X_+$. And last but not least, we compute the third term. First of all, we have:
\begin{multline*}
\ddot{\Pi}  = 2 \times \dfrac{1}{2\pi i} \int_\gamma (z-\Delta_+)^{-1}  \dot{\Delta}_+  (z-\Delta_+)^{-1} \dot{\Delta}_+  (z-\Delta_+)^{-1} \dd z\\  + \dfrac{1}{2 \pi i} \int_\gamma  (z-\Delta_+)^{-1}\ddot{\Delta}_+  (z-\Delta_+)^{-1} \dd z.
\end{multline*}
This implies after some simplification that:
\[
\begin{split}
\Delta_+ \ddot{\Pi} & = -2 \dot{\Delta}_+  \dfrac{1}{2\pi i} \int_\gamma (z-\Delta_+)^{-1}  \dot{\Delta}_+  (z-\Delta_+)^{-1} \dd z \\
&+ 2 \times  \dfrac{1}{2\pi i} \int_\gamma z (z-\Delta_+)^{-1}  \dot{\Delta}_+  (z-\Delta_+)^{-1} \dot{\Delta}_+  (z-\Delta_+)^{-1} \dd z \\
& - \ddot{\Delta}_+ \Pi \\
& +  \dfrac{1}{2\pi i} \int_\gamma z (z-\Delta_+)^{-1}  \ddot{\Delta}_+  (z-\Delta_+)^{-1} \dd z \\
& = -2(\dot{\Delta}_+\Pi\dot{\Delta}_+ R(0) + \dot{\Delta}_+ R(0) \dot{\Delta}_+ \Pi) \\
& + 2(\Pi \dot{\Delta}_+ \Pi \dot{\Delta}_+ R(0) + \Pi \dot{\Delta}_+ R(0) \dot{\Delta}_+ \Pi + R(0) \dot{\Delta}_+ \Pi \dot{\Delta}_+ \Pi) \\
&  -\ddot{\Delta}_+ \Pi \\
&  + \Pi \ddot{\Delta}_+ \Pi.
\end{split}
\]
It is an elementary computation, using that $\X_+ \Pi = 0$ and $\Pi \X_- = 0$, to show that $\Pi \dot{\Delta}_+ \Pi = 0$. Thus:
\[
\Delta_+ \ddot{\Pi} = -2(\dot{\Delta}_+\Pi\dot{\Delta}_+ R(0) + \dot{\Delta}_+ R(0) \dot{\Delta}_+ \Pi) + 2 \Pi \dot{\Delta}_+ R(0) \dot{\Delta}_+ \Pi -\ddot{\Delta}_+ \Pi + \Pi \ddot{\Delta}_+ \Pi.
\]
Taking the trace, using that $\Pi^2 = \Pi$, we get:
\[
\Tr(\Delta_+ \ddot{\Pi}) = -2 \Tr(\dot{\Delta}_+\Pi\dot{\Delta}_+ R(0)).
\]
Summing the contributions $\text{(I,\, II,\, III)}$, we obtain:
\[
\left. \dfrac{\dd^2}{\dd s^2} \lambda_{sA} \right|_{s=0} =2 \sum_i \|\dot{\X}_+ u_i\|^2_{L^2}  + 2\Tr( \dot{\Delta}_+ R(0) \dot{\Delta}_+ \Pi).
\]
It remains to study this last term. After some computations, one can show that:
\[
\Tr( \dot{\Delta}_+ R(0) \dot{\Delta}_+ \Pi) = \sum_{i=1}^d \langle \dot{\X}_+ u_i, \X_+ R(0) \X_+^* \dot{\X}_+ u_i \rangle.
\]
We now study $Q := - \X_+ R(0) \X_+^*$. Recall that any $f \in C^{\infty}(M,\Omega_{m+1} \otimes \E)$ can be uniquely decomposed as $f = \X_+u + h$, where $h \in \ker \X_-$ and $u \in \ker \Pi$. Applying $(\X_+)^*$, we get $(\X_+)^*f =-\X_-f = (\X_+)^*\X_+u + 0 = \Delta_+ u$. Using the equality $\mathbbm{1}-\Pi = -R(0)\Delta_+$, and the fact that $\X_+ \Pi = 0$, we then obtain that:
\[
\X_+ u = -\X_+ R(0) (\X_+)^*f = Qf.
\]
In other words, $Q = \pi_{\ran(\X_+)}$ is the $L^2$-orthogonal projection on $\ran(\X_+)$. Thus:
\[
\begin{split}
\left. \dfrac{\dd^2}{\dd s^2} \lambda_{sA} \right|_{s=0} & = 2 \sum_i \|\dot{\X}_+ u_i\|^2_{L^2}  - \langle\dot{\X}_+ u_i,\pi_{\ran(\X_+)} \dot{\X}_+ u_i \rangle_{L^2} \\
& = 2 \sum_i \|\pi_{\ker \X_-} \dot{\X}_+ u_i\|^2_{L^2} = 2 \sum_i \|\pi_{\ker \X_-} A_+ u_i\|^2_{L^2}.
\end{split}
\]
\end{proof}

\subsection{Proof of the generic absence of CKTs}

\label{ssection:absence-ckt}

As mentioned in the introduction, Theorem \ref{theorem:main} follows from the following result.

\begin{theorem}
\label{theorem:ckts}
Let $(\mc{E},\nabla^{\mc{E}})$ be Hermitian vector bundle over the Riemannian manifold $(M,g)$, equipped with a smooth unitary connection. Assume that $\ker (\X_+|_{C^\infty(M,\Omega_m \otimes \E)})$ is not trivial. Then, for all $k \geq 2$ and $\eps > 0$, there exists a unitary ${\nabla'}^{\mc{E}}$ such that $\|\nabla^{\mc{E}}-{\nabla'}^{\mc{E}}\|_{C^k(M,T^*M \otimes \mathrm{End}(\mc{E}))} < \eps$ and $(\mc{E},{\nabla'}^{\mc{E}})$ has no twisted CKTs of degree $m$.
\end{theorem}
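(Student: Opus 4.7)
The plan is to apply the perturbation framework of Section \ref{ssection:perturbation}. Let $d := \dim \ker(\X_+|_{C^\infty(M, \Omega_m \otimes \E)}) \geq 1$ and let $(u_1, \dots, u_d)$ be an $L^2$-orthonormal basis of this kernel. I will construct a smooth skew-Hermitian valued $1$-form $A \in C^\infty(M, T^*M \otimes \mathrm{End}_{\mathrm{sk}}(\E))$, of arbitrarily small $C^k$ norm, such that
\[
Q(A) := \sum_{i=1}^d \|\pi_{\ker \X_-} A_+ u_i\|_{L^2}^2 > 0.
\]
Since the first variation of the eigenvalue sum $\lambda_\Gamma$ vanishes (Lemma \ref{lemma:first-variation}) and its second variation equals $Q$ up to a positive factor (Lemma \ref{lemma:second-variation}), one has $\lambda_{sA} > 0$ for all small $s > 0$. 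As $\Delta_+^{sA} \geq 0$ has non-negative spectrum, positivity of the sum $\lambda_{sA}$ of the eigenvalues trapped inside the contour $\gamma$ forces at least one of them to be strictly positive, hence $\dim \ker(\X_+^{\nabla^{\mc{E}} + sA}|_{C^\infty(M, \Omega_m \otimes \E)}) \leq d - 1$.

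The heart of the argument is the construction of such an $A$, which combines an algebraic pointwise step with a microlocal global step, and is the main obstacle. First, choose $x_0 \in M$ and an index $i$ such that $u_i(x_0) \neq 0$, and an index $k_0$ in a local orthonormal frame $(e_1, \dots, e_r)$ of $\E$ near $x_0$ such that the $k_0$-th component $u_{i, k_0}(x_0)$ is non-zero. Dualizing Lemma \ref{lemma:algebraic-surjectivity-trivial-bundle} (the adjoint of $\Gamma_-$ is proportional to $\Gamma_+$ after shifting the index $m$), for any non-zero real $\eta \in T^*_{x_0} M$ the ``plus'' part $\eta_+ : \mathbf{H}_m \to \mathbf{H}_{m+1}$ of multiplication by $\eta$ is injective; therefore, setting $A_0 := \eta \otimes i P_{k_0}$ where $P_{k_0} \in \mathrm{End}(\E_{x_0})$ is the orthogonal projection onto $e_{k_0}$ (so that $iP_{k_0}$ is skew-Hermitian), one obtains $(A_0)_+ u_i(x_0) = i\,\eta_+ u_{i, k_0}(x_0) \otimes e_{k_0} \neq 0$. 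Extend $A_0$ smoothly in the local frame to a smooth $A$ near $x_0$; by continuity, $A_+ u_i$ remains non-zero on a small ball around $x_0$. Now apply Lemma \ref{lemma:uniform-x-} (which is where the hypothesis $n \geq 3$ is used) combined with Lemma \ref{lemma:microlocal-surjectivity}: the evaluation map $\mathrm{ev}_{x_0} : \ker \X_-|_{H^s(M, \Omega_{m+1} \otimes \E)} \to \Omega_{m+1} \otimes \E(x_0)$ is surjective for $s > n/2$, so pick $h \in \ker \X_-$ of Sobolev regularity $s$ (hence continuous) with $h(x_0) = A_+ u_i(x_0)$. Then $x \mapsto \langle A_+ u_i(x), h(x) \rangle$ is continuous in $x$, strictly positive at $x_0$, and therefore on some ball $B(x_0, \delta)$. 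Multiplying $A$ by a non-negative cutoff $\chi \in C^\infty_c(B(x_0, \delta))$ with $\chi(x_0) = 1$ yields $A' := \chi A$ satisfying
\[
\langle A'_+ u_i, h \rangle_{L^2} = \int_M \chi(x) \langle A_+ u_i(x), h(x) \rangle \, \dd \vol(x) > 0,
\]
so $\pi_{\ker \X_-} A'_+ u_i \neq 0$ and consequently $Q(A') > 0$. Since $A'$ can be rescaled, its $C^k$ norm can be made arbitrarily small.

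The theorem then follows by iteration. After a first perturbation $s_1 A'_1$ with $\|s_1 A'_1\|_{C^k} < \eps/d$, the new connection has at most $d - 1$ CKTs of degree $m$; applying the same construction to this new connection (and its own CKTs) reduces the kernel dimension further, and at most $d$ such steps suffice to arrive at a connection ${\nabla'}^{\mc{E}}$ with $\|\nabla^{\mc{E}} - {\nabla'}^{\mc{E}}\|_{C^k} < \eps$ and no CKTs of degree $m$. At each step the smallness of the perturbation guarantees, by upper semi-continuity of $\dim \ker \X_+^\Gamma$ (a standard consequence of the ellipticity of $\X_+$), that the bound on the dimension obtained in the previous step is preserved before the next perturbation drops it further. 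The crux of the whole argument remains the construction in the second paragraph: the deduction of a \emph{global} positivity of $Q$ from the \emph{pointwise} non-vanishing of $A_+ u_i$ is exactly what the uniform-divergence-type property of $\X_-$ is designed to provide, and it fails in dimension $n = 2$ where $\X_-$ is elliptic and its kernel is finite-dimensional.
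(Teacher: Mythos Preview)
Your proof is correct and follows essentially the same strategy as the paper: reduce to producing a perturbation $A$ with $\pi_{\ker \X_-} A_+ u_i \neq 0$ via the second variation formula (Lemma~\ref{lemma:second-variation}), and obtain this from the uniform divergence type property of $\X_-$ (Lemmas~\ref{lemma:uniform-x-} and~\ref{lemma:microlocal-surjectivity}) together with the pointwise algebra of $\Gamma_\pm$, then iterate. The only cosmetic difference is that you argue directly (using injectivity of $\eta_+$, the dual of Lemma~\ref{lemma:algebraic-surjectivity-trivial-bundle}, to make $A_+u_i(x_0)\neq 0$ and then pair with a suitable $h\in\ker\X_-$), whereas the paper argues by contradiction (assuming $\langle u_0,\Gamma_- w\rangle_{L^2}=0$ for all $\Gamma,w$, localizing, and invoking Lemma~\ref{lemma:algebraic-surjectivity}); the ingredients and the role of the $n\geq 3$ hypothesis are identical.
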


Note that the definition of the $C^k$ norms (which may depend on some choice of coordinates) is irrelevant.

\begin{proof}
Assume that $\ker \X_+|_{C^\infty(M,\Omega_m \otimes \E)}$ is $d$-dimensional. Consider a small circle around $0$ in $\C$ in which $0$ is the only eigenvalue (with multiplicity $d$). It is sufficient to prove that we can produce an arbitrary small perturbation $\nabla^{\mc{E}}+\Gamma$ such that the sum of the eigenvalues inside the circle is strictly positive. Since the Laplacians $\Delta_+^\Gamma \geq 0$ are self-adjoint and non-negative, this means that at least one of the eigenvalues at $0$ was ejected, namely $\ker  \X^\Gamma_+|_{C^\infty(M,\Omega_m \otimes \E)}$ is at most $d-1$ dimensional. Then, repeating the process finitely many times, one can eject all the resonances out of $0$, i.e. one obtains a connection $\nabla^{\mc{E}}+\Gamma$, where $\Gamma$ is arbitrarily small (in $C^k$) such that $\X^\Gamma_+$ has no eigenvalues at $0$.

Now, using Lemma \ref{lemma:second-variation}, in order to produce a perturbation $\nabla^{\mc{E}}+\Gamma$ of the connection $\nabla^{\mc{E}}$ such that the sum of the eigenvalues of $\Delta^{\Gamma}_+$ inside the circle is strictly positive, it is sufficient to take $\nabla^{\mc{E}}+sA$ (where $A$ is $C^k$), for $s$ small enough and where $\pi_{\ker \X_-} A_+ u_i \neq 0$ (for some $i \in \left\{1,...,d\right\}$). Therefore, this boils down to the following result:

\begin{lemma}\label{lemma:Gammaexistence}
Assume $u_0 \in \ker (\X_+|_{C^\infty(M,\Omega_m \otimes \E)})$. Then, there exists $\Gamma \in C^\infty(M,T^*M \otimes \mathrm{End}_{\sk}(\mc{E}))$ such that $\pi_{\ker \X_-} \Gamma_+ u_0 \neq 0$.
\end{lemma}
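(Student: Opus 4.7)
The plan is to pick a point $x_0 \in M$ where $u_0(x_0) \neq 0$, to construct an algebraic perturbation $\tilde\Gamma_0 \in T_{x_0}^*M \otimes \mathrm{End}_{\sk}(\mc{E}_{x_0})$ such that $(\tilde\Gamma_0)_+ u_0(x_0) \neq 0$, and then to localize it into a smooth global $\Gamma$ whose effect $\Gamma_+ u_0$ is detected in $L^2$ against an element of $\ker\X_-$ supplied by the microlocal surjectivity of Lemma~\ref{lemma:microlocal-surjectivity}.

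First, I would choose $x_0$ with $u_0(x_0) \neq 0$ and, in a local orthonormal frame of $\mc{E}$, write $u_0(x_0) = \sum_{k=1}^r u_k \otimes e_k$ with some component $u_k$ non-zero; after relabeling, take $u_1 \neq 0$ in $\mathbf{H}_m(T_{x_0}M) \simeq \Omega_m(x_0)$. For any non-zero real linear form $\mu \in T_{x_0}^*M$, the choice $\tilde\Gamma_0 := i\mu \otimes E_{11}$ is skew-Hermitian (since $E_{11}$ is Hermitian and $\mu$ is real). By the proof of Lemma~\ref{lemma:multiplication-1-form}, the top harmonic part of $(\tilde\Gamma_0 u_0)(x_0)$ is
\[
\alpha := \bigl(i\mu \cdot u_1 - |v|^2 b\bigr)\otimes e_1 \in \mathbf{H}_{m+1}(T_{x_0}M)\otimes\mc{E}_{x_0}, \qquad b := \frac{\nabla(i\mu)\cdot\nabla u_1}{n+2(m-1)}.
\]
To see $\alpha \neq 0$ I would argue that $|v|^2$ is irreducible in $\C[v_1,\dotsc,v_n]$ when $n\geq 3$, that $\mu$ is not divisible by $|v|^2$ for degree reasons, and that $u_1 \in \mathbf{H}_m$ is not divisible by $|v|^2$ (from the direct sum $\mathbf{P}_m = \mathbf{H}_m \oplus |v|^2\mathbf{P}_{m-2}$); hence $|v|^2 \nmid \mu u_1$ and $\alpha \neq 0$.

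Next, for $s > n/2$, Lemma~\ref{lemma:uniform-x-} combined with Lemma~\ref{lemma:microlocal-surjectivity} yields $w \in \ker\X_-|_{H^s(M,\Omega_{m+1}\otimes\mc{E})}$ with $w(x_0) = \alpha$; by Sobolev embedding $w \in C^0$. I would extend $\tilde\Gamma_0$ smoothly to $\tilde\Gamma$ on a neighborhood $U$ of $x_0$ via a local frame of $\mc{E}$ and local coordinates, pick a non-negative cutoff $\chi \in C_c^\infty(U)$ with $\chi(x_0) > 0$, and set $\Gamma := \chi\tilde\Gamma \in C^\infty(M, T^*M\otimes\mathrm{End}_{\sk}(\mc{E}))$. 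Pointwise linearity of $\Gamma \mapsto \Gamma_+$ in $\Gamma$ gives $\Gamma_+ u_0 = \chi \cdot \tilde\Gamma_+ u_0$, so
\[
\langle \Gamma_+ u_0, w\rangle_{L^2} = \int_M \chi(x)\,\langle \tilde\Gamma_+ u_0(x), w(x)\rangle\,\dd\vol(x).
\]
The integrand at $x_0$ equals $\|\alpha\|^2 > 0$ and is continuous, so shrinking $\supp\chi$ into a small enough neighborhood of $x_0$ makes the pairing strictly positive; this forces $\pi_{\ker\X_-}\Gamma_+ u_0 \neq 0$. The delicate step is the use of \emph{uniform} divergence type in the surjectivity of $\mathrm{ev}_{x_0}$: without it, Lemma~\ref{lemma:microlocal-surjectivity} only gives surjectivity onto $\Sigma_{|\xi|=1}\ker\sigma_{\X_-}(x_0,\xi)$, a proper subspace that could a priori miss the algebraic target $\alpha$—precisely the obstruction that appears in $n=2$.
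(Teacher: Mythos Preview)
Your proof is correct and takes a genuinely different route from the paper's. The paper argues by contradiction: assuming $\pi_{\ker\X_-}\Gamma_+ u_0 = 0$ for every $\Gamma$, it deduces (via the adjunction $(\Gamma_+)^* = -\Gamma_-$ and a localization $\varphi_h \to \delta_{x_0}$) that $\langle u_0(x),\Gamma_-(x)w(x)\rangle = 0$ pointwise for all $\Gamma$ and all $w \in \ker\X_-$, then invokes the algebraic surjectivity Lemma~\ref{lemma:algebraic-surjectivity} (every element of $\Omega_m\otimes\mc{E}$ is $\Gamma_- w$ for some $\Gamma,w$) together with the surjectivity of $\mathrm{ev}_x$ to force $u_0 \equiv 0$. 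You instead work directly: you build a specific $\Gamma$ at a single point with $\Gamma_+ u_0(x_0) = \alpha \neq 0$, produce $w \in \ker\X_-$ hitting $\alpha$ at $x_0$, and localize with a small cutoff to make $\langle \Gamma_+ u_0, w\rangle_{L^2} > 0$. The algebraic input also differs: in place of Lemma~\ref{lemma:algebraic-surjectivity} you use the irreducibility of $|v|^2$ in $\C[v_1,\dots,v_n]$ for $n\geq 3$ to see that $\Gamma_+$ is injective on each harmonic component---this is essentially the adjoint statement to Lemma~\ref{lemma:algebraic-surjectivity-trivial-bundle}, proved by a different method. Your approach is more constructive and self-contained; the paper's is more modular, since isolating the algebra into Lemma~\ref{lemma:algebraic-surjectivity} makes the pattern reusable (a variant reappears in the endomorphism-bundle argument of \S\ref{sssection:absence-ckts-endomorphism}). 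Both rest on the same indispensable input: the uniform divergence type of $\X_-$ and the resulting surjectivity of $\mathrm{ev}_{x_0}$ from Lemma~\ref{lemma:microlocal-surjectivity}, which as you correctly note is exactly what fails when $n=2$.
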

\begin{proof}
Assume this is not the case. Then, using the splitting
\[
C^\infty(M,\Omega_{m+1} \otimes \mc{E}) = \ker \X_-|_{C^\infty(M,\Omega_{m+1}\otimes \mc{E})} \oplus^\bot \X_+(C^\infty(M,\Omega_m\otimes \mc{E})),
\]
we obtain that for all $\Gamma \in C^\infty(M,T^*M \otimes \mathrm{End}_{\sk}(\mc{E}))$, there exists $f_\Gamma \in C^\infty(M,\Omega_m \otimes \mc{E})$ such that $\Gamma_+ u_0 = \X_+ f_\Gamma$. Thus, for all $w \in \ker \X_-|_{C^\infty(M,\Omega_{m+1} \otimes \mc{E})}$:
\[
\langle \Gamma_+ u_0, w \rangle_{L^2} = \langle \X_+ f_\Gamma,w \rangle_{L^2} = - \langle f_\Gamma, \X_- w \rangle_{L^2} = 0 = \langle u_0, \Gamma_- w \rangle_{L^2}.
\]
We claim that this implies that pointwise in $x \in M$, one has $0 = \langle u_0(x), \Gamma_-(x) w(x) \rangle_{\Omega_m(x) \otimes \mc{E}_x}$, for all $\Gamma\in C^\infty(M,T^*M \otimes \mathrm{End}_{\sk}(\mc{E}))$ and for all $w \in \ker \X_-|_{C^\infty(M,\Omega_{m+1} \otimes \mc{E})}$. Assuming the claim, we then use that $\X_-$ is of uniform divergence type (as proved in Lemma \ref{lemma:uniform-x-}): by Lemma \ref{lemma:microlocal-surjectivity}, it implies that the map 
\[
\ker \X_-|_{C^\infty(M,\Omega_{m+1}\otimes \mc{E})} \ni w \mapsto w(x) \in \Omega_{m+1}(x) \otimes \mc{E}_x
\]
is surjective for all $x \in M$. We then apply Lemma \ref{lemma:algebraic-surjectivity} which allows to find $\Gamma$ and $w$ such that $\Gamma_-(x)w(x) = u_0(x)$. Thus $\langle u_0(x), u_0(x) \rangle = 0$ for all $x \in M$ that is $u_0 = 0$. This is a contradiction.

It now remains to prove that pointwise in $x \in M$, we have $0 = \langle u_0(x), \Gamma_-(x) w(x) \rangle_{\Omega_m(x) \otimes \mc{E}_x}$. We fix $x_0 \in M$ and consider an arbitrary $w \in \ker \X_-|_{C^\infty(M,\Omega_{m+1} \otimes \mc{E})}, \Gamma \in C^\infty(M,T^*M \otimes \mathrm{End}_{\sk}(\mc{E}))$. We consider a sequence of (real-valued) functions $\varphi_h \in C^\infty(M)$ such that $\varphi_h \rightarrow_{h \rightarrow 0} \delta_{x_0} \in \mc{D}'(M)$, where the convergence takes place in the sense of distributions, i.e. we have $\langle \varphi_h, f \rangle_{L^2(M)} \rightarrow_{h \rightarrow 0} f(x_0)$, for all $f \in C^\infty(M)$. Then:
\[
\begin{split}
0 = \langle u_0, (\varphi_h \Gamma)_- w \rangle_{L^2} & = \int_{M} \varphi_h(x) \langle u_0(x), \Gamma_-(x) w(x) \rangle_{\Omega_m(x) \otimes \mc{E}_x} \dd \vol_g(x)\\
&  \rightarrow_{h \rightarrow 0} \langle u_0(x_0), \Gamma_-(x_0) w(x_0) \rangle_{\Omega_m(x_0) \otimes \mc{E}_{x_0}}.
\end{split}
\]

\end{proof}

This concludes the proof of the main Theorem.
\end{proof}

In the case of surfaces the operator $\mathbf{X}_-$ is not of uniformly divergent type, but we may still perturb the CKTs in some cases. Let $(\E,\nabla^{\E})$ and $(M,g)$ be as in Theorem \ref{theorem:main} and assume $\dim M = 2$ with $M$ orientable of genus $\mathbf{g}$. As $M$ admits a complex structure, we may consider $(T^*M)^{0, 1} =: \mathcal{K}$ the canonical bundle spanned locally by the forms $dz$ and analogously $\mathcal{K}^{-1} := (T^*M)^{1, 0}$ locally spanned by $d\bar{z}$. One checks that $\otimes^{m}_ST^*M|_{0-\Tr} = \mathcal{K}^{\otimes m} \oplus \mathcal{K}^{\otimes (-m)}$ and using the map $\pi^*_m$ there is a splitting for each $m \neq 0$: $\Omega_m = H_m \oplus H_{-m}$. We write $\Omega_0 = H_0$. The operators $\mathbf{X}_{\pm}$ for $m > 0$ decompose as $\mathbf{X}_{\pm}|_{\Omega_m \otimes \E} = \mu_+ \oplus \mu_-$, where $\mu_{\pm}: H_m \otimes \E \to H_{m \pm 1} \otimes \E$ for any $m$ (see e.g. \cite{Paternain-09} for details). Generalising our earlier approach, we obtain for surfaces:

\begin{proposition}\label{prop:n=2}
	Let $m > 0$. If $\ker \mathbf{X}_-|_{\Omega_{m + 1} \otimes \E} \neq \{0\}$, there is a perturbation that reduces the dimension of $\ker \mathbf{X}_+|_{\Omega_m \otimes \E}$ by at least one. Consequently, if the index $\ind \mu_+|_{H_{m} \otimes \E} \leq 0$, then it is possible to perturb the connection to eject all the twisted CKTs; if $\ind \mu_+|_{H_{m} \otimes \E} > 0$, then if necessary we may perturb the connection to obtain
	\[\dim \ker \mu_+|_{H_m \otimes \E} = \ind \mu_+|_{H_{m} \otimes \E}.\]
\end{proposition}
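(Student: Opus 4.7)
The plan is to mimic the scheme of Theorem \ref{theorem:ckts} and Lemma \ref{lemma:Gammaexistence}, via the second variation formula of Lemma \ref{lemma:second-variation}
\[
\dd^2\lambda_{\Gamma=0}(A,A) = 2\sum_i \|\pi_{\ker \mathbf{X}_-}\, A_+ u_i\|_{L^2}^2,
\]
where $(u_i)$ is an orthonormal basis of $\ker \mathbf{X}_+|_{\Omega_m \otimes \mc{E}}$. The main obstacle is that in dimension $2$ the operator $\mathbf{X}_-$ fails to be of uniform divergence type (Lemma \ref{lemma:uniform-x-}), so the microlocal argument underlying Lemma \ref{lemma:Gammaexistence} breaks down. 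The hypothesis $\ker \mathbf{X}_-|_{\Omega_{m+1}\otimes \mc{E}}\neq\{0\}$ will be used directly, through a concrete non-zero element $w$, together with the fact that both $\mu_+$ and $\mu_-$ are now elliptic (so that $u_0$ and $w$ below satisfy unique continuation).

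Since $\mathbf{X}_\pm$ preserves the splitting $\Omega_m = H_m \oplus H_{-m}$, I would first reduce to a single chirality, say the $+$ one. I would then assume both $\ker \mu_+|_{H_m \otimes \mc{E}}$ and $\ker \mu_-|_{H_{m+1}\otimes \mc{E}}$ non-trivial, pick $0\neq u_0$ in the former and $0\neq w$ in the latter, and choose $x_0 \in M$ with $u_0(x_0)\neq 0$, $w(x_0) \neq 0$ (possible by unique continuation). The key local algebraic observation is that, using $T^*M \otimes \C = \mathcal{K} \oplus \mathcal{K}^{-1}$, only the $\mathcal{K}^{-1}$-part of $\Gamma$ contributes to $\Gamma_- w \in H_m \otimes \mc{E}$, and multiplication by any nonzero $\beta \in \mathcal{K}^{-1}(x_0)$ is an isomorphism $H_{m+1}(x_0) \xrightarrow{\sim} H_m(x_0)$ of complex lines. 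Hence, choosing $\Gamma(x_0) = \beta \otimes B$ with $B \in \mathrm{End}_{\mathrm{sk}}(\mc{E}_{x_0})$, the problem of producing $\langle u_0(x_0), \Gamma_-(x_0) w(x_0)\rangle \neq 0$ reduces to finding a skew-Hermitian $B$ with $\langle U, BW\rangle_{\mc{E}_{x_0}} \neq 0$, where $U, W \in \mc{E}_{x_0}$ are the fibre parts of $u_0(x_0), w(x_0)$; this is elementary linear algebra (cf.\ Lemma \ref{lemma:algebraic-surjectivity}).

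A cut-off argument around $x_0$, identical to the end of the proof of Lemma \ref{lemma:Gammaexistence}, would then promote this pointwise choice into a smooth global $\Gamma$ with $\langle u_0, \Gamma_- w\rangle_{L^2}\neq 0$, so that $\pi_{\ker \mu_-|_{H_{m+1}}}(\Gamma_+ u_0) \neq 0$ and $\dd^2\lambda_0(\Gamma,\Gamma) > 0$; consequently $\nabla^{\mc{E}} + s\Gamma$ strictly reduces $\dim \ker \mu_+|_{H_m \otimes \mc{E}}$ for small $s$, proving the first assertion (the $-$ chirality is symmetric). For the consequences, the Fredholm identity $\dim \coker \mu_+|_{H_m\otimes \mc{E}} = \dim \ker \mu_-|_{H_{m+1}\otimes \mc{E}}$ shows that this reduction can be iterated exactly as long as $\dim \ker \mu_+|_{H_m\otimes\mc{E}} > \ind \mu_+|_{H_m\otimes \mc{E}}$: when $\ind \mu_+|_{H_m \otimes \mc{E}} \leq 0$ the kernel is driven all the way to $0$, whereas when $\ind \mu_+|_{H_m \otimes \mc{E}} > 0$ it is driven down to the topological lower bound $\ind \mu_+|_{H_m\otimes\mc{E}}$.
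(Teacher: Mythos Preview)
Your proposal is correct and follows essentially the same route as the paper: reduce to one chirality, exploit the local isothermal/complex structure to see that only the $(0,1)$-part of $\Gamma$ contributes to $\Gamma_-$, invoke unique continuation for the elliptic operators $\mu_\pm$, and finish via the index relation $\ind\mu_+|_{H_m\otimes\E}=-\ind\mu_-|_{H_{m+1}\otimes\E}$. The paper organizes the core step as a contradiction (deriving the pointwise identity $\langle u_0(x),\Gamma_-(x)w(x)\rangle=0$ for all $x$ and all $\Gamma$, then violating it on the dense set $\{w\neq 0\}$), whereas you construct a single $\Gamma$ directly near a common nonvanishing point of $u_0$ and $w$; these are equivalent.

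One small imprecision worth cleaning up: writing ``$\Gamma(x_0)=\beta\otimes B$ with $\beta\in\mathcal{K}^{-1}$ and $B\in\mathrm{End}_{\mathrm{sk}}$'' does not literally define a real one-form with skew-Hermitian values---what you are really prescribing is the $(0,1)$-component $\Gamma(\partial_{\bar z})\,d\bar z$, with the $(1,0)$-part then fixed by the reality/skew-Hermitian constraint. In fact $\Gamma(\partial_{\bar z})=\tfrac12(A_1+iA_2)$ can be an \emph{arbitrary} endomorphism (the paper uses this freedom by taking it invertible, so that $\Gamma_-(x)$ becomes an isomorphism $H_{m+1}(x)\otimes\E_x\to H_m(x)\otimes\E_x$); your restriction to skew-Hermitian $B$ still suffices for the linear-algebra step but is not needed.
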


\begin{proof}
	We first consider an equivalent of Lemma \ref{lemma:algebraic-surjectivity} for the case of surfaces. In local isothermal coordinates $g = e^{2\lambda} |dz|^2$, we may write the connection form as $\Gamma = \Gamma(\partial_z) dz + \Gamma(\partial_{\bar{z}}) d\bar{z}$; set $\Gamma_- = \Gamma(\partial_{\bar{z}}) d\bar{z}$. Then $\Gamma_-: H_m \otimes \E \to H_{m-1} \otimes \E$ is given by 
	\[\Gamma_- (e^{m \lambda} (dz)^m \otimes s) = e^{(m-1)\lambda} (dz)^{m-1} \otimes \Gamma(e^{-\lambda} \partial_{\bar{z}}) s.\]
	Therefore as soon as $\Gamma(\partial_{\bar{z}})(x): \E_x \to \E_x$ is invertible, we have $\Gamma_-(x) : H_m(x) \otimes \E_x \to H_{m-1}(x) \otimes \E_x$ an isomorphism.
	
	Coming back to the main proof, we may without loss of generality assume that $u_0 \in C^\infty(M; H_m \otimes \E)$. Arguing by contradiction as in the proofs of Theorem \ref{theorem:ckts} and Lemma \ref{lemma:Gammaexistence} shows $\langle{u_0(x), \Gamma_-(x)w(x)}\rangle_{\Omega_m(x) \otimes \E_x} = 0$ for all $x\in M$, $\Gamma \in C^\infty(M; T^*M \otimes \End_{\sk} \E)$ and $w \in \ker \mu_-|_{H_{m + 1} \otimes \E}$. Assume $0 \neq w \in \ker \mu_-|_{H_{m + 1} \otimes \E}$. Since $\mu_-$ is elliptic, $\{w = 0\} \subset M$ is nowhere dense by the unique continuation principle. Picking a suitable $\Gamma$ according to the previous paragraph, we obtain $u_0 = 0$ on $\supp (w)$, thus $u_0 \equiv 0$, contradiction. Therefore to reduce the dimension of $\ker \mathbf{X}_+|_{\Omega_m \otimes \E}$ by at least one, it suffices to produce a single non-trivial element in $\ker \mu_-|_{H_{m + 1} \otimes \E}$, proving the first part of the claim.
	
	If $\eta_\pm$ denote the raising/lowering operators for $\E = \mathbb{C}$, then by \eqref{eq:D*symbol} and \eqref{equation:link-xd} $\sigma_{\mu_\pm}(x, \xi) = \sigma_{\eta_\pm}(x, \xi) \otimes \id_{\E}$, so the value of $\ind \mu_-|_{H_{m+1} \otimes \E}$ is topological.\footnote{By the Atiyah-Singer index theorem, it may be computed explicitly as a function of the first Chern class $c_1(\E)$ and $\ind \eta_-|_{H_{m+1}} = (2 m  + 1)(\mathbf{g} - 1)$ (for the latter see \cite[Lemma 2.1]{Paternain-Salo-Uhlmann-14-2}).} Note that $\mu_+^* = -\mu_-$ implies $\ind \mu_+|_{H_m \otimes \E} = - \ind \mu_-|_{H_{m + 1} \otimes \E}$. Thus by the previous paragraph, as long as $\ker \mu_-|_{H_{m + 1}\otimes \E} \neq \{0\}$, an inductive argument ejecting the eigenvalues one by one shows that we may reduce the dimension of $\ker \mu_+|_{H_m \otimes \E}$ to $\max\{0, \ind \mu_+|_{H_m \otimes \E}\}$, completing the proof.

\end{proof}

\subsection{Generic absence of CKTs on the endomorphism bundle}

\label{ssection:absence-ckts-endomorphism}

From now on, we assume that $(M,g)$ has no non-trivial CKTs on its trivial line bundle. Given a unitary connection $\nabla^{\E}$ on $\E \rightarrow M$, recall that it induces a canonical unitary connection $\nabla^{\mathrm{End}(\E)}$ on $\mathrm{End}(\mc{E})$, defined by the following (Leibniz) identity, holding for all $u \in C^\infty(\M,\mathrm{End}(\mc{E})), f \in C^\infty(M,\mc{E})$:
\begin{equation}
\label{equation:connection-end}
\nabla^{\mc{E}}\left(u(f)\right) = (\nabla^{\mathrm{End}(\E)}u)(f) + u(\nabla^{\mc{E}}f).
\end{equation}
In particular, if ${\nabla'}^{\mc{E}} = \nabla^{\mc{E}} + \Gamma$ (with $\Gamma \in C^\infty(\M,\mathrm{End}(\mc{E}))$) is another connection on $\mc{E}$, then one has the well-known formula: 
\begin{equation}
\label{equation:diff-connection-end}
{\nabla'}^{\mathrm{End}(\E)} = \nabla^{\mathrm{End}(\E)} + [\Gamma, \cdot].
\end{equation}
In this subsection, we study the absence of CKTs on the endomorphism bundle $\mathrm{End}(\E)$ equipped with $\nabla^{\mathrm{End}(\E)}$ and prove Theorem \ref{theorem:generic-ckts-endomorphism}, namely that there are generically no non-trivial CKTs. (Observe from the definition that $\nabla^{\mathrm{End}(\E)} \mathbbm{1}_{\mc{E}} = 0$ and this is a CKT of order $0$, so unlike the general case of $\nabla^{\mc{E}}$ acting on sections of $\mc{E}$, we will never be able to exclude all the CKTs of order $0$ on the endomorphism bundle.) 

\subsubsection{Preliminary discussion}

We need to clarify notations here. Given an element $u \in C^\infty(M,\otimes^m_S T^*M \otimes \mathrm{End}(\E))$, there are two notions of traces which appear, the one on symmetric tensors and the one on the endomorphism part. Consider a fixed point $x_0 \in M$ and let $e_1,...,e_r$ be an orthonormal frame for $\mc{E}$ in a neighborhood of $x_0$. Then $(e_i^* \otimes e_j)_{i,j=1}^r$ is a (local) orthonormal frame for $\mathrm{End}(\E)$ and we can write $u = \sum_{i,j=1}^r u_{ij} \otimes (e_i^* \otimes e_j)$, where $u_{ij} \in C^\infty(M,\otimes^m_S T^*M)$. We introduce the trace on the symmetric part
\[
\Tr_{\mathrm{Sym}} : C^\infty(M, \otimes^{m}_S T^*M \otimes  \mathrm{End}(\E)) \rightarrow C^\infty(M, \otimes^{m-2}_S T^*M \otimes  \mathrm{End}(\E)),
\]
defined by the following equality:
\begin{equation}
\label{equation:trace-symmetric}
\Tr_{\mathrm{Sym}}(u) :=  \sum_{i,j=1}^r \mc{T}(u_{ij}) \otimes (e_i^* \otimes e_j) \in C^\infty(M, \otimes^{m-2}_S T^*M \otimes  \mathrm{End}(\E)),
\end{equation}
where the trace $\mc{T}$ on the right-hand side is the one introduced in \eqref{equation:trace-tensor}. We also introduce the trace on the endomorphism as a map
\[
\Tr_{\mathrm{End}} : C^\infty(M, \otimes^{m}_S T^*M \otimes  \mathrm{End}(\E)) \rightarrow C^\infty(M, \otimes^{m}_S T^*M),
\]
defined by:
\begin{equation}
\label{equation:trace-endomorphism}
\Tr_{\mathrm{End}} (u) = \sum_{i,j=1}^r u_{ij} \otimes \Tr(e_i^* \otimes e_j) = \sum_{i=1}^r u_{ii}.
\end{equation}

In the following, we will keep the notation $\nabla^{\mathrm{End}(\E)}$ for the connection on $\mathrm{End}(\E)$ and, in order to be consistent with \S\ref{ssection:twisted-tensors}, we will write $D_{\mathrm{End}(\E)}$ for the symmetric derivatives acting as
\[
D_{\mathrm{End}(\E)} : C^\infty(M,\otimes^m_S T^*M \otimes \mathrm{End}(\E)) \rightarrow C^\infty(M,\otimes^{m+1}_S T^*M \otimes \mathrm{End}(\E)),
\]
as introduced in \eqref{equation:formula-de}. Also recall that $\mc{P}$ denotes the orthogonal projection onto trace-free symmetric tensors. We then have the following lemma:

\begin{lemma}
\label{lemma:commutation-trace}
For all $u \in C^\infty(M,\otimes^m_S T^*M|_{0-\Tr} \otimes \mathrm{End}(\E))$, we have:
\[
\Tr_{\mathrm{End}}\mc{P} D_{\mathrm{End}(\E)} u =  \mc{P} D \Tr_{\mathrm{End}} u.
\]
\end{lemma}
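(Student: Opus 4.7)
The plan is to reduce the identity to two independent commutation facts: that $\Tr_{\mathrm{End}}$ commutes with the scalar projection $\mc{P}$ (because they act on different tensor factors), and that $\Tr_{\mathrm{End}}$ intertwines $D_{\mathrm{End}(\E)}$ with the Levi-Civita symmetric derivative $D$. Working in a local orthonormal frame $(e_1,\dotsc,e_r)$ of $\E$ around a point $x_0$, write $u=\sum_{k,l}u_{kl}\otimes(e_k^*\otimes e_l)$ with $u_{kl}\in C^\infty(M,\otimes^m_ST^*M)$. Since the whole argument is local, I will work in this frame throughout.

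For the commutation with $\mc{P}$: by definition $\mc{P}u=\sum_{k,l}(\mc{P}u_{kl})\otimes(e_k^*\otimes e_l)$ and $\Tr_{\mathrm{End}}u=\sum_k u_{kk}$, so both expressions
\[
\Tr_{\mathrm{End}}\mc{P}u=\sum_k\mc{P}u_{kk},\qquad \mc{P}\Tr_{\mathrm{End}}u=\mc{P}\sum_k u_{kk}
\]
coincide by linearity of $\mc{P}$. The nontrivial step is the second identity, namely
\[
\Tr_{\mathrm{End}}\,D_{\mathrm{End}(\E)}u=D\,\Tr_{\mathrm{End}}u.
\]
Writing $\nabla^{\E}=d+\Gamma$ with $\Gamma$ a skew-Hermitian one-form in the chosen frame, the defining property \eqref{equation:connection-end} gives $\nabla^{\mathrm{End}(\E)}=d+[\Gamma,\cdot]$ on $\mathrm{End}(\E)$-valued sections. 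A short direct computation (or one based on the formula $\nabla^{\mathrm{End}(\E)}(e_k^*\otimes e_l)=\sum_p\Gamma^p_l(e_k^*\otimes e_p)-\sum_m\Gamma^k_m(e_m^*\otimes e_l)$) shows that $\Tr_{\mathrm{End}}\nabla^{\mathrm{End}(\E)}(e_k^*\otimes e_l)=\Gamma^k_l-\Gamma^k_l=0$; this is just the statement that the trace of a commutator is zero. Hence, applying $\nabla^{\mathrm{End}(\E)}$ componentwise to $u$ (with the Levi-Civita piece acting on the scalar symmetric tensors $u_{kl}$), we obtain
\[
\Tr_{\mathrm{End}}\nabla^{\mathrm{End}(\E)}u=\sum_k\nabla u_{kk}=\nabla\,\Tr_{\mathrm{End}}u,
\]
and symmetrizing yields $\Tr_{\mathrm{End}}D_{\mathrm{End}(\E)}u=D\,\Tr_{\mathrm{End}}u$, as the symmetrization operator $\mc{S}$ obviously commutes with $\Tr_{\mathrm{End}}$.

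Combining the two observations and applying them to $u\in C^\infty(M,\otimes^m_ST^*M|_{0-\Tr}\otimes\mathrm{End}(\E))$ gives
\[
\Tr_{\mathrm{End}}\mc{P}D_{\mathrm{End}(\E)}u=\mc{P}\,\Tr_{\mathrm{End}}D_{\mathrm{End}(\E)}u=\mc{P}D\,\Tr_{\mathrm{End}}u,
\]
which is exactly the desired identity. The only potential obstacle is the local frame computation of $\nabla^{\mathrm{End}(\E)}$ on basis endomorphisms $e_k^*\otimes e_l$, but this is completely algebraic and yields the vanishing of the trace by the commutator identity $\Tr[\Gamma,\cdot]=0$, so there is no real difficulty.
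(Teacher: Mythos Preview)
Your proof is correct and follows essentially the same approach as the paper: both arguments first note that $\Tr_{\mathrm{End}}$ commutes with $\mc{P}$, and then use the vanishing $\Tr_{\mathrm{End}}[\Gamma,\cdot]=0$ to reduce $D_{\mathrm{End}(\E)}$ to the Levi-Civita symmetric derivative $D$ after taking the endomorphism trace. The paper phrases the second step as ``independence of the choice of connection'' while you carry out the local frame computation directly, but the content is identical.
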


Note that there is an abuse of notations in the previous equality insofar as the operator $\mc{P}$ appearing on both sides is not exactly the same.

\begin{proof}
First of all, it is straightforward that $\Tr_{\mathrm{End}}\mc{P} = \mc{P} \Tr_{\mathrm{End}}$. Then, observe that the left-hand side in Lemma \ref{lemma:commutation-trace} is independent of any choice of connection. Indeed, for the connection $\nabla^{\mc{E}} + \Gamma$, one obtains
\[
\Tr_{\mathrm{End}}\mc{P} (D_{\mathrm{End}(\E)} + [\Gamma,\cdot]) = \mc{P} \Tr_{\mathrm{End}} (D_{\mathrm{End}(\E)} + [\Gamma,\cdot]) =  \mc{P} \Tr_{\mathrm{End}} D_{\mathrm{End}(\E)} + \mc{P} \Tr_{\mathrm{End}}  [\Gamma,\cdot],
\]
and it is straightforward to check that $\Tr_{\mathrm{End}}  [\Gamma,\cdot] = 0$. As a consequence, it is sufficient to prove the statement in local coordinates for the trivial flat connection on the trivial vector bundle $\C^r$. Once again, this is immediate.
\end{proof}

We now introduce the map
\[
\mathrm{Ad} : C^\infty(M,\otimes^m_S T^*M \otimes \mathrm{End}(\E))  \rightarrow C^\infty(M,\otimes^m_S T^*M \otimes \mathrm{End}(\E)),
\]
defined by taking the pointwise adjoint (with respect to the metric $h$ on $\mc{E}$) on the endomorphism part. We have:

\begin{lemma}
\label{lemma:commutation-adjoint}
For all $u \in C^\infty(M,\otimes^m_S T^*M|_{0-\Tr} \otimes \mathrm{End}(\E))$, we have:
\[
\mathrm{Ad}( \mc{P} D_{\mathrm{End}(\E)} u ) =   \mc{P} D_{\mathrm{End}(\E)} \mathrm{Ad} (u).
\]
\end{lemma}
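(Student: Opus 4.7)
The plan is to reduce the identity to the statement that the induced connection $\nabla^{\mathrm{End}(\E)}$ on the endomorphism bundle commutes with the pointwise adjoint $\mathrm{Ad}$, which is a direct consequence of the unitarity of $\nabla^{\mc{E}}$.

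First, I would observe that $\mathrm{Ad}$ commutes with $\mc{P}$ for trivial reasons: the projection $\mc{P}$ acts only on the symmetric tensor factor of $\otimes^m_S T^*M \otimes \mathrm{End}(\E)$ while $\mathrm{Ad}$ acts only on the endomorphism factor, so the two operations are performed on disjoint slots and therefore commute. This reduces the statement of the lemma to showing that $\mathrm{Ad} \circ D_{\mathrm{End}(\E)} = D_{\mathrm{End}(\E)} \circ \mathrm{Ad}$ on $C^\infty(M,\otimes^m_S T^*M \otimes \mathrm{End}(\E))$.

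Next, recall from \eqref{equation:formula-de} that $D_{\mathrm{End}(\E)} = \mc{S}_{\mathrm{End}(\E)} \circ \nabla^{\mathrm{End}(\E)}$, where the symmetrization $\mc{S}_{\mathrm{End}(\E)}$ is again inert under $\mathrm{Ad}$. Moreover, the connection on $\otimes^m_S T^*M \otimes \mathrm{End}(\E)$ is of the form $\nabla^{\mathrm{LC}} \otimes \mathbbm{1} + \mathbbm{1} \otimes \nabla^{\mathrm{End}(\E)}$ and the Levi-Civita piece does not touch the endomorphism factor. Hence it is enough to prove that for every $u \in C^\infty(M,\mathrm{End}(\E))$ and every vector field $X$ one has
\[
(\nabla^{\mathrm{End}(\E)}_X u)^* = \nabla^{\mathrm{End}(\E)}_X (u^*).
\]

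Finally, I would verify this compatibility by expanding $X\langle u f, g\rangle = X \langle f, u^* g\rangle$ in two ways, for arbitrary $f,g \in C^\infty(M,\E)$. Using \eqref{equation:connection-end} together with the unitarity identity $X\langle a,b\rangle = \langle \nabla^{\mc{E}}_X a, b\rangle + \langle a, \nabla^{\mc{E}}_X b\rangle$, the left-hand side yields $\langle (\nabla^{\mathrm{End}(\E)}_X u) f, g\rangle$ modulo terms that cancel with the right-hand side, and the right-hand side yields $\langle f, (\nabla^{\mathrm{End}(\E)}_X u^*) g\rangle$. Comparing gives the desired identity. There is no real obstacle here; the only delicate point is keeping track of which tensor factor each operator acts on, and carefully invoking the unitarity of $\nabla^{\mc{E}}$ (as opposed to a general connection, for which the statement would fail).
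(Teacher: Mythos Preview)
Your argument is correct. The paper's proof proceeds slightly differently: after noting $\mathrm{Ad}\,\mc{P}=\mc{P}\,\mathrm{Ad}$, it observes that for skew-Hermitian $\Gamma$ one has $\mathrm{Ad}\,[\Gamma,\cdot]=[\Gamma,\cdot]\,\mathrm{Ad}$, so the identity is independent of the unitary connection and can be checked for the trivial flat connection on $\C^r$. Your route is the intrinsic version of the same computation---instead of stripping off the connection one-form and reducing to $d$, you verify directly that $(\nabla^{\mathrm{End}(\E)}_X u)^*=\nabla^{\mathrm{End}(\E)}_X(u^*)$ from the Leibniz rule and unitarity of $\nabla^{\mc{E}}$. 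Both hinge on exactly the same fact (skew-Hermitian $\Gamma$, equivalently unitarity), and neither is materially shorter or more general than the other.
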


\begin{proof}
It is immediate that $\mathrm{Ad} ~\mc{P} = \mc{P} \mathrm{Ad}$. Moreover, given $\Gamma \in C^\infty(M,T^*M \otimes \mathrm{End}_{\mathrm{sk}}(\E))$, one has $\mathrm{Ad}[\Gamma,\cdot] = [\Gamma,\cdot] \mathrm{Ad}$ (here, the fact that $\Gamma$ is skew-hermitian is crucial). Therefore, similarly to Lemma \ref{lemma:commutation-trace}, the statement boils down in local coordinates to the trivial flat connection on the trivial bundle $\C^r$ and this is immediate.
\end{proof}

\subsubsection{Proof of Theorem \ref{theorem:generic-ckts-endomorphism}}

\label{sssection:absence-ckts-endomorphism}

The proof relies exactly on the same strategy as in \S\ref{ssection:absence-ckt}. We assume that $\nabla^{\mc{E}}$ is a connection such that $\nabla^{\mathrm{End}(\E)}$ has a non-trivial CKT $u$ of order $m \in \N$. We then consider the pullback operator $\X := (\pi^* \nabla^{\mathrm{End}(\E)})_X$ on $SM$, acting on sections of $\mathrm{End}(\pi^* \mc{E})$. If we perturb $\nabla^{\mc{E}}$ by $\nabla^{\mc{E}} + \Gamma$ (with $\Gamma \in C^\infty(M,T^*M \otimes \mathrm{End}_{\mathrm{sk}}(\E))$), we obtain $\X^\Gamma := (\pi^* (\nabla^{\mathrm{End}(\E)}+[\pi^*\Gamma,\cdot]))_X$ which can still be decomposed as $\X^\Gamma = \X^\Gamma_+ + \X^\Gamma_-$, where as before
\[
\X^\Gamma_{\pm} : C^\infty(M,\Omega_m \otimes \mathrm{End}(\E)) \rightarrow C^\infty(M,\Omega_{m\pm 1} \otimes \mathrm{End}(\E)).
\]
Writing $\X_+ := \X_+^{\Gamma=0}$, we have by assumption $\X_+ u = 0$. We introduce $\Delta_+^\Gamma := -\X_-^\Gamma \X_+^\Gamma \geq 0$, $\Pi^\Gamma$ is defined as before by
\[
\Pi^\Gamma := \dfrac{1}{2\pi i} \int_\Gamma (z-\Delta^\Gamma_+)^{-1} \dd z,
\]
for some small counter clockwise oriented circle $\gamma$ around $0$ and $\lambda_\Gamma := \Tr(\Delta_+^\Gamma \Pi^\Gamma)$. Our goal is to find a small non-zero $\Gamma$ such that $\lambda_\Gamma > 0$. Repeating the arguments of \S\ref{ssection:absence-ckt}, we then conclude that there are generically no CKTs on the endomorphism bundle. As far as the first and second perturbation of $\lambda_\Gamma$ at $\Gamma=0$ are concerned, we obtain:
\[
\dd \lambda_{\Gamma=0}(A) = 0,\,\,\, \dd^2 \lambda_{\Gamma=0}(A,A) = \sum_{i=1}^d \|\pi_{\ker \X_-} [A,\cdot]_+ u_i\|^2_{L^2},
\]
where $(u_1,...,u_d)$ form a $L^2$-orthonormal basis of CKTs of degree $m$. (Here, recall that
\[
[A,\cdot] : C^\infty(M,\Omega_m \otimes \mathrm{End}(\E)) \rightarrow C^\infty(M,\Omega_{m-1} \otimes \mathrm{End}(\E) \oplus \Omega_{m+1} \otimes \mathrm{End}(\E)),
\]
so there are natural positive $[A,\cdot]_+$ and negative $[A,\cdot]_-$ operators defined.)

\begin{proof}[Proof of Theorem \ref{theorem:generic-ckts-endomorphism}]
First of all, using Lemma \ref{lemma:commutation-trace}, we have $\Tr_{\mathrm{End}} \mc{P} D_{\mathrm{End}(\E)} u = 0 = \mc{P} D \Tr_{\mathrm{End}} u$. Since $(M,g)$ is assumed not to have nontrivial CKTs on its trivial line bundle, we obtain the following: if $m > 0$, $\Tr_{\mathrm{End}} u = 0$; if $m=0$, $\Tr_{\mathrm{End}} u  = c \in \C$ is constant. In the case where $m = 0$, we can further always assume that
\[
\langle u, \mathbbm{1}_{\mc{E}} \rangle_{L^2} = \int_M \Tr(u) \dd \vol_g = 0 = \vol(M) \times c,
\]
that is $c=0$, insofar as we do not perturb the resonant state $\mathbbm{1}_{\mc{E}}$. As a consequence, we can always take $u$ such that $\Tr_{\mathrm{End}}(u) = 0$. Moreover, we can always assume that $\mathrm{Ad}(u) = -u$. Indeed, if not, then we can take $u' := u-\mathrm{Ad}(u)$ which satisfies $\mathrm{Ad}(u') = -u'$, and is also a CKT of order $m$ by Lemma \ref{lemma:commutation-adjoint}. If $u' \neq 0$, this is fine. If $u'=0$, then taking $i \times u$, we have the right candidate. To sum up this preliminary discussion, we take a non-zero $u \in C^\infty(M,\otimes^m_S T^*M|_{0-\Tr} \otimes \mathrm{End}(\E))$ such that $\mc{P} D_{\mathrm{End}}(u) = 0, \Tr_{\mathrm{End}}(u) = 0$ and $\mathrm{Ad}(u) = -u$. Our goal is to show that there exists $A \in C^\infty(M,T^*M \otimes \mathrm{End}_{\mathrm{sk}}(\E))$ such that $\pi_{\ker \X_-} [A,\cdot]_+ u \neq 0$. We argue by contradiction and assume this is not the case. Then, following \S\ref{ssection:absence-ckt}, this implies that for all $w \in \ker \X_-|_{C^\infty(M,\Omega_{m+1} \otimes \mathrm{End}(\E))}$, one has:
\[
\langle u, [A,\cdot]_- w \rangle_{L^2} = 0.
\]
Of course, taking as in \S\ref{ssection:absence-ckt} a sequence $\varphi_h \rightarrow \delta_{x_0}$ in $\mc{D}'(M)$, we obtain:
\[
\begin{split}
0 & = \langle u, [\varphi_h A,\cdot]_- w \rangle_{L^2} \\
& = \int_{M} \varphi_h \langle u(x), [A,\cdot]_- w (x) \rangle_x \dd \vol(x) \rightarrow \langle u(x_0), [A(x_0),\cdot]_- w(x_0) \rangle,
\end{split}
\]
for all $A \in C^\infty(M,T^*M \otimes \mathrm{End}_{\mathrm{sk}}(\E))$ and $w \in \ker \X_-|_{C^\infty(M,\Omega_{m+1} \otimes \mathrm{End}(\E))}$. Since we already know that $\X_-$ is of uniform divergence type, the evaluation map
\[
\mathrm{ev}_x :  \ker \X_-|_{C^\infty(M,\Omega_{m+1} \otimes \mathrm{End}(\E))} \ni w \mapsto w(x) \in \Omega_{m+1}(T_xM) \otimes \mathrm{End}(\E_x)
\]
is surjective. In other words, we obtain that for every $x \in M$, for every $w \in \Omega_{m+1}(x) \otimes \mathrm{End}(\E_x), A \in T^*_xM \otimes \mathrm{End}_{\mathrm{sk}}(\E_x)$,
\[
\langle u(x), [A(x),\cdot]_- w(x) \rangle_x = 0.
\]
We want to show that this implies $u = 0$. This is now a purely algebraic statement since it is pointwise in $x$. We can therefore forget about $M$ and replace $T_xM$ by $E$ (a Euclidean space) and $\E_x$ by $\E$ (an inner product space). As explained in \S\ref{sssection:spherical-harmonics}, the map $r_m$ allows to identify $\mathbf{H}_m(E)$ with $\Omega_m(E)$ and we will freely use this identification from now on.

\begin{lemma}
Let $u \in \mathbf{H}_m(E) \otimes \mathrm{End}_{\mathrm{sk}}(\E)$ such that $\Tr_{\mathrm{End}}(u)=0$ and assume that $\langle u, [A,\cdot]_- w \rangle = 0$, for all $A \in E^* \otimes \mathrm{End}_{\mathrm{sk}}(\E)$, $w \in \mathbf{H}_{m+1}(E) \otimes \mathrm{End}_{\mathrm{sk}}(\E)$. Then $u=0$.
\end{lemma}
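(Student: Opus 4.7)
The plan is to characterize the linear span of $\{[A,w]_-\colon A\in E^*\otimes\End_\mathrm{sk}(\mc{E}),\,w\in\mathbf{H}_{m+1}(E)\otimes\End_\mathrm{sk}(\mc{E})\}$ as exactly $\mathbf{H}_m(E)\otimes\End_\mathrm{sk}^{0-\Tr}(\mc{E})$, and then use the hypothesis $\Tr_{\mathrm{End}}(u)=0$ to place $u$ itself inside that subspace, forcing $\langle u,u\rangle=0$.

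First I would compute on simple tensors $A=\alpha\otimes T$ and $w=h\otimes S$ with $\alpha\in E^*$, $h\in\mathbf{H}_{m+1}(E)$ and $T,S\in\End_\mathrm{sk}(\mc{E})$. Direct evaluation gives $[A,w](v)=\alpha(v)\,h(v)\,[T,S]$, hence $[A,w]_-=(\alpha h)_-\otimes[T,S]=\alpha_-h\otimes[T,S]$, where $\alpha_-\colon\mathbf{H}_{m+1}(E)\to\mathbf{H}_m(E)$ is the scalar lowering map from Lemma~\ref{lemma:algebraic-surjectivity-trivial-bundle}. Since $[T,S]$ is automatically trace-free and skew-Hermitian whenever $T,S$ are skew-Hermitian, the image sits inside $\mathbf{H}_m(E)\otimes\End_\mathrm{sk}^{0-\Tr}(\mc{E})$.

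For the reverse inclusion, given any decomposable tensor $h'\otimes U\in\mathbf{H}_m(E)\otimes\End_\mathrm{sk}^{0-\Tr}(\mc{E})$, Lemma~\ref{lemma:baby} provides $T,S\in\End_\mathrm{sk}(\mc{E})$ with $[T,S]=U$; then fixing any $\alpha\in E^*\setminus\{0\}$, the surjectivity of $\alpha_-$ from Lemma~\ref{lemma:algebraic-surjectivity-trivial-bundle} yields $h\in\mathbf{H}_{m+1}(E)$ with $\alpha_-h=h'$, and one computes $[\alpha\otimes T,\,h\otimes S]_-=h'\otimes U$. Taking linear combinations, the span of the image coincides with $\mathbf{H}_m(E)\otimes\End_\mathrm{sk}^{0-\Tr}(\mc{E})$.

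To finish, I would decompose $\End_\mathrm{sk}(\mc{E})=\End_\mathrm{sk}^{0-\Tr}(\mc{E})\oplus\mathbb{R}\cdot i\mathbbm{1}_{\mc{E}}$ orthogonally for the Hilbert--Schmidt inner product and write $u=u_0+h_0\otimes i\mathbbm{1}_{\mc{E}}$ with $u_0\in\mathbf{H}_m(E)\otimes\End_\mathrm{sk}^{0-\Tr}(\mc{E})$ and $h_0\in\mathbf{H}_m(E)$. The hypothesis $\Tr_{\mathrm{End}}(u)=0$ reads $i r\,h_0=0$ (with $r=\rk\mc{E}$), so $h_0=0$ and $u\in\mathbf{H}_m(E)\otimes\End_\mathrm{sk}^{0-\Tr}(\mc{E})$. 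The assumed orthogonality $\langle u,[A,w]_-\rangle=0$ for all $A,w$ then means $u$ is orthogonal to the whole of $\mathbf{H}_m(E)\otimes\End_\mathrm{sk}^{0-\Tr}(\mc{E})$, hence to itself, so $u=0$. The only delicate point is the image identification, which is really just the outer product of two surjectivity statements—Lemma~\ref{lemma:algebraic-surjectivity-trivial-bundle} on the symmetric-tensor side and Lemma~\ref{lemma:baby} on the endomorphism side—everything else is bookkeeping.
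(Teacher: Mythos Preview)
Your proof is correct and is essentially the paper's argument, just packaged more cleanly: the paper fixes an orthonormal basis $(s_i)$ of $\mathrm{End}_{\mathrm{sk}}(\mc{E})\cap\ker\Tr$, writes $u=\sum_i p_i\otimes s_i$, and for each $i$ explicitly picks $A=A_1\otimes\e_1^*$ and $w=f\otimes\widetilde{w}$ with $[A_1,\widetilde{w}]=s_i$ (Lemma~\ref{lemma:baby}) and $\partial_1 f\propto p_i$ (proof of Lemma~\ref{lemma:algebraic-surjectivity-trivial-bundle}) to force $\|p_i\|^2=0$; you instead identify the full span of the $[A,w]_-$ as $\mathbf{H}_m(E)\otimes(\mathrm{End}_{\mathrm{sk}}(\mc{E})\cap\ker\Tr)$ and conclude in one step. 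The two key inputs---surjectivity of $\alpha_-$ and of the commutator map onto trace-free skew-Hermitian matrices---are identical.
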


\begin{proof}
Let 
\[
P_A := [A,\cdot] : \mathbf{H}_{m}(E) \otimes \mathrm{End}(\E) \rightarrow  \mathbf{H}_{m-1}(E) \otimes \mathrm{End}(\E) \oplus  \mathbf{H}_{m+1}(E) \otimes \mathrm{End}(\E).
\]
We introduce $P_A^{\pm} := [A,\cdot]_{\pm}$ as the projections on the higher ($+$) and lower ($-$) parts. We consider an orthonormal basis $\e_1, ...,\e_n$ of $E$ and for $f \in C^\infty(E)$, we write $\partial_j f (v):= \partial_t f(v+t\e_j)|_{t=0}$.  Given $f \in \mathbf{H}_m(E)$, observe that $\partial_j f \in \mathbf{H}_{m-1}(E)$ (it is a harmonic polynomial of degree $m-1$).

Let $w = f \otimes \widetilde{w} $, where $f \in \Omega_{m+1}(E), \widetilde{w} \in \mathrm{End}(\E)$, and write $A = \sum_{j=1}^n A_j \otimes \e_j^*$ with $A_j \in \mathrm{End}_{\mathrm{sk}}(\E)$. Then:
\[
P_A^- w = \dfrac{1}{2(m-1)+n} \sum_{j=1}^n \partial_j f \otimes [A_j, \widetilde{w}]    \in C^\infty(M, \Omega_{m+1} \otimes \mathrm{End}(\E)).
\]
Indeed, by the proof of Lemma \ref{lemma:multiplication-1-form}, $P_A w (v) = A(v)w(v)-w(v)A(v) = a(v) + |v|^2 b(v)$, for some $a \in \mathbf{H}_{m+1}(E) \otimes \mathrm{End}(\E), b \in \mathbf{H}_{m-1}(E) \otimes \mathrm{End}(\E)$, with
\[
\begin{split}
&(2(m-1)+n)b(v)  =  \nabla A(v) \cdot \nabla w (v) - \nabla w (v) \cdot \nabla A(v) \\
& = \left(\sum_{j=1}^n A_j \otimes \e_j\right) \cdot \left(\sum_{j=1}^n \widetilde{w} \otimes \partial_j f(v) \e_j \right) -  \left(\sum_{j=1}^n \widetilde{w} \otimes \partial_j f(v) \e_j \right) \cdot \left(\sum_{j=1}^n A_j \otimes \e_j\right) \\
& = \sum_{j=1}^n \partial_j f(v) [A_j,\widetilde{w}].
\end{split}
\]

Now, we consider an orthonormal basis $(s_1,...,s_{r^2-1})$ of the \emph{real} vector space $\mathrm{End}_{\mathrm{sk}}(\E) \cap \ker \Tr$ (endowed with the standard scalar product on $\mathrm{End}(\E)$). We can write $u = \sum_{i=1}^{r^2-1} p_{i} \otimes s_i$, where $p_{i} \in \mathbf{H}_m(E)$. Fix some index $1 \leq i \leq r^2-1$. We consider $A = A_1 \otimes \e_1^*$, $w = f \otimes \widetilde{w}$ such that $A_1, \widetilde{w} \in \mathrm{End}_{\mathrm{sk}}(\E)$ and $[A_1,\widetilde{w}] = s_i$ (this is always possible by Lemma \ref{lemma:baby}), and $f \in \mathbf{H}_{m+1}(E)$ is such that $\partial_1 f = (2(m-1)+n) p_i$ (this is always possible by the proof of Lemma \ref{lemma:algebraic-surjectivity-trivial-bundle} since $\partial_1$ is surjective). Then:
\[
\begin{split}
\langle u, P_A^- w \rangle = 0 & = \sum_{j=1}^{r^2-1} \langle  p_{j} \otimes s_j, \dfrac{1}{2(m-1)+n} \partial_1 f(v) \otimes [A_1,\widetilde{w}] \rangle \\
& = \sum_{j=1}^{r^2-1} \langle  p_{j} \otimes s_j,  p_i \otimes s_i\rangle = \|p_i\|^2 \underbrace{\|s_i\|^2}_{=1} = \|p_i\|^2.
\end{split}
\]
Thus $u=0$.
\end{proof}
\noindent This completes the proof of Theorem \ref{theorem:generic-ckts-endomorphism}.

\end{proof}

\section{Opaque connections are generic}

\subsection{Pollicott-Ruelle resonances}

\label{ssection:resonances}

In this first paragraph, we consider the general case of a smooth closed manifold $\M$ endowed with an Anosov vector field $X$ preserving a smooth measure $\dd \mu$ and generating a flow $(\varphi_t)_{t \in \R}$. Recall that the Anosov property means that there exists a continuous flow-invariant splitting of the tangent bundle
\[
T\M = \R X \oplus E_s \oplus E_u,
\]
such that:
\begin{equation}
\label{equation:anosov}
\begin{array}{l}
\forall t \geq 0, \forall v \in E_s, ~~ \|\dd\varphi_t(v)\| \leq Ce^{-t\lambda}\|v\|, \\
\forall t \leq 0, \forall v \in E_u, ~~ \|\dd\varphi_t(v)\| \leq Ce^{-|t|\lambda}\|v\|,
\end{array}
\end{equation}
where the constants $C,\lambda > 0$ are uniform and the metric $\|\cdot\|$ is arbitrary. (It will be applied with $\M = SM$ and the geodesic vector field $X$, and assuming $X$ is Anosov.) We also introduce the dual decomposition
\[
T^*\M = \R E_0^* \oplus E_s^* \oplus E_u^*,
\]
where $E_0^*(E_s \oplus E_u) = 0, E_s^*(E_s \oplus \R X) = 0, E_u^*(E_u \oplus \R X) = 0$. We assume that $\mc{E} \rightarrow \M$ is a Hermitian vector bundle over $\M$. Let $\nabla^{\mc{E}}$ be a unitary connection on $\mc{E}$ and set $\X := \nabla^{\mc{E}}_X$. Since $X$ preserves $\dd \mu$ and $\nabla^{\mc{E}}$ is unitary, the operator $\X$ is skew-adjoint on $L^2(SM,\mc{E};\dd\mu)$, with dense domain
\begin{equation}
\label{equation:domaine-p}
\mc{D}_{L^2} := \left\{ u \in L^2(\M,\mc{E};\dd\mu) ~|~ \X u \in L^2(\M,\mc{E};\dd\mu)\right\}.
\end{equation}
Its $L^2$-spectrum consists of absolutely continuous spectrum on $i\R$ and of embedded eigenvalues. We introduce the resolvents
\begin{equation}
\label{equation:resolvent}
\begin{split}
&\RR_+(z) := (-\X-z)^{-1} = - \int_0^{+\infty} e^{-t z} e^{-t\X} \dd t, \\
&\RR_-(z) := (\X-z)^{-1} = - \int_{-\infty}^0 e^{z t} e^{-t\X} \dd t,
\end{split}
\end{equation}
initially defined for $\Re(z) > 0$. (Let us stress on the conventions here: $-\X$ is associated to the positive resolvent $\RR_+(z)$ whereas $\X$ is associated to the negative one $\RR_-(z)$.) Here $e^{-t\X}$ denotes the propagator of $\X$, namely the parallel transport by $\nabla^{\mc{E}}$ along the flowlines of $X$. Recall that for $x \in \M, t \in \R$, $C(x,t) : \mc{E}_x \rightarrow \E_{\varphi_t(x)}$ denotes the parallel transport (with respect to the connection $\nabla^{\mc{E}}$) along the flowline $(\varphi_s(x))_{s \in [0,t]}$. If $f \in C^\infty(\M,\mc{E})$, then $(e^{-t \X} f) (x) = C(\varphi_{-t}(x),t)(f(\varphi_{-t}(x))$. If $\X = X$ is simply the vector field acting on functions (i.e. $\mc{E}$ is the trivial line bundle), then $e^{-tX}f = f(\varphi_{-t}(\cdot))$ is nothing but the composition with the flow.

The resolvents can be meromorphically extended to the whole complex plane by making $\X$ act one anisotropic Sobolev spaces $\mc{H}^s_\pm$. The poles of the resolvents are called the \emph{Pollicott-Ruelle resonances} and have been widely studied in the aforementioned literature \cite{Liverani-04, Gouezel-Liverani-06,Butterley-Liverani-07,Faure-Roy-Sjostrand-08,Faure-Sjostrand-11,Faure-Tsuji-13,Dyatlov-Zworski-16}. Note that the resonances and the resonant states associated to them are intrinsic to the flow and do not depend on any choice of construction of the anisotropic Sobolev spaces. More precisely, there exists a constant $c > 0$ such that $\RR_\pm(z) \in \mc{L}(\mc{H}_\pm^s)$ are meromorphic in $\left\{\Re(z) > -cs\right\}$. For $\RR_+(z)$ (resp. $\RR_-(z)$), the space $\mc{H}^s_+$ (resp. $\mc{H}^s_-$) consists of distributions which are microlocally $H^s$ in a neighborhood of $E_s^*$ (resp. $H^{-s}$ in a neighborhood of $E_s^*$) and microlocally $H^{-s}$ in a neighborhood of $E_u^*$ (resp. $H^{s}$ in a neighborhood of $E_u^*$), see \cite{Faure-Sjostrand-11,Dyatlov-Zworski-16}. These spaces also satisfy $(\mc{H}^s_+)' = \mc{H}^s_-$ (where one identifies the spaces using the $L^2$-pairing). These resolvents satisfy the following equalities on $\mc{H}^s_\pm$, for $z$ not a resonance:
\begin{equation}
\label{equation:resolvent-identity}
\RR_\pm(z)(\mp\X- z) = (\mp\X- z)^{-1} \RR_\pm(z) = \mathbbm{1}_{\mc{E}}
\end{equation}
Given $z \in \C$, not a resonance, we have:
\[
\RR_+(z)^* = \RR_-(\overline{z}),
\]
where this is understood in the following way: given $f_1, f_2 \in C^\infty(\M,\mc{E})$, we have
\[
\langle \RR_+(z) f_1, f_2 \rangle_{L^2} = \langle f_1,  \RR_-(\overline{z}) f_2 \rangle_{L^2}.
\]
(We will always use this convention for the definition of the adjoint.) Since the operators are skew-adjoint on $L^2$, all the resonances (for both the positive and the negative resolvents $\RR_\pm$) are contained in $\left\{\Re(z) \leq 0 \right\}$. A point $z_0 \in \C$ is a resonance for $-\X$ (resp. $\X$) i.e. is a pole of $z \mapsto \RR_+(z)$ (resp. $\RR_-(z)$) if and only if there exists a non-zero $u \in \mc{H}^s_+$ (resp. $\mc{H}^s_-$) for some $s > 0$ such that $-\X u = z_0 u$ (resp. $\X u = z_0 u$). If $\gamma$ is a small counter clock-wise oriented circle around $z_0$, then the spectral projector onto the resonant states is
\[
\Pi_{z_0}^{\pm} = - \dfrac{1}{2\pi i} \int_{\gamma} \RR_{\pm}(z) \dd z =  \dfrac{1}{2\pi i} \int_{\gamma} (z \pm \X)^{-1} \dd z,
\]
where we use the abuse of notation that $-(\X+z)^{-1}$ (resp. $(\X-z)^{-1}$) to denote the meromorphic extension of $\RR_+(z)$ (resp. $\RR_-(z)$).

\subsection{Resonances at $z=0$}

By the previous paragraph, we can write in a neighborhood of $z=0$ the following Laurent expansion (beware the sign conventions):
\[
\RR_+(z) = - \RR_0^+ - \dfrac{\Pi_0^+}{z} + \mc{O}(z).
\]
(Or in other words, using our abuse of notations, $(\X+z)^{-1} = \RR_0^+ + \Pi_0^+/z + \mc{O}(z)$.) And:
\[
\RR_-(z) = -\RR_0^- - \dfrac{\Pi_0^-}{z} + \mc{O}(z).
\]
(Or in other words, using our abuse of notations, $(z - \X)^{-1} = \RR_0^- + \Pi_0^-/z + \mc{O}(z)$.) As a consequence, these equalities define the two operators $\RR_0^{\pm}$ as the holomorphic part (at $z=0$) of the resolvents $-\RR_\pm(z)$. We introduce:
\begin{equation}
\label{equation:pi}
\Pi := \RR_0^+ + \RR_0^-.
\end{equation}
We have the:

\begin{lemma}
\label{lemma:relations-resolvent}
We have $(\RR_0^+)^* = \RR_0^{-}, (\Pi_0^+)^* = \Pi_0^- = \Pi_0^+$. Thus $\Pi$ is formally self-adjoint. Moreover, it is nonnegative in the sense that for all $f \in C^\infty(\M, \mc{E})$, $\langle \Pi f, f \rangle_{L^2} = \langle f, \Pi f \rangle_{L^2} \geq 0$. Eventually, the following statements are equivalent: $\langle{\Pi f, f}\rangle_{L^2} = 0$ if and only if $\Pi f = 0$ if and only if $f = \mathbf{X}u + v$ for some $u \in C^\infty(\M, \E)$ and $v\in \ker(\mathbf{X})$.
\end{lemma}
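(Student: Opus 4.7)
The plan is to combine three ingredients: the Laurent expansions of the resolvents at $z=0$, the adjoint identity $\RR_+(z)^* = \RR_-(\overline{z})$, and the spectral decomposition of the skew-adjoint operator $\X$ acting on $L^2(\M, \mc{E}; \dd\mu)$. Inserting $\RR_+(z) = -\Pi_0^+/z - \RR_0^+ + \mathcal{O}(z)$ and the analogous expansion of $\RR_-(\overline z)$ into the adjoint identity and matching coefficients yields at once $(\Pi_0^+)^* = \Pi_0^-$ and $(\RR_0^+)^* = \RR_0^-$, whence the self-adjointness of $\Pi = \RR_0^+ + \RR_0^-$. For the equality $\Pi_0^+ = \Pi_0^-$, I will use that resonant states at $z=0$ are smooth (they belong to $\mc{H}_+^s \cap \mc{H}_-^s$, and this intersection equals $C^\infty$ by propagation of singularities on the anisotropic spaces, whose wavefront conditions on $E_u^*$ and $E_s^*$ are complementary). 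Both projectors are then finite-rank idempotents onto the same subspace $\ker(\X)\cap C^\infty$; testing $\langle \Pi_0^+ f - \Pi_0^- f, g\rangle_{L^2}$ against every $g \in \ker(\X)\cap C^\infty$ (using $(\Pi_0^+)^* = \Pi_0^-$ and $\Pi_0^\pm g = g$) forces the vanishing on a vector which itself lies in $\ker \X$, hence the two projectors agree.

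For nonnegativity, I will invoke the spectral theorem for $\X = i\int \mu\, \dd E(\mu)$ on $L^2$, which for $\epsilon > 0$ gives
\[
-\RR_+(\epsilon) - \RR_-(\epsilon) \;=\; \int \frac{2\epsilon}{\epsilon^2 + \mu^2}\, \dd E(\mu).
\]
For $f \in C^\infty$ with $\Pi_0 f = 0$, the pairing with $f$ is a nonnegative integral of a Poisson-type kernel against the positive spectral measure $\dd \|E(\mu)f\|^2$, and sending $\epsilon \to 0^+$ in the Laurent expansion $-\RR_+(\epsilon) - \RR_-(\epsilon) = (\Pi_0^+ + \Pi_0^-)/\epsilon + \Pi + \mathcal{O}(\epsilon)$ produces $\langle \Pi f, f\rangle \geq 0$. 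For general smooth $f$, I decompose $f = f_0 + f_\perp$ with $f_0 = \Pi_0 f$; a direct check using $\RR_\pm(z) v = \mp v/z$ on $\ker\X$ gives $\Pi v = 0$, so $\Pi f = \Pi f_\perp$, and $\langle \Pi f_\perp, f_0\rangle = \langle f_\perp, \Pi f_0\rangle = 0$ by self-adjointness, reducing the general case to the perpendicular one.

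The equivalences then fall out. The implication (b)$\Rightarrow$(a) is trivial, and (a)$\Rightarrow$(b) follows from Cauchy--Schwarz for the nonnegative Hermitian form $(f,g)\mapsto \langle \Pi f, g\rangle_{L^2}$. For (c)$\Rightarrow$(b), I will use that $\X$ commutes with $\RR_\pm(z)$ (hence with $\RR_0^{\pm}$ and $\Pi_0^{\pm}$) together with the identities $\X\RR_0^+ = 1 - \Pi_0^+$ and $\X\RR_0^- = \Pi_0^- - 1$, obtained by matching coefficients in $(-\X-z)\RR_+(z)=\id$ and $(\X-z)\RR_-(z) = \id$, to compute $\Pi(\X u) = \Pi_0^- u - \Pi_0^+ u = 0$ (invoking $\Pi_0^+ = \Pi_0^-$), while $\Pi v = 0$ for $v\in\ker\X$ was already observed. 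The key remaining implication (b)$\Rightarrow$(c) is obtained as follows: $\Pi f = 0$ rewrites as $\RR_0^+ f = -\RR_0^- f$, and this common element then lies in $\mc{H}_+^s \cap \mc{H}_-^s$; invoking again $\mc{H}_+^s \cap \mc{H}_-^s \subset C^\infty$, I set $u := \RR_0^+ f \in C^\infty$ and $v := \Pi_0^+ f \in \ker \X \cap C^\infty$, and the identity $\X\RR_0^+ = 1 - \Pi_0^+$ yields $\X u = f - v$.

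The main obstacle is the smoothness statement $\mc{H}_+^s \cap \mc{H}_-^s \subset C^\infty$ (and, relatedly, the smoothness of resonant states at $z=0$), which underpins both $\Pi_0^+ = \Pi_0^-$ and the implication (b)$\Rightarrow$(c). This is a standard but nontrivial input from the microlocal theory of Pollicott--Ruelle resonances, rooted in the complementary wavefront-set conditions at $E_s^*$ and $E_u^*$; granting it, the rest of the argument is spectral-theoretic and algebraic bookkeeping.
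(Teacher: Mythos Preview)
Your proof is correct and follows essentially the same strategy as the paper: Laurent matching for the adjoint identities, smoothness of resonant states for $\Pi_0^+ = \Pi_0^-$, a spectral-measure argument for nonnegativity (you use the Poisson-kernel limit of the resolvent sum, the paper uses Stone's formula---these are equivalent), and the identity $\X\RR_0^+ = \mathbbm{1} - \Pi_0^+$ for the final equivalences.

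There is one genuine difference worth noting. For the implication $\Pi f = 0 \Rightarrow f = \X u + v$ with $u$ smooth, the paper proceeds via (a)$\Rightarrow$(c) directly: it sets $u = \RR_0^+ f$, computes $\Re\langle \RR_0^+ f, f\rangle = -\Im\langle -i\X u, u\rangle = 0$, and then invokes \cite[Lemma~2.3]{Dyatlov-Zworski-17} (a threshold/positive-commutator statement) to conclude $u\in C^\infty$. Your route (b)$\Rightarrow$(c) is cleaner: from $\RR_0^+ f = -\RR_0^- f$ the element lies simultaneously in $\bigcap_s \mc{H}_+^s$ and $\bigcap_s \mc{H}_-^s$, hence has empty wavefront set and is smooth. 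This avoids the specific black-box lemma and uses only the complementary microlocal regularity of the two anisotropic scales. One caveat: your phrasing ``$\mc{H}_+^s \cap \mc{H}_-^s \subset C^\infty$'' is not correct for a single fixed $s$; what you need (and what your argument actually uses, since $f$ is smooth) is $\bigcap_{s>0} \mc{H}_+^s \cap \bigcap_{s>0} \mc{H}_-^s \subset C^\infty$, which follows from $\WF(u) \subset E_u^* \cap E_s^* = \emptyset$.
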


\begin{proof}
First of all, for $z$ near $0$:
\[
\begin{split}
\RR_+(z)^*  = \RR_-(\overline{z}) & = -\RR_0^- - \Pi_0^-/\overline{z} + \mc{O}(\overline{z}) \\
& = -(\RR_0^+)^* - (\Pi_0^+)^*/\overline{z} + \mc{O}(\overline{z}),
\end{split}
\]
which proves $(\RR_0^+)^* = \RR_0^{-}, (\Pi_0^+)^* = \Pi_0^-$. 

We now show that $\Pi_0^+ = \Pi_0^-$. Since $\X$ is skew-adjoint, we know by \cite[Lemma 2.3]{Dyatlov-Zworski-17} that resonant states at $0$ are smooth. Therefore, for any $s >0$
\[
\ker (-\X|_{\mc{H}^s_+}) = \ker (\X|_{\mc{H}^s_-}) = \ran(\Pi_0^-|_{C^\infty(\M,\E)}) = \ran(\Pi_0^+|_{C^\infty(\M,\E)})
\]
(since $C^\infty(\M,\E)$ is dense in anisotropic Sobolev spaces). Moreover, $\ker(\Pi_0^-|_{C^\infty(\M,\E)}) = \ker(\Pi_0^+|_{C^\infty(\M,\E)})$. Indeed, if $f_1 \in C^\infty(\M,\E) \cap \ker(\Pi_0^-)$, then for any $f_2 \in C^\infty(\M,\E)$, one has $0 = \langle \Pi_0^- f_1, f_2 \rangle_{L^2} = \langle f_1, \Pi_0^+ f_2 \rangle_{L^2}$, that is $f_1$ is orthogonal to $\ran(\Pi_0^+) = \ran(\Pi_0^-)$ and thus for any $f_2$, $0 = \langle f_1, \Pi_0^- f_2 \rangle_{L^2} = \langle \Pi_0^+ f_1, f_2 \rangle_{L^2}$, so $f_1 \in C^\infty(\M,\E) \cap \ker(\Pi_0^+)$. As a consequence, the two projections agree on smooth sections.

To show the nonnegativity, we apply Stone's formula to the self-adjoint operator $i\X$ (with dense domain $\mc{D}_{L^2}$ previously defined in \eqref{equation:domaine-p}). More precisely, taking $\mc{H} := L^2(\M, \mc{E} ; \dd \mu) \cap \ker \Pi_0^+$, the spectrum of $i\X$ on $\mc{H}$ (near the spectral value $0$) is only absolutely continuous and if $\pi_{[a,b]}$ denotes the spectral projection onto the energies $[a,b]$, we obtain:
\[
\begin{split}
\pi_{[a,b]} & = \lim_{\eps \rightarrow 0} \dfrac{1}{2 \pi i} \int_a^b \left((i\X -(\lambda+i \eps))^{-1} - (i\X -(\lambda-i\eps))^{-1}\right) \dd \lambda \\
& = \lim_{\eps \rightarrow 0} \dfrac{1}{2\pi} \int_a^b \left(- \RR_-(-i\lambda + \eps) - \RR_+(i \lambda + \eps) \right) \dd \lambda \\
& = - \dfrac{1}{2\pi} \int_a^b \left( \RR_-(-i \lambda) + \RR_+(i \lambda)\right) \dd \lambda,
\end{split}
\]
where the limit is understood in the weak sense (by applying it to $f \in C^\infty(SM,\mc{E}) \cap \ker \Pi_0^+$ and pairing it to $f$). We then obtain:
\[
\partial_\lambda \pi_{(-\infty,\lambda)}|_{\lambda=0} = \dfrac{1}{2\pi} (\RR_0^- + \RR_0^+) = \dfrac{\Pi}{2\pi}\geq 0.
\]

Assume $\langle{\Pi f, f}\rangle_{L^2} = 0$ for some $f\in C^\infty(\M, \E)$; as $\RR_0^- = (\RR_0^+)^*$, equivalently we have $\Re (\langle{\RR_0^+f, f}\rangle_{L^2}) = 0$. Using the fact that $\mc{H}_+^s = \ker \Pi_0^{+} \oplus \ran \Pi_0^+$ for any $s > 0$, as well as the relation $\mathbf{X} \RR_0^+ = \mathbbm{1} - \Pi_0^+$ given in \eqref{eq:resolventrelations} below, we have that $\mathbf{X}: \ker \Pi_0^+ \xrightarrow{\cong} \ker \Pi_0^+$ is an isomorphism with inverse $\pm \RR_0^+$. Thus setting $u := \pm \RR_0^+ f$ and $v := \Pi_0^+ f$, we may write $f = \X u + v$. We compute
\begin{equation}\label{eq:Stonevsscattering}
	0 = \Re (\langle{\RR_0^+ f, f}\rangle_{L^2}) = \Re(\langle{u, \Pi_0^+f + \X u}\rangle_{L^2}) = \Re(\langle{u, \X u}\rangle_{L^2}) = -\Im(\langle{-i\X u, u}\rangle_{L^2}),
\end{equation}
using that $\Pi_0^+$ is formally self-adjoint and $u \in \ker \Pi_0^+$. Since $f \in C^\infty(\M, \E) \subset \mc{H}_+^s$ for any $s > 0$, we have $u \in \mc{H}_+^s$ for any $s > 0$, and so the wavefront set of $u$ satisfies $\WF(u) \subset E_u^*$. Thus again an application of \cite[Lemma 2.3]{Dyatlov-Zworski-17} gives $u \in C^\infty$. It is then immediate that $\Pi f = 0$, thus completing the proof.\footnote{Note that the positivity of $\Pi$ alternatively follows from \eqref{eq:Stonevsscattering} and Lemma \cite[Lemma 2.3]{Dyatlov-Zworski-17}.}
\end{proof}

In the following, we will write $\ker \X$ instead of $\ker \X|_{\mc{H}^s_{\pm}}$ in order not to burden the notations, but be careful that we are always referring to elements in anisotropic spaces (otherwise, $\ker \X|_{H^{-s}}$ is infinite dimensional for any $s > 0$). We also record here for the sake of clarity the following identities:
\begin{equation}\label{eq:resolventrelations}
\begin{split}
& \Pi_0^{+} \RR_0^+ = \RR_0^+ \Pi_0^{+} = 0,\,\, \Pi_0^{-} \RR_0^- = \RR_0^- \Pi_0^{-} = 0,\\
& \X \Pi_0^\pm = \Pi_0^\pm \X = 0,\,\, \X \RR_0^+ = \RR_0^+ \X = \mathbbm{1} - \Pi_0^+,\,\, -\X \RR_0^- = -\RR_0^- \X = \mathbbm{1} - \Pi_0^-.
\end{split}
\end{equation}
We also have:

\begin{lemma}
\label{lemma:resonances-zero}
We have:
\begin{enumerate}
\item[1.] If $u \in \ker (\X)$, then $u \in C^\infty(\M,\mc{E})$ and $u$ does not vanish unless $u \equiv 0$,
\item[2.] There exists a basis $u_1,...,u_p$ of $\ker (\X)$ such that
\[
\Pi_0^{\pm} = \sum_{i=1}^p \langle \cdot, u_i \rangle_{L^2} u_i.
\]
\item[3.] Let $u_1, ..., u_p$ be a basis of $\ker (\X)$. Then for all $x \in \M$, the vectors $(u_1(x),...,u_p(x))$ are independent as elements of $\mc{E}_x$. We can thus always assume that $(u_1(x),...,u_p(x))$ are orthonormal.
\item[4.] In particular, $\dim(\ker(\X)) \leq \rk(\mc{E})$.
\end{enumerate}
\end{lemma}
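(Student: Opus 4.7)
The plan is to address the four items in the stated order, with items (1), (3), (4) all flowing from a single ergodicity observation and only (2) requiring the additional input that the pole of $\RR_\pm$ at $z=0$ is simple.

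For (1), smoothness of any $u \in \ker(\X)$ is already the content of \cite[Lemma 2.3]{Dyatlov-Zworski-17}, as invoked in Lemma~\ref{lemma:relations-resolvent}. The non-vanishing part will rest on the observation that $|u|^2_{\mc{E}} \in C^0(\M)$ is $\varphi_t$-invariant: since $\nabla^{\mc{E}}$ is unitary,
\[
X|u|^2_{\mc{E}} = \langle \X u, u\rangle_{\mc{E}} + \langle u, \X u\rangle_{\mc{E}} = 2\Re\langle \X u, u\rangle_{\mc{E}} = 0.
\]
An Anosov flow preserving the smooth measure $\dd\mu$ is ergodic, so every continuous $\varphi_t$-invariant function is constant. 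Hence $|u|^2_{\mc{E}}$ is constant, and $u$ vanishes everywhere or nowhere.

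For (2), I will show that the pole of $\RR_\pm(z)$ at $z=0$ is simple, i.e.\ $(\Pi_0^\pm)^2 = \Pi_0^\pm$. Combined with the self-adjointness and the identity $\Pi_0^+ = \Pi_0^-$ from Lemma~\ref{lemma:relations-resolvent}, this identifies $\Pi_0^\pm$ as the $L^2$-orthogonal projector onto its finite-dimensional range $\ker(\X)$, and the stated formula then follows by expanding $\Pi_0^\pm$ in an $L^2$-orthonormal basis $u_1,\dots,u_p$ of $\ker(\X)$. To rule out Jordan blocks, suppose $\X^2 u = 0$ with $u$ in the generalized resonant subspace at $0$ (so $u$ is smooth by the wavefront set argument of \cite[Lemma 2.3]{Dyatlov-Zworski-17}). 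Setting $v := \X u \in \ker(\X)$, the skew-adjointness of $\X$ on its $L^2$-domain yields
\[
\|v\|_{L^2}^2 = \langle \X u, v\rangle_{L^2} = -\langle u, \X v\rangle_{L^2} = 0,
\]
so $v = 0$ and no Jordan block arises.

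For (3), I will polarize the observation in (1): for any $u, v \in \ker(\X)$, the scalar function $x \mapsto \langle u(x), v(x)\rangle_{\mc{E}_x}$ is continuous and $\varphi_t$-invariant, hence constant by ergodicity, and its value must equal $\langle u, v\rangle_{L^2}/\vol(\M,\mu)$. Applied to an $L^2$-orthonormal basis $(u_1,\dots,u_p)$ produced in (2), this shows that the rescaled family $(\sqrt{\vol(\M,\mu)}\,u_i(x))_{i=1}^p$ is orthonormal in $\mc{E}_x$ at every $x$; in particular it is pointwise linearly independent, which is (3). Claim (4) is then immediate, since pointwise linear independence in $\mc{E}_x$ forces $p \leq \rk(\mc{E})$. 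The only delicate step in the whole argument is the simplicity of the pole in (2); the remainder is a direct consequence of ergodicity and Lemma~\ref{lemma:relations-resolvent}.
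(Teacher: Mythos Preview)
Your proof is correct and follows the same line as the paper: items (1), (3), (4) all come from the identity $X\langle u,v\rangle_{\mc{E}}=0$ (unitarity of $\nabla^{\mc{E}}$) together with transitivity/ergodicity of the volume-preserving Anosov flow, and item (2) from the self-adjointness $\Pi_0^+=(\Pi_0^+)^*=\Pi_0^-$ established in Lemma~\ref{lemma:relations-resolvent}. Your treatment of (2) is in fact more careful than the paper's: you explicitly rule out Jordan blocks via the skew-adjointness argument $\|\X u\|^2=-\langle u,\X^2 u\rangle=0$, whereas the paper simply writes the Laurent expansion with a simple pole and records $\X\Pi_0^\pm=0$ among the identities in \eqref{eq:resolventrelations} without further comment.
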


\begin{proof} These are simple consequences of the discussion above:

\emph{Proof of 1.} Just use that $X |u|^2 = \langle \X u, u \rangle + \langle u, \X u \rangle = 0$, thus $|u|^2$ is constant. 

\emph{Proof of 2.} This follows from Lemma \ref{lemma:relations-resolvent}. 

\emph{Proof of 3.} This follows from the fact that $\nabla^{\E}$ is unitary: assume $u_1, \dotso, u_p$ is orthonormal at $x_0$. Then $X\langle{u_i, u_j}\rangle_{\E} = \langle \X u_i, u_j \rangle + \langle u_i, \X u_j \rangle = 0$, so by transitivity $\langle{u_i, u_j}\rangle_{\E} \equiv \delta_{ij}$ globally. 

\emph{Proof of 4.} Follows from the previous item.
\end{proof}

\subsection{General remarks on opaque connections}

\label{ssection:remarks-endomorphisms}

As mentioned earlier, $\nabla^{\mc{E}}$ induces a canonical connection $\nabla^{\mathrm{End}(\E)}$ on $\mathrm{End}(\mc{E})$ defined by \eqref{equation:connection-end}. Since $\nabla^{\mc{E}}$ is assumed to be unitary, we have $\nabla^{\End(\E)}$ is unitary and $X$ preserves $\dd \mu$, so $\nabla^{\mathrm{End}(\E)}_X$ is formally skew-adjoint.
%
As a consequence, the Pollicott-Ruelle resonant states of $\nabla^{\mathrm{End}(\E)}_X$ at $0$ are smooth. Moreover, they always contain the section $\mathbbm{1}_{\mc{E}}$. We want to investigate what are the other resonant states at $0$ of $\nabla^{\mathrm{End}(\E)}_X$ and what does their existence imply. Recall that a subbundle $\mc{F}$ of $\E$ is said to be invariant if for all $x \in \M, f \in \mc{F}_x$, $t \in \mathbb{R}$, one has $C(x,t) f \in \mc{F}_{\varphi_t(x)}$. We say that $\mc{F}$ is \emph{irreducible} if any invariant subbundle of $\mc{F}$ is either $\mc{F}$ or $\left\{ 0 \right\}$.

We have the following straightforward observations:

\begin{lemma}
\label{lemma:observations}
The following hold for a subbundle $\F \subset \E$:
\begin{enumerate}
\item[1.] If $\mc{F}$ is invariant, then so is $\mc{F}^\bot$ (defined pointwise by taking the orthogonal subspace).
\item[2.] $\mc{F}$ is invariant if and only if for all $f \in C^\infty(\M,\mc{F})$, $\nabla^{\mc{E}}_X f \in C^\infty(\M,\mc{F})$.
\item[3.] If $\mc{F}$ is invariant, then one has the splitting: $(\mc{E},\nabla_X^{\mc{E}}) = (\mc{F},\nabla_X^{\mc{E}}|_{\mc{F}}) \oplus (\mc{F}^\bot,\nabla_X^{\mc{E}}|_{\mc{F}^\bot})$.
\item[4.] $\mc{F}$ is invariant if and only if $\nabla^{\mathrm{End}(\E)}_X \Pi_{\mc{F}} = 0$, where $\Pi_{\mc{F}}$ denotes the pointwise orthogonal projection onto $\mc{F}$.
\end{enumerate}
\end{lemma}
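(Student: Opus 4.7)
The proof plan is to first establish (4) as the pivotal algebraic reformulation, then deduce (1) and (2) from it (or vice versa), with (3) being a direct consequence.

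For (4), I would unpack what $\nabla^{\mathrm{End}(\E)}_X \Pi_{\mc F} = 0$ means in terms of parallel transport. The induced parallel transport on $\mathrm{End}(\E)$ along a flowline is $\widetilde C(x,t)(A) = C(x,t) \circ A \circ C(x,t)^{-1}$, so a section $A$ is parallel along the flow if and only if $C(x,t) A_x C(x,t)^{-1} = A_{\varphi_t(x)}$ for every $t$. Applied to $A = \Pi_{\mc F}$, comparing images yields the equivalence with $C(x,t)\mc F_x = \mc F_{\varphi_t(x)}$; since $\mc F$ has constant rank and $C(x,t)$ is invertible, the inclusion $C(x,t)\mc F_x \subset \mc F_{\varphi_t(x)}$ (which is exactly invariance) is already an equality.

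Item (1) then follows immediately from unitarity of $\nabla^{\E}$: the transport $C(x,t)$ is a unitary isomorphism and therefore preserves orthogonal complements, so $C(x,t)\mc F_x = \mc F_{\varphi_t(x)}$ forces $C(x,t)\mc F_x^\bot = \mc F_{\varphi_t(x)}^\bot$. For the forward direction of (2), given $f \in C^\infty(\M,\mc F)$, I would write $\nabla^{\E}_X f(x) = \lim_{t\to 0} t^{-1}\bigl(C(\varphi_t(x),-t) f(\varphi_t(x)) - f(x)\bigr)$; invariance places the whole difference quotient inside the vector space $\mc F_x$, so its limit lies there as well.

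For the converse direction of (2) I would upgrade the hypothesis to $\mc F^\bot$ using unitarity: for $f \in C^\infty(\M,\mc F), g \in C^\infty(\M,\mc F^\bot)$, the identity $0 = X\langle f,g\rangle = \langle \nabla^{\E}_X f, g\rangle + \langle f, \nabla^{\E}_X g\rangle$ with $\nabla^{\E}_X f \in \mc F$ gives $\langle f, \nabla^{\E}_X g\rangle = 0$ for all such $f$, hence $\nabla^{\E}_X g \in \mc F^\bot$. Decomposing an arbitrary section $s = \Pi_{\mc F} s + \Pi_{\mc F^\bot} s$ and applying the Leibniz rule
\[
(\nabla^{\mathrm{End}(\E)}_X \Pi_{\mc F})(s) = \nabla^{\E}_X(\Pi_{\mc F} s) - \Pi_{\mc F}(\nabla^{\E}_X s),
\]
the $\mc F$-part cancels and the $\mc F^\bot$-part vanishes under $\Pi_{\mc F}$, so $\nabla^{\mathrm{End}(\E)}_X \Pi_{\mc F} = 0$ and invariance follows from (4). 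Finally, (3) is immediate: (1) gives the orthogonal splitting $\E = \mc F \oplus \mc F^\bot$ of Hermitian bundles, and (2) applied to $\mc F$ and $\mc F^\bot$ shows that $\nabla^{\E}_X$ preserves each summand. The only subtle point is the unitarity input in both (1) and the converse of (2); with $\nabla^{\E}$ merely a linear connection, $\mc F$ invariant would not entail $\mc F^\bot$ invariant, so I would make sure to flag exactly where the Hermitian assumption enters.
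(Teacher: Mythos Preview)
Your argument is correct and somewhat more streamlined than the paper's. The paper proceeds in the order (1)--(4): for (1) it uses unitarity of $C(x,t)$ exactly as you do; for the forward direction of (2) it extends a vector in $\mc F_x^\bot$ by parallel transport along the orbit and differentiates the inner product; for the converse of (2) it works in a local frame adapted to $\mc F \oplus \mc F^\bot$, shows that the connection matrix $\Gamma(X)$ is block-diagonal, and then observes that the parallel-transport ODE decouples; (4) is finally deduced from (2) via the Leibniz identity in both directions. Your route---establishing (4) first through the conjugation formula $\widetilde C(x,t)(A)=C(x,t)A\,C(x,t)^{-1}$ for parallel transport on $\mathrm{End}(\E)$, and then closing the converse of (2) by feeding the Leibniz computation back into (4)---avoids the local-coordinate/ODE step entirely, which is a genuine simplification. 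One small point to tighten: in your proof of (4), the implication ``invariance $\Rightarrow$ $\Pi_{\mc F}$ parallel'' requires more than matching the images of $C(x,t)\Pi_{\mc F,x}C(x,t)^{-1}$ and $\Pi_{\mc F,\varphi_t(x)}$; you also need the kernels to agree, i.e.\ $C(x,t)\mc F_x^\bot = \mc F_{\varphi_t(x)}^\bot$, and this is again the unitarity of $C(x,t)$. You flag the unitarity input in (1) and in the converse of (2), so you should flag it here as well (or, alternatively, postpone this direction of (4) and obtain it from the Leibniz computation once (2) is in hand).
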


\begin{proof} We check the validity of each point separately:

\emph{Proof of 1.} Assume $\mc{F}$ is invariant and consider $x \in \M$ and $f_2 \in \mc{F}^\bot_x$. For $t \in \R$, consider $f_1' \in \mc{F}_{\varphi_t(x)}$; since $\mc{F}$ is invariant, it can be written as $f_1' = C(x,t)f_1$, for some $f_1 \in \mc{F}_x$. Then:
\[
\langle f'_1, C(x,t) f_2 \rangle_{\F_{\varphi_t x}} = \langle C(x,t) f_1, C(x,t)f_2 \rangle_{\F_{\varphi_t x}} = \langle f_1,f_2 \rangle_{\F_x} = 0,
\]
and thus $C(x,t)f_2 \in \mc{F}^\bot_{\varphi_t(x)}$, that is $\mc{F}^\bot$ is invariant.

\emph{Proof of 2.} Assume firstly $\F$ is invariant. Consider $f_1 \in C^\infty(\M,\mc{F})$ and $x\in \M$. Consider $f_2 \in \mc{F}^\bot_x$ and extend $f_2$ by parallel transport along $(\varphi_t(x))_{t \in (-\eps,\eps)}$ for some $\eps > 0$. By the first item, $f_2$ is a section of $\mc{F}^\bot$. Thus:
\[
X \cdot \langle f_1, f_2 \rangle_{\E} (x) = 0 = \langle \nabla^{\mc{E}}_X f_1, f_2 \rangle_{\E_x} + \langle f_1, \underbrace{\nabla^{\mc{E}}_X f_2}_{=0} \rangle_{\E_x},
\]
and thus in particular $\langle \nabla^{\mc{E}}_X f_1, f_2 \rangle_{\E_x} = 0$. Conversely, assume $\mc{F}$ is a subbundle of $\mc{E}$ such that for all $f_1 \in C^\infty(\M,\mc{F})$, $\nabla_X f_1 \in C^\infty(\M,\mc{F})$. This is also true for $\mc{F}^\bot$: indeed, if $f_2 \in C^\infty(\M,\mc{F}^\bot)$, then $\langle f_1, \nabla_X f_2 \rangle_{\E} = X \cdot \langle f_1, f_2 \rangle_{\E} - \langle \nabla_X f_1, f_2 \rangle_{\E} = 0$, i.e. $\nabla_X f_2 \in C^\infty(\M,\mc{F}^\bot)$. Now, consider $x_0 \in \M$, a local chart $\Omega_{x_0}$ around $x_0$, and a local orthonormal frame $(e_1,...,e_r)$ of $\E|_{\Omega_{x_0}} = \Omega_{x_0} \times \mathbb{C}^r$
 such that $(e_1,...,e_k)$ is a frame for $\mc{F}|_{\Omega_{x_0}}$ and $(e_{k+1},...,e_r)$ a frame for $\mc{F}^\bot|_{\Omega_{x_0}}$. 
On $\Omega_{x_0}$, the connection can be written as $\nabla^{\mc{E}} = d + \Gamma$. We claim that for every $x \in \Omega_{x_0}$, 
$\Gamma(X)(\F_x) \subset \F_x$. 
Indeed, 
consider $f = \sum_{i=1}^k f_i e_i \in \mc{F}_x$ and smooth functions $\widetilde{f}_1,...,\widetilde{f}_k$ defined around $x_0$ such that $\dd \widetilde{f}_i(x) = 0$ and $\widetilde{f}_i(x) = f_i$, and set $\widetilde{f} := \sum_{i=1}^k \widetilde{f}_i e_i$. Then $\nabla^{\mc{E}}_X \widetilde{f} (x) = \Gamma(X) \widetilde{f} (x) = \Gamma(X) f \in \mc{F}_x$ by assumption. Analogously, 
we have $\Gamma(X)(\F_x^\bot) \subset \F_x^\bot$ for every $x \in \Omega_{x_0}$. We then obtain that for $f \in \mc{E}_x$ and $x \in  \Omega_{x_0}$, writing $f(t) := C(x,t)f = (f_1(t),f_2(t))$ with $(f_1(t),0) \in \mc{F}_{\varphi_t(x)}, (0,f_2(t)) \in \mc{F}^\bot_{\varphi_t(x)}$, we have two separate differential equations for the parallel transport: $\dot{f}_1(t) = -\Gamma_{\mc{F}}(t)f_1(t), \dot{f}_2(t) = -\Gamma_{\mc{F}^\bot}(t) f_2(t)$. As a consequence, if $f_2(0)=0$, then $f_2(t) = 0$ for all $t$, which proves the claim.

\vspace{1pt}

\emph{Proof of 3.} This is immediate by the previous item.
\vspace{1pt}

\emph{Proof of 4.} Assume $\mc{F}$ is invariant. Then any $f \in C^\infty(M,\mc{E})$ can be decomposed as $f=f_1+f_2$, where $f_1 = \Pi_{\mc{F}}f, f_2 = \Pi_{\mc{F}^\bot}f$ and by the second item:
\[
(\nabla^{\End (\E)}_X \Pi_{\mc{F}})f = \nabla^{\mc{E}}_X (\Pi_{\mc{F}}f) - \Pi_{\mc{F}}(\nabla^{\mc{E}}_X f) =  \nabla^{\mc{E}}_X f_1 -  \Pi_{\mc{F}}(\underbrace{\nabla^{\mc{E}}_X f_1}_{\in \mc{F}} + \underbrace{\nabla^{\mc{E}}_X f_2}_{\in \mc{F}^\bot}) = \nabla^{\mc{E}}_X f_1 -\nabla^{\mc{E}}_X f_1 = 0.
\]
Conversely, if $\nabla^{\End(\E)}_X \Pi_{\mc{F}} = 0$, then for any $f \in C^\infty(\M,\mc{F})$, one has
\[
0 = (\nabla^{\End(\E)}_X \Pi_{\mc{F}}) f = \nabla^{\mc{E}}_X f -  \Pi_{\mc{F}}(\nabla^{\mc{E}}_X f),
\]
that is $\nabla^{\mc{E}}_X f \in C^\infty(\M,\mc{F})$ so $\mc{F}$ is invariant by the second item.
\end{proof}

\begin{remark}
As mentioned earlier, in the usual terminology, a connection such that the holonomy is trivial on all closed orbits is called \emph{transparent} (see \cite{Paternain-09,Paternain-11,Paternain-12, Paternain-13} for further details). This implies the existence of a global smooth orthonormal basis $(e_1,...,e_r)$ of $\mc{E}$ such that $\nabla^{\mc{E}}_X e_i = 0$ (see \cite{Cekic-Lefeuvre-20}), that is $\ker \nabla^{\mc{E}}_X$ has maximal dimension equal to $r$ (by the last item of Lemma \ref{lemma:resonances-zero}). These elements give rise to a global basis $(e_i^* \otimes e_j)_{i,j=1}^r$ of $\mathrm{End}(\E)$ such that $\nabla^{\mathrm{End}(\E)}_X (e_i^* \otimes e_j)=0$, that is $\ker \nabla^{\mathrm{End}(\E)}_X$ has also maximal dimension equal to $r^2$. 
\end{remark}

\begin{example}
Here are some examples to illustrate the terminology: if $(M,g)$ is a surface, and $\pi : \M = SM \rightarrow M$ denotes the projection, then the geodesic lift of the vector bundle $\pi^* TM \rightarrow SM$ is transparent. If $(M,g)$ is an $n$-manifold for $n \geq 3$, then the geodesic lift of $TM \rightarrow M$ might not be transparent and is never opaque. Indeed, the direction of the geodesic vector field is always preserved (i.e. the section $s(x, v) = v$ satisfies $(\pi^*\nabla_{\mathrm{LC}})_X s = 0$, where $\nabla_{\mathrm{LC}}$ is the Levi-Civita connection) but there may be some holonomy transversally to it. Beware, if $\mc{E} \to \M$ is a line bundle, it is always opaque but might not be transparent.
\end{example}
%

If $u \in C^\infty(\M,\mathrm{End}(\E))$, then $u = u_R + u_I$, where $u_R := \frac{u + u^*}{2}$ is hermitian and $u_I := \frac{u-u^*}{2}$ is skew-Hermitian. Since $\nabla^{\mathrm{End}(\E)}_X(u^*) = (\nabla^{\mathrm{End}(\E)}_X u)^*$ (see Lemma \ref{lemma:commutation-adjoint}), one obtains that $\nabla^{\mathrm{End}(\E)}_X u = \nabla^{\mathrm{End}(\E)}_X u_R + \nabla^{\mathrm{End}(\E)}_X u_I$ is the decomposition into Hermitian and skew-Hermitian parts of $\nabla^{\mathrm{End}(\E)}_X u$. Thus, $\nabla^{\mathrm{End}(\E)}_X u = 0$, if and only if $u = u_1 + i u_2$, where $\nabla^{\mathrm{End}(\E)}_X u_j = 0$ and $u_j^*=u_j$ for $j = 1, 2$. In other words,
\begin{equation}\label{eq:skewHermitiandiscussion}
\ker(\nabla^{\mathrm{End}(\E)}_X)_\C =\left(\ker(\nabla^{\mathrm{End}(\E)}_X) \cap \ker({\bullet}^*-\mathbbm{1}_{\mc{E}})\right)_\R \oplus i \times \left(\ker(\nabla^{\mathrm{End}(\E)}_X) \cap \ker({\bullet}^*-\mathbbm{1}_{\mc{E}})\right)_\R,
\end{equation}
where the subscript $\R$ or $\C$ indicates that it is seen as an $\R$- or $\C$-vector space. We have the following picture:

\begin{lemma}
\label{lemma:seldjointdecomposition}
If $u \in \ker(\nabla^{\mathrm{End}(\E)}_X),u=u^*$, then:
\begin{itemize}
\item At each point $x \in \M$, there exists a smooth orthogonal splitting $\mc{E}_x = {\oplus_{i=1}^k} \mc{E}_i(x)$ such that each $\mc{E}_i$ is 
invariant and $\mc{E}_i \rightarrow \M$ is a well-defined subbundle of $\mc{E} \rightarrow \M$,
\item For all $x \in \M$, $u(x) = \sum_{i=1}^k \lambda_i \Pi_i(x)$, where $\Pi_i(x)$ is the orthogonal projection onto $\mc{E}_i$ (with kernel $\oplus_{j=1,j\neq i}^k \mc{E}_i)$, $\lambda_i$ are the \emph{distinct} eigenvalues of $u$,
\item Each projection satisfies $\nabla^{\mathrm{End}(\E)}_X \Pi_i = 0$.
\end{itemize}
\end{lemma}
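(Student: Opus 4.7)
The plan is to translate the condition $\nabla^{\mathrm{End}(\mc{E})}_X u = 0$ into a parallel transport statement and then apply holomorphic functional calculus fiberwise. First I would observe that both $t \mapsto u(\varphi_t(x))$ and $t \mapsto C(x,t)\, u(x)\, C(x,t)^{-1}$ agree at $t=0$ and satisfy the same linear ODE along the flowline, yielding the intertwining identity
\[
u(\varphi_t(x)) = C(x,t)\, u(x)\, C(x,t)^{-1}, \quad x \in \M, \; t \in \R.
\]
In particular, the coefficients of the characteristic polynomial of $u(x)$ are continuous flow-invariant functions on $\M = SM$. Since $(M,g)$ is Anosov, the geodesic flow preserves Liouville measure and is ergodic, hence topologically transitive, so any continuous flow-invariant function is constant. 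Therefore the distinct eigenvalues $\lambda_1,\ldots,\lambda_k \in \R$ of $u(x)$, as well as their multiplicities, do not depend on $x$.

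Once the spectrum is known to be constant, I would define the spectral projectors by the Riesz formula
\[
\Pi_i(x) := \dfrac{1}{2\pi i} \oint_{\gamma_i} (z - u(x))^{-1} \, \dd z,
\]
where $\gamma_i$ is a small positively oriented circle enclosing only $\lambda_i$. Smoothness of $u$ together with the uniform gap between the $\lambda_i$'s ensures that $\Pi_i \in C^\infty(\M, \End(\mc{E}))$. The standard properties of the holomorphic functional calculus then deliver simultaneously that $\Pi_i(x)$ is the orthogonal projection onto $\mc{E}_i(x) := \ker(u(x) - \lambda_i)$, that $\Pi_i \Pi_j = \delta_{ij} \Pi_i$, $\sum_i \Pi_i = \mathbbm{1}_{\mc{E}}$, and $u = \sum_i \lambda_i \Pi_i$. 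The ranges $\mc{E}_i := \ran \Pi_i$ furnish the desired smooth orthogonal decomposition $\mc{E} = \oplus_{i=1}^k \mc{E}_i$.

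For the last bullet point, I would apply the same holomorphic functional calculus to the intertwining relation above to obtain $\Pi_i(\varphi_t(x)) = C(x,t)\, \Pi_i(x)\, C(x,t)^{-1}$; differentiating at $t=0$ gives precisely $\nabla^{\mathrm{End}(\mc{E})}_X \Pi_i = 0$, and item 4 of Lemma \ref{lemma:observations} then upgrades each $\mc{E}_i$ to an invariant subbundle of $\mc{E}$. The main obstacle is the constancy of the spectrum: without it, smoothness alone only yields continuous variation of eigenvalues (with possible crossings that would destroy the smoothness of the projectors), and it is exactly topological transitivity of the Anosov geodesic flow that forces the eigenvalues to be globally constant and makes the Riesz contour construction produce genuine smooth subbundles.
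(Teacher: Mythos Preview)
Your proposal is correct and follows essentially the same route as the paper: both arguments use transitivity of the Anosov flow to show the spectrum of $u$ is globally constant, then build the $\Pi_i$ via the Riesz contour formula, and finally verify $\nabla^{\mathrm{End}(\mc{E})}_X \Pi_i = 0$. The only cosmetic differences are that the paper phrases the constancy step by fixing a parallel frame along a single dense orbit (rather than invoking the intertwining identity $u\circ\varphi_t = C(\cdot,t)\,u\,C(\cdot,t)^{-1}$ directly), and obtains $\nabla^{\mathrm{End}(\mc{E})}_X \Pi_i = 0$ by differentiating under the contour integral instead of first conjugating the resolvent by $C(\cdot,t)$; the content is identical.
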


Note that a pedantic way of reformulating the fact that $\nabla^{\mc{E}}$ is transparent would be to say that there exists a (maximally) invariant orthogonal splitting such that for all $x \in \M$:
\[
\mc{E}_x = {\oplus_{i=1}^r}^\bot \mc{E}_i(x).
\]
Moreover, $\mc{E}_i = \C \cdot e_i$ for some $e_i \in C^\infty(SM,\mc{E})$ such that $\nabla_X e_i = 0$.

\begin{proof}
Consider a dense orbit $\mc{O}(x_0)$, and a basis $(e_i)_{i=1}^r$ of $\mc{E}|_{\mc{O}(x_0)}$ that is invariant by parallel transport along the orbit. Then $u$ can be written as $u = \sum_{i,j=1}^r \lambda_{ij} e_i^* \otimes e_j$ for some smooth functions $\lambda_{ij} \in C^\infty(\mc{O}(x_0))$ and:
\[
\begin{split}
\nabla^{\End(\E)}_X u &  = \sum_{i,j=1}^r X \lambda_{ij} e_i^* \otimes e_j + \sum_{i,j=1}^r  \lambda_{ij} (\nabla_X e_i)^* \otimes e_j + \sum_{i,j=1}^r  \lambda_{ij} e_i^* \otimes  \nabla_X e_j \\
& = \sum_{i,j=1}^r X \lambda_{ij} e_i^* \otimes e_j = 0,
\end{split}
\]
thus $\lambda_{ij}$ are constant along $\mc{O}(x_0)$. This implies that the distinct eigenvalues of $u$ are constant along $\mc{O}(x_0)$ and thus constant on $\M$ (the eigenvalues counted with multiplicity are continuous on $\M$, thus uniformly continuous since $\M$ is compact; since they are constant on a dense set, they are constant everywhere). We denote the distinct ones by $\lambda_1, ..., \lambda_k$ and introduce for all $x \in \M$:
\[
\Pi_i(x) := \dfrac{1}{2 \pi i} \int_{\gamma_i} (u(x)-\lambda_i \mathbbm{1}_{\mc{E}})^{-1} \dd \lambda,
\]
where $\gamma_i$ is a small (counter clockwise oriented) circle around $\lambda_i$. One has: $u = \sum_i \lambda_i \Pi_i$. Observe that
\[
\nabla^{\End (\E)}_X \Pi_i =- \dfrac{1}{2 \pi i} \int_{\gamma_i} (u(x)-\lambda_i \mathbbm{1}_{\mc{E}})^{-1} \left(\nabla^{\End(\E)}_X (u(x)-\lambda_i \mathbbm{1}_{\mc{E}})\right) (u(x)-\lambda_i \mathbbm{1}_{\mc{E}})^{-1} \dd \lambda = 0.
\]
\end{proof}

We have the following characterization of opaque connections:

\begin{lemma}
\label{lemma:characterization-opaque}
The connection $\nabla^{\mc{E}}$ is opaque if and only if the Pollicott-Ruelle resonant states of $\nabla^{\End(\E)}_X$ are reduced to $\mathbbm{1}_{\mc{E}}$ i.e. $\ker (\nabla^{\End(\E)}_X|_{\mc{H}^s_\pm}) = \C\cdot \mathbbm{1}_{\mc{E}}$.
\end{lemma}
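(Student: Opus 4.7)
The plan is to prove the two implications by exploiting the dictionary between invariant subbundles and orthogonal projections established in Lemma~\ref{lemma:observations}, together with the spectral decomposition of Hermitian resonant states in Lemma~\ref{lemma:seldjointdecomposition}.

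For the reverse implication, the argument should be short: let $\mc{F} \subset \mc{E}$ be any smooth invariant subbundle and consider the pointwise orthogonal projection $\Pi_{\mc{F}} \in C^\infty(\M, \End(\mc{E}))$. By item~4 of Lemma~\ref{lemma:observations}, $\nabla^{\End(\E)}_X \Pi_{\mc{F}} = 0$. Since $\Pi_{\mc{F}}$ is smooth, it belongs to $\mc{H}^s_\pm$ for every $s$, so under the hypothesis it must equal $c \cdot \mathbbm{1}_{\mc{E}}$ for some $c \in \C$. Being a pointwise orthogonal projection forces $c \in \{0,1\}$, hence $\mc{F} = \{0\}$ or $\mc{F} = \mc{E}$.

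For the forward implication, assume $\nabla^{\mc{E}}$ is opaque and let $u \in \ker(\nabla^{\End(\E)}_X|_{\mc{H}^s_\pm})$. First I would invoke Lemma~\ref{lemma:resonances-zero} (which applies in this setting because $\nabla^{\End(\E)}_X$ is still formally skew-adjoint) to upgrade $u$ to a smooth section. Next, using the decomposition \eqref{eq:skewHermitiandiscussion}, write $u = u_1 + i u_2$ with both $u_j$ Hermitian and still in $\ker \nabla^{\End(\E)}_X$. It therefore suffices to show that any Hermitian $u \in \ker \nabla^{\End(\E)}_X$ is a real multiple of $\mathbbm{1}_{\mc{E}}$.

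Applied to such a $u$, Lemma~\ref{lemma:seldjointdecomposition} produces an orthogonal splitting $\mc{E} = \oplus_{i=1}^k \mc{E}_i$ into smooth invariant subbundles associated to the \emph{distinct} eigenvalues $\lambda_1,\dots,\lambda_k$ of $u$. Each $\mc{E}_i$ is nontrivial by construction, so the opacity hypothesis forces $k=1$ and $\mc{E}_1 = \mc{E}$, which means $u = \lambda_1 \mathbbm{1}_{\mc{E}}$. Applying this to $u_1$ and $u_2$ yields $u \in \C \cdot \mathbbm{1}_{\mc{E}}$, as desired. There is no serious obstacle here: the whole lemma is essentially a repackaging of Lemmas~\ref{lemma:observations}, \ref{lemma:resonances-zero}, and \ref{lemma:seldjointdecomposition}; the only point requiring a little care is to notice that the identity $\Pi_{\mc{F}}$ for a smooth invariant subbundle automatically lies in the anisotropic spaces so that the hypothesis of the reverse direction can be applied to it.
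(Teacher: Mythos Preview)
Your proof is correct and follows essentially the same route as the paper: both directions hinge on Lemma~\ref{lemma:observations} (invariant subbundles $\leftrightarrow$ projections in the kernel) and on the spectral decomposition of Hermitian resonant states from Lemma~\ref{lemma:seldjointdecomposition}. The only cosmetic difference is that the paper phrases both implications contrapositively (assume not opaque / assume the kernel is larger than $\C\cdot\mathbbm{1}_{\mc{E}}$ and derive the other failure), whereas you argue directly; the content is the same.
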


\begin{proof}
``$\implies$'' Assume that the connection is opaque and $\ker (\nabla^{\End(\E)}_X|_{\mc{H}^s_\pm}) \neq \C\cdot \mathbbm{1}_{\mc{E}}$, then one can consider $0 \neq u \in \ker (\nabla^{\End(\E)}_X|_{\mc{H}^s_\pm})$ which is orthogonal to $\C\cdot \mathbbm{1}_{\mc{E}}$ (i.e. its trace vanishes everywhere on $\M$). 
Taking its self-adjoint or $i$ times the skew-adjoint part, by the previous discussion we may additionally assume $u^* = u$ and $u \neq 0$. By Lemma \ref{lemma:seldjointdecomposition}, it can be decomposed as $u = \sum_{i=1}^k \lambda_i \Pi_i$, where each $\Pi_i$ is the orthogonal projection corresponding to an invariant subbundle $\mc{E}_i \rightarrow \M$, i.e. $\nabla^{\End(\E)}_X \Pi_i = 0$. Observe that since $\Tr(u) = 0$, this decomposition cannot be the trivial one i.e. $\mc{E} = \mc{E} \oplus^\bot \left\{ 0 \right\}$ (in which case $u$ would be a multiple of $\mathbbm{1}_{\mc{E}}$). Thus, $\mc{E}_1$ is an invariant subbundle which is neither $\left\{0 \right\}$ nor $\mc{E}$ which contradicts the fact that the connection is opaque. \\

`` $\Longleftarrow$'' Conversely, if the connection is not opaque, then it admits an invariant subbundle $\mc{F}$ and $\mc{E} = \mc{F} \oplus^\bot \mc{F}^\bot$ is an invariant decomposition. The orthogonal projection $\Pi_{\mc{F}}$ satisfies $\nabla^{\End(\E)}_X \Pi_{\mc{F}} = 0$ by Lemma \ref{lemma:observations}, thus $\ker (\nabla^{\End(\E)}_X|_{\mc{H}^s_\pm}) \neq \C\cdot \mathbbm{1}_{\mc{E}}$.
\end{proof}

\subsection{Proof of the genericity of opaque connections in the geodesic case}


Here, we consider the geodesic case $\M = SM$ and $\mc{E} \rightarrow M$ is a Hermitian vector bundle of rank $r \geq 1$ with unitary connection $\nabla^{\mc{E}}$. As in the previous sections, we consider the pullback bundle $\pi^* \E \rightarrow SM$ (where $\pi : SM \rightarrow M$ is the projection) equipped with the pullback connection $\pi^* \nabla^{\mc{E}}$. The parallel transport is considered with respect to this connection along the flowlines of the geodesic vector field $X$. In this paragraph, we prove Theorem \ref{theorem:generic-opacity}, namely that there is an open dense subset of $C^{k}$ unitary connections (for $k \geq 2$) on $\E \rightarrow M$ such that the induced connection on $SM$ is opaque.

\begin{proof}[Proof of Theorem \ref{theorem:generic-opacity}]
Openness can be seen from Lemma \ref{lemma:characterization-opaque}. Indeed, if $\nabla^{\mc{E}}$ is opaque then $\X := (\pi^*\nabla^{\mathrm{End}(\E)})_X$ has $\mathbbm{1}_{\mc{E}}$ as only resonant state at $0$. By continuity of the resonances (see \cite{Bonthonneau-19}), this is still the case for any $\X_A := (\pi^*(\nabla^{\mathrm{End}(\E)} + [A,\cdot]))_X$ induced by a perturbation $\nabla^{\mc{E}} +A$.\footnote{The argument is much easier compared to \cite{Bonthonneau-19}: we can keep the anisotropic Sobolev space $\mc{H}_+^s$ fixed and consider $[A(X),\cdot]$ as a perturbation given by a potential.} Density follows from a perturbation argument of spectral theory: we use Lemma \ref{lemma:characterization-opaque} and show that a well-chosen perturbation ejects all the resonances at $0$ of $\nabla^{\End(\E)}_X$, except the one given by identity $\mathbbm{1}_{\E}$.

We now assume that $\nabla^{\mc{E}}$ is not opaque. We define $\nabla_s := \nabla^{\mc{E}} + s A$, where $A \in C^\infty(M,T^*M \otimes \mathrm{End}_{\mathrm{sk}}(\mc{E}))$. We set $\nabla^{\End(\E)}_s := \nabla^{\mathrm{End}(\E)} + s [A,\cdot]$ and thus $(\pi^* \nabla^{\mathrm{End}(\E)}_s)_X = (\pi^*\nabla^{\mathrm{End}(\E)})_X + s [\pi^*A(X),\cdot]$. We introduce $\X_s := (\pi^* \nabla^{\mathrm{End}(\E)}_s)_X$, $\X := \X_0$. Following \S\ref{ssection:resonances}, the operator $\C \ni z \mapsto (-\X_s -z)^{-1}=: \RR_{+}(z,s)$ is meromorphic with poles in $\left\{\Re(z) \leq 0\right\}$ of finite rank. We introduce:
\begin{equation}\label{eq:Pi_0^+def}
\Pi^+_s := - \dfrac{1}{2\pi i} \int_\gamma \RR_{+}(z,s) \dd z = \dfrac{1}{2\pi i} \int_\gamma (\X_s + z)^{-1} \dd z,
\end{equation}
where $\gamma$ is a small counter-clockwise oriented circle centred around $0$ and we use the abuse of notation $-(\X_s + z)^{-1} = \RR_{+}(z,s)$ for the meromorphic extension from $\left\{\Re(z) > 0\right\}$ to $\C$ using the anisotropic spaces $\mc{H}^s_+$. For $s=0$, we have $\Pi^+_{s=0} = \Pi^+_0$ the $L^2$-orthogonal projection on the resonant states at $0$ but for $s\neq 0$, this is the projection onto the (direct) sum of all the resonant states generated by resonances inside the small circle $\gamma$. We set:
\[
\lambda^+_s := \Tr(-\X_s \Pi^+_s) = \Tr\left(-\X_s  \dfrac{1}{2\pi i} \int_\gamma (\X_s + z)^{-1} \dd z \right)
\]
This is the sum of the resonances inside the small circle $\gamma$ (note that the resonances are necessarily contained in $\left\{\Re(z) \leq 0\right\}$). The map $\R \ni s \mapsto \lambda^+_s \in \C$ is (at least) $C^2$ if $A$ is smooth. We introduce the notation $P_A : C^\infty(SM,\mathrm{End}(\mc{E})) \rightarrow C^\infty(SM, \mathrm{End}(\E))$ for the map
\begin{equation}
\label{equation:commutator-p}
P_Au(x,v) := [A_x(v),u(x,v)],
\end{equation}
so that $\X_s = \X + sP_A$. We now compute the first- and second-order derivatives of $s \mapsto \lambda^+_s$ at $s=0$:

\begin{lemma}
\label{lemma:perturbation-dx-1}
We have:
\[
\dot{\lambda}^+_0 = -\Tr(P_A \Pi^+_0) = -\sum_{i=1}^p \langle P_A u_i, u_i \rangle_{L^2} = 0,
\]
where $u_1, ..., u_p$ is a smooth orthonormal basis of $\ker(\X)$.
\end{lemma}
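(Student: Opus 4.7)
The approach is to differentiate the trace expression $\lambda_s^+ = \Tr(-\X_s\Pi_s^+)$ at $s = 0$ via the product rule, writing
\[
\dot{\lambda}_0^+ \;=\; -\Tr(P_A \Pi_0^+) \;-\; \Tr(\X \dot{\Pi}_0^+),
\]
and verifying both that the second term vanishes and that the first term actually equals zero.

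The second contribution is the ``generic'' part that always vanishes for a perturbation of this shape, by a residue argument analogous to the one in Lemma \ref{lemma:first-variation}. I would differentiate the contour definition \eqref{eq:Pi_0^+def}, use the Laurent expansion $(\X+z)^{-1} = \Pi_0^+/z + \RR_0^+ + \mc{O}(z)$ near $z = 0$ to obtain $\dot{\Pi}_0^+ = -\Pi_0^+ P_A \RR_0^+ - \RR_0^+ P_A \Pi_0^+$, then multiply on the left by $\X$ and invoke \eqref{eq:resolventrelations} (in particular $\X\Pi_0^+ = 0$ and $\X\RR_0^+ = \mathbbm{1} - \Pi_0^+$) to reach $\X\dot{\Pi}_0^+ = -P_A\Pi_0^+ + \Pi_0^+ P_A\Pi_0^+$. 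Taking the trace and using $(\Pi_0^+)^2 = \Pi_0^+$ together with cyclicity then makes the two summands cancel.

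The substantive claim, and where I expect the main obstacle to lie, is $\Tr(P_A\Pi_0^+) = 0$. A priori, since $\pi^*A(X)$ is pointwise skew-Hermitian, $P_A$ is skew-adjoint on $L^2(SM,\End(\pi^*\mc{E}))$, so $\Pi_0^+ P_A\Pi_0^+$ is skew-adjoint and its trace is only guaranteed to be purely imaginary. The key observation is that $\Tr(P_A\Pi_0^+) = \sum_i\langle P_A u_i, u_i\rangle_{L^2}$ is independent of the choice of orthonormal basis of $\ker\X$, which lets us pick a convenient one. By the decomposition \eqref{eq:skewHermitiandiscussion}, $\ker\X$ is the complexification of its Hermitian part $V_H := \ker\X\cap\{u^* = u\}$; and since for Hermitian sections $\langle u,v\rangle_{L^2} = \int_{SM}\Tr(uv)\,d\mu$ is automatically real, any real orthonormal basis of $V_H$ is also a complex orthonormal basis of $\ker\X$.

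Choosing $(u_1,\dots,u_p)$ in this Hermitian form, the trace identity $\Tr([a,b]c) = \Tr(a[b,c])$ rewrites
\[
\langle P_A u_i, u_i\rangle_{L^2} \;=\; \int_{SM}\Tr\bigl(\pi^*A(X)(x,v)\,[u_i(x,v),\,u_i(x,v)^*]\bigr)\,d\mu(x,v),
\]
which vanishes pointwise since $u_i^* = u_i$ forces $[u_i, u_i^*] = [u_i, u_i] = 0$. Summing over $i$ yields $\Tr(P_A\Pi_0^+) = 0$, finishing the proof.
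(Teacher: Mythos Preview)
Your argument is correct, and the first half (showing $\dot{\lambda}_0^+ = -\Tr(P_A\Pi_0^+)$ via the residue/resolvent identities) matches the paper's proof exactly. For the vanishing $\Tr(P_A\Pi_0^+)=0$, however, you take a genuinely different route. The paper argues by \emph{parity in $v$}: one may choose each $u_i$ to be either even or odd under $v\mapsto -v$, and since $\pi^*A(X)$ is odd (being the pullback of a $1$-form), the change of variables $v'=-v$ in $\langle P_A u_i,u_i\rangle_{L^2}$ flips the sign of the integral, forcing it to vanish. You instead exploit the \emph{Hermitian structure} of $\ker\X$ via \eqref{eq:skewHermitiandiscussion}, choosing the $u_i$ with $u_i^*=u_i$ so that $[u_i,u_i^*]=0$ pointwise.

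Both arguments are short and clean; what they buy is different. The paper's parity argument uses only that $A$ is pulled back from a $1$-form on $M$ and would go through for any bundle $\E$ over $SM$ (not necessarily an endomorphism bundle). Your argument, by contrast, is purely algebraic and makes no use of the sphere-bundle structure of $SM$ or the fact that $P_A$ has degree one in $v$; it would work verbatim for any Anosov flow on an abstract $\M$ and any skew-adjoint perturbation of the form $P_A=[A,\cdot]$, but it is specific to the endomorphism bundle and the $\mathrm{Ad}$-invariance of $\ker\X$ recorded around \eqref{eq:skewHermitiandiscussion}.
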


\begin{proof}
By the definition \eqref{eq:Pi_0^+def} of $\Pi_0^+$, we compute $\dot{\Pi}_0^+ = - \RR_0^+ P_A \Pi_0^+ - \Pi_0^+ P_A \RR_0^+$. Using the relation $\dot{\lambda}_0^+ = -\Tr(P_A \Pi_0^+ + \X \dot{\Pi}_0^+)$, the cyclicality of the trace and \eqref{eq:resolventrelations}, the first two equalities follow. As to the fact that it is $0$, it follows from the following argument showing that each term is the sum is actually $0$. First of all, we can always assume that the $u_i$ are either \emph{odd} or \emph{even} i.e. they only have odd or even terms in their Fourier expansions. Indeed, splitting between odd/even, the equation $\X u = 0$ yields $\X u_{\mathrm{even}} = 0, \X u_{\mathrm{odd}} = 0$ and odd/even functions are orthogonal with respect to the $L^2$-scalar product. As a consequence, we can always assume that the $u_i$ are either odd or even. 
Assume $u_i(x, -v) = (-1)^k u_i(x, v)$ for some $k \in \{0, 1\}$. Then, writing $\dd \mu$ for the Liouville measure on $SM$ and making the change of variable $v' = -v$:
\[
\begin{split}
\langle P_A u_i, u_i \rangle_{L^2(SM)} & = \int_{SM} \Tr((P_A u_i) u_i^*)(x,v) \dd \mu(x,v) \\
& = \int_{SM} \Tr([A_x(v), u_i(x,v)] u_i^*(x,v)) \dd \mu(x,v) \\
& =- \int_{SM} \Tr([A_x(v), u_i(x,v)] u_i^*(x,v)) \dd \mu(x,v) = -\langle P_A u_i, u_i \rangle_{L^2},
\end{split}
\]
Thus $\langle P_A u_i, u_i \rangle_{L^2} = 0$, completing the proof. 
\end{proof}

As far as the second variation is concerned, we have:

\begin{lemma}
\label{lemma:perturbation-dx-2}
We have:
\[
\ddot{\lambda}^+_0 = 2 \Tr(\Pi^+_0 P_A \RR_0^+ P_A \Pi^+_0) = - 2 \sum_{i=1}^p \langle \RR_0^+ P_A u_i, P_A u_i \rangle_{L^2}.
\]
\end{lemma}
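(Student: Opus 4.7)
The plan is to differentiate the contour integral defining $\lambda_s^+$ twice in $s$. Using $\X_s(\X_s+z)^{-1} = \mathbbm{1} - z(\X_s+z)^{-1}$ together with $\int_\gamma \mathbbm{1}\, dz = 0$, I first rewrite
\[
\lambda_s^+ = \Tr\!\left(\frac{1}{2\pi i}\int_\gamma z(\X_s + z)^{-1}\, dz\right),
\]
which reduces the problem to computing the $s^2$-coefficient of the Neumann expansion
\[
(\X_s+z)^{-1} = \sum_{k\geq 0} (-s)^k\bigl((\X+z)^{-1} P_A\bigr)^k (\X+z)^{-1}.
\]
Differentiating twice and evaluating at $s=0$ yields
\[
\ddot{\lambda}_0^+ = \frac{2}{2\pi i}\int_\gamma z\, \Tr\bigl((\X+z)^{-1} P_A (\X+z)^{-1} P_A (\X+z)^{-1}\bigr)\, dz.
\]

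The main step is then a residue computation at $z=0$, via the Laurent expansion $(\X+z)^{-1} = \Pi_0^+/z + \RR_0^+ + O(z)$. After multiplying the triple product by $z$, only triples producing a simple pole contribute to the contour integral (the $z^{-2}$ and $z^{-3}$ contributions integrate to zero); these are exactly the three configurations containing precisely one holomorphic factor $\RR_0^+$, giving the residue
\[
\Pi_0^+ P_A \Pi_0^+ P_A \RR_0^+ + \Pi_0^+ P_A \RR_0^+ P_A \Pi_0^+ + \RR_0^+ P_A \Pi_0^+ P_A \Pi_0^+.
\]
Under the trace, cyclicity combined with the orthogonality relations $\Pi_0^+ \RR_0^+ = \RR_0^+ \Pi_0^+ = 0$ from \eqref{eq:resolventrelations} kills the first and third summands, leaving the first claimed identity
\[
\ddot{\lambda}_0^+ = 2\Tr(\Pi_0^+ P_A \RR_0^+ P_A \Pi_0^+).
\]

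For the second equality I expand $\Pi_0^+ = \sum_{i=1}^p \langle \cdot, u_i\rangle_{L^2} u_i$ using the smooth orthonormal basis of $\ker \X$ supplied by Lemma \ref{lemma:resonances-zero}, which gives $\ddot{\lambda}_0^+ = 2\sum_i \langle P_A \RR_0^+ P_A u_i, u_i\rangle_{L^2}$, and then transfer one factor $P_A$ across the pairing via skew-adjointness. The latter is the only input beyond bookkeeping: since $A_x(v) \in \mathrm{End}_{\mathrm{sk}}(\mc{E}_x)$, a direct computation with $A_x(v)^* = -A_x(v)$ and cyclicity of the matrix trace shows that $\mathrm{ad}(A_x(v))$ is skew-adjoint on $\mathrm{End}(\mc{E}_x)$ endowed with the Hilbert--Schmidt inner product, so $P_A^* = -P_A$ on $L^2(SM, \mathrm{End}(\pi^*\mc{E}))$. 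Moving one $P_A$ across produces the target formula
\[
\ddot{\lambda}_0^+ = -2\sum_{i=1}^p \langle \RR_0^+ P_A u_i, P_A u_i\rangle_{L^2}.
\]

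The delicate point to verify is that the Laurent series of $(\X+z)^{-1}$ at $z=0$ has no Jordan-block contribution (i.e., no $z^{-2}$ term), so that the three-term residue above is genuinely complete; this is precisely guaranteed by $\X \Pi_0^\pm = 0$ and $\X \RR_0^+ = \mathbbm{1} - \Pi_0^+$ in \eqref{eq:resolventrelations}, which assert that $\Pi_0^+$ is the full spectral projector at $0$ and $\RR_0^+$ a genuine partial inverse. Without this, additional nilpotent terms would appear in the residue and spoil the clean cancellation produced by $\Pi_0^+ \RR_0^+ = \RR_0^+ \Pi_0^+ = 0$.
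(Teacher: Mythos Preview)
Your proof is correct and arrives at the same formula, but the route differs from the paper's. The paper differentiates $\lambda_s^+ = \Tr(-\X_s \Pi_s^+)$ directly via the product rule, obtaining
\[
\ddot{\lambda}^+_0 = -\Tr(\ddot{\X}_0 \Pi_0^+) - 2\Tr(\dot{\X}_0 \dot{\Pi}^+_0) - \Tr(\X_0 \ddot{\Pi}^+_0),
\]
and then computes $\dot{\Pi}_0^+$ and the six-term expression for $\ddot{\Pi}_0^+$ separately before collapsing everything with the relations in \eqref{eq:resolventrelations}. Your trick of absorbing $\X_s$ into the integrand via $\X_s(\X_s+z)^{-1} = \mathbbm{1} - z(\X_s+z)^{-1}$, so that $\lambda_s^+$ becomes a single contour integral of $z(\X_s+z)^{-1}$, is cleaner: differentiating twice produces one triple-resolvent term, and the residue picks out exactly the three configurations with two $\Pi_0^+$'s and one $\RR_0^+$, two of which die by cyclicity and $\Pi_0^+\RR_0^+ = \RR_0^+\Pi_0^+ = 0$. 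This avoids the bookkeeping of $\ddot{\Pi}_0^+$ (including the $H_1$ terms the paper has to track). The paper's approach, on the other hand, is more pedestrian and makes the role of $\ddot{\X}_0 = 0$ explicit. A minor wording quibble: your parenthetical ``the $z^{-2}$ and $z^{-3}$ contributions integrate to zero'' is slightly ambiguous---after multiplying by $z$ there is in fact no $z^{-3}$ term, and the sole $z^{-2}$ term $\Pi_0^+ P_A \Pi_0^+ P_A \Pi_0^+$ indeed integrates to zero; but the argument is sound. The final passage to $-2\sum_i \langle \RR_0^+ P_A u_i, P_A u_i\rangle_{L^2}$ via $P_A^* = -P_A$ matches the paper exactly.
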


\begin{proof}
We start with:
\[
\ddot{\lambda}^+_0  = -\Tr(\ddot{\X}_0 \Pi_0^+) -2 \Tr(\dot{\X}_0 \dot{\Pi}^+_0) - \Tr(\X_0 \ddot{\Pi}^+_0).
\]
It is immediate that $\ddot{\X}_0 = 0$ since the variation is linear. We then compute:
\[
\dot{\Pi}^+_0 = - \Pi^+_0 \dot{\X}_0 \RR_0^+ - \RR_0^+ \dot{\X}_0 \Pi^+_0.
\]
Thus, by the cyclicity of trace:
\[
-2 \Tr(\dot{\X}_0 \dot{\Pi}^+_0) = 2 \Tr( \dot{\X}_0 \Pi^+_0 \dot{\X}_0 \RR_0^+ + \dot{\X}_0  \RR_0^+ \dot{\X}_0 \Pi^+_0) = 4 \Tr( \Pi^+_0  \dot{\X}_0  \RR_0^+ \dot{\X}_0 \Pi^+_0).
\]
And last but not least:
\begin{align*}
\ddot{\Pi}^+_0 &= 2 (\RR_0^+ \dot{\X}_0 \RR_0^+ \dot{\X}_0  \Pi_0^+ + \RR_0^+ \dot{\X}_0 \Pi_0^+ \dot{\X}_0 \RR_0^+ +  \Pi_0^+ \dot{\X}_0 \RR_0^+ \dot{\X}_0 \RR_0^+)\\
&+ 2(\Pi_0^+ \dot{\X}_0 \Pi_0^+ \dot{\X}_0 H_1 + \Pi_0^+ \dot{\X}_0 H_1 \dot{\X}_0 \Pi_0^+ + H_1 \dot{\X}_0 \Pi_0^+ \dot{\X}_0 \Pi_0^+).
\end{align*}
Here $H_1$ denotes the coefficient next to $z$ in the holomorphic expansion of $(\X + z)^{-1}$ close to zero, i.e. $(\X + z)^{-1} = \frac{\Pi_0^+}{z} + \RR_0^+ + zH_1 + \mc{O}(z^2)$. Thus, using the cyclicity of the trace, the fact that $\X \Pi^+_0 = \Pi^+_0 \X = 0$, $\RR_0^+ \Pi^+_0 = \Pi^+_0 \RR_0^+ = 0$ and $\X \RR_0^+ = \mathbbm{1} - \Pi_0^+$ (see \eqref{eq:resolventrelations}) we obtain:
\[
\begin{split}
\Tr(\X \ddot{\Pi}^+_0) & = 2 \times \Tr\Big(\X (\RR_0^+ \dot{\X}_0 \RR_0^+ \dot{\X}_0  \Pi_0^+ + \Pi_0^+ \dot{\X}_0 \RR_0^+ \dot{\X}_0 \RR_0^+ + \RR_0^+ \dot{\X}_0 \Pi_0^+ \dot{\X}_0 \RR_0^+)\Big) \\
& = 2 \Tr(\X \RR_0^+  \dot{\X}_0 \Pi_0^+ \dot{X}_0 \RR_0^+ ) \\
& = 2 \Tr((1-\Pi_0^+)  \dot{\X}_0 \Pi_0^+ \dot{\X}_0 \RR_0^+) \\
& = 2 \Tr(\dot{\X}_0 \Pi_0^+ \dot{\X} \RR_0^+) - 2 \Tr(\Pi_0^+ \dot{\X}_0 \Pi_0^+ \dot{\X}_0 \RR_0^+) \\
& = 2 \Tr(\Pi_0^+ \dot{\X}_0 \RR_0^+ \dot{\X}_0  \Pi_0^+).
\end{split}
\]
Thus, taking $u_1, ...,u_p$ an orthonormal basis of $\ker(\X)$ and using that $\dot{\X}_0^* = -\dot{\X}_0$ and $P_A = \dot{\X}_0$, we obtain:
\[
\begin{split}
\ddot{\lambda}^+_0 &= 2 \Tr(\Pi_0^+ \dot{\X}_0 \RR_0^+ \dot{\X}_0  \Pi_0^+) \\
 &= 2 \sum_{i=1}^p \langle \dot{\X}_0 \RR_0^+ \dot{\X}_0 u_i, u_i \rangle_{L^2} = - 2 \sum_{i=1}^p \langle  \RR_0^+ P_A u_i,  P_A u_i \rangle_{L^2}.
\end{split}
\]
\end{proof}
\noindent We could have introduced the other sum of resonances
\[
\lambda_s^- := \Tr(\X_s \Pi_s^-),
\]
using the other resolvent
\[
\Pi_s^- :=  - \dfrac{1}{2\pi i} \int_\gamma \RR_-(z,s) \dd z = \dfrac{1}{2\pi i} \int_\gamma (z - \X_s)^{-1} \dd z,
\]
still using the same abuse of notation. We have:

\begin{lemma}
$\overline{\lambda_s^-} = \lambda_s^+$.
\end{lemma}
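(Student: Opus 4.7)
The plan is to realize $\lambda_s^-$ as the complex conjugate of $\lambda_s^+$, using the formal skew-adjointness of $\X_s$ on $L^2(SM,\End(\pi^*\E);d\mu)$. Since $A$ takes values in $\End_{\sk}(\E)$, the perturbed connection $\nabla^{\E}+sA$ remains unitary, so the induced operator $\X_s = (\pi^*\nabla^{\End(\E)})_X + s[\pi^*A(X),\cdot]$ is formally skew-adjoint, and the duality $\RR_+(z,s)^* = \RR_-(\bar z,s)$ from \S\ref{ssection:resonances} persists for the meromorphic extensions on the dual pair $\mc{H}_\pm^s$.

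The central step is the symmetry $(\Pi_s^+)^* = \Pi_s^-$. Starting from
\[
\Pi_s^+ = \frac{1}{2\pi i} \int_\gamma (\X_s+z)^{-1}\, dz
\]
and using $((\X_s+z)^{-1})^* = (-\X_s+\bar z)^{-1}$, I would take the adjoint (which conjugates the scalar factor $\tfrac{1}{2\pi i}$) and perform the change of variable $w=\bar z$. Since $\gamma$ is a small circle centred at $0$, it is invariant under $z\mapsto \bar z$ up to orientation reversal; the sign produced by reversing the orientation cancels exactly the sign produced by conjugating $\tfrac{1}{2\pi i}$, yielding
\[
(\Pi_s^+)^* = \frac{1}{2\pi i}\int_\gamma (w-\X_s)^{-1}\, dw = \Pi_s^-.
\]
One should check here that the meromorphic extension argument is compatible with taking the adjoint, but this is exactly the $\mc{H}_+^s \leftrightarrow \mc{H}_-^s$ duality recalled in \S\ref{ssection:resonances}.

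With this identity in hand the claim follows in one line, using that both $\Pi_s^\pm$ are finite rank (so $-\X_s\Pi_s^\pm$ is trace class) and the cyclicity of the trace:
\[
\overline{\lambda_s^+} = \overline{\Tr(-\X_s\Pi_s^+)} = \Tr\bigl((-\X_s\Pi_s^+)^*\bigr) = \Tr\bigl((\Pi_s^+)^*\,\X_s\bigr) = \Tr(\X_s\,\Pi_s^-) = \lambda_s^-,
\]
and taking complex conjugates gives $\overline{\lambda_s^-} = \lambda_s^+$. The only subtle point in this plan is the careful sign bookkeeping when transferring the adjoint through the contour integral; no genuine analytic obstruction arises, since the anisotropic spaces are arranged precisely so that $\RR_+$ and $\RR_-$ are exchanged by duality.
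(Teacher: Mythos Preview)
Your proposal is correct and follows essentially the same route as the paper: you establish $(\Pi_s^+)^* = \Pi_s^-$ from the duality $\RR_+(z,s)^* = \RR_-(\bar z,s)$ (which the paper invokes via Lemma~\ref{lemma:relations-resolvent}, applied to the still skew-adjoint perturbed operator), and then run the identical trace computation $\overline{\lambda_s^+} = \Tr((-\X_s\Pi_s^+)^*) = \Tr(\X_s\Pi_s^-) = \lambda_s^-$. The only difference is that you spell out the contour/sign bookkeeping for $(\Pi_s^+)^* = \Pi_s^-$ explicitly, whereas the paper simply cites the earlier lemma.
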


\begin{proof}
By Lemma \ref{lemma:relations-resolvent} we have $(\Pi_s^+)^*  = \Pi_s^-$. Thus:
\[
\overline{\lambda_s^+} = \overline{ \Tr(- \X_s \Pi_s^+)} = \Tr((-\X_s \Pi_s^+)^*) = \Tr(\X_s (\Pi_s^+)^*) = \Tr(\X_s \Pi^-_s) = \lambda_s^{-}.
\]
\end{proof}

A similar computation to the one carried out in Lemmas \ref{lemma:perturbation-dx-1} and \ref{lemma:perturbation-dx-2}, also gives that $\dot{\lambda}^-_0 = 0$ and:
\[
\ddot{\lambda}^-_0 = - 2 \sum_{i=1}^p \langle \RR_0^- P_A u_i, P_A u_i \rangle_{L^2}
\]
As a consequence, we obtain the formula, writing $\lambda_s := \lambda_s^+$
\[
\Re(\ddot{\lambda}_0) = - \sum_{i=1}^p \langle \Pi P_A u_i, P_A u_i \rangle_{L^2},
\]
where $\Pi$ was introduced in \eqref{equation:pi}. This can be rewritten, combining Lemmas \ref{lemma:perturbation-dx-1} and \ref{lemma:perturbation-dx-2}:
\[
\Re(\lambda_s) = \Re(\lambda_0) + s \Re(\dot{\lambda}_0) + 1/2 \times s^2 \Re(\ddot{\lambda}_0) + \mc{O}(s^3)  =  - 1/2 \times s^2 \sum_{i=1}^p \langle \Pi P_A u_i, P_A u_i \rangle_{L^2} + \mc{O}(s^3),
\]
where we recall that $P_A$ is defined in \eqref{equation:commutator-p}. Since $\Pi$ is non-negative and formally self-adjoint (see Lemma \ref{lemma:relations-resolvent}), each term in the previous sum is non-positive. Since the connection is assumed not to be opaque, there is a non-trivial $u \in \ker(\X)$ (we assume it is $L^2$-normalized) which is pointwise orthogonal to $\mathbbm{1}_{\mc{E}}$ (see Lemma \ref{lemma:resonances-zero}), namely it is trace-free everywhere on $SM$. Note that, without loss of generality, we can always assume that $u$ is skew-Hermitian (see \eqref{eq:skewHermitiandiscussion}. 
Moreover, there exists $c > 0$ such that for $s$ small enough:
\[
\Re(\lambda_s) \leq - c \langle \Pi P_A u, P_A u \rangle_{L^2}.
\]
The question is therefore whether one can find $A \in C^\infty(M,T^*M\otimes \mathrm{End}_{\mathrm{sk}}(\E))$ such that $\langle \Pi P_A u, P_A u \rangle_{L^2} > 0$ and by Lemma \ref{lemma:relations-resolvent}, it is sufficient to produce such an $A$ such that $\Pi P_A u \neq 0$. If this is the case, the perturbation allows to eject \emph{at least one} resonance outside $0$ (since the resonances can only move in the half-plane $\left\{\Re(z) \leq 0\right\}$) and thus, iterating the process (there is only a finite number of resonances at $0$), one ejects all the resonances of $\X = (\pi^* \nabla^{\mathrm{End}(\E)})_X$ at $0$, except the one induced by $\mathbbm{1}_{\mc{E}}$. (This is the same argument as in \S\ref{ssection:absence-ckt} and \S\ref{sssection:absence-ckts-endomorphism}.)

\begin{lemma}
\label{lemma:perturbation}
There exists a perturbation $A \in C^\infty(M,T^*M\otimes \mathrm{End}_{\mathrm{sk}}(\E))$, arbitrarily small in any $C^k$-norm (for $k \geq 0$), such that one has $\langle \Pi P_A u, P_A u \rangle_{L^2} > 0$.
\end{lemma}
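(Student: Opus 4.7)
The plan is to argue by contradiction. Suppose that $\langle \Pi P_A u, P_A u \rangle_{L^2} = 0$ for every $A \in C^\infty(M, T^*M \otimes \mathrm{End}_{\mathrm{sk}}(\E))$. Since $\Pi \geq 0$ is formally self-adjoint, Lemma \ref{lemma:relations-resolvent} then forces $\Pi P_A u = 0$, and hence there exist $w_A \in C^\infty(SM, \mathrm{End}(\pi^*\E))$ and $v_A \in \ker(\mathbf{X})$ with
\[
P_A u = \mathbf{X}w_A + v_A.
\]
Pairing in $L^2$ with the basis $u_1, \dotsc, u_p$ of $\ker(\mathbf{X})$ from Lemma \ref{lemma:resonances-zero} uniquely identifies $v_A = \sum_i \langle{P_A u, u_i}\rangle_{L^2}\, u_i$, so all the delicate information lies in the smoothness of $w_A$.

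Next I would extract a Livšic-type obstruction along each closed geodesic. Fix a closed orbit $\gamma$ of period $T$ and let $H_\gamma := C_\gamma(T)$ be its holonomy. Integrating the identity $\mathbf{X}w_A = P_A u - v_A$ along $\gamma$ after conjugation by $C_\gamma(t)$, and using that $u \in \ker(\mathbf{X})$ implies $C_\gamma(t)^{-1} u(\gamma(t)) C_\gamma(t) = u(\gamma(0))$, yields
\[
[\widetilde{A}_\gamma, u(\gamma(0))] = T\, v_A(\gamma(0)) + (\mathrm{Ad}_{H_\gamma^{-1}} - \mathbbm{1})\, w_A(\gamma(0)),
\]
where $\widetilde{A}_\gamma := \int_0^T C_\gamma(t)^{-1} A_{\pi\gamma(t)}(\dot\gamma(t))\, C_\gamma(t)\, dt \in \mathrm{End}_{\mathrm{sk}}(\E_{\gamma(0)})$. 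The right-hand side is constrained to lie in the subspace $\mathrm{span}\{u_i(\gamma(0))\} + \mathrm{Im}(\mathrm{Ad}_{H_\gamma^{-1}} - \mathbbm{1})$ of $\mathrm{End}(\E_{\gamma(0)})$.

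I would then vary $A$ by localizing it in a tiny coordinate chart around a single point $\pi\gamma(t_0)$ of $\gamma$. For such localized $A$, the quantity $\widetilde{A}_\gamma$ approaches (up to a normalization) $C_\gamma(t_0)^{-1} A_{\pi\gamma(t_0)}(\dot\gamma(t_0))\, C_\gamma(t_0)$ and, by the freedom in the $1$-form data at $\pi\gamma(t_0)$, ranges freely over all of $\mathrm{End}_{\mathrm{sk}}(\E_{\gamma(0)})$. Combined with density of closed orbits in $SM$ from the Anosov property (via the closing lemma), this pointwise surjectivity translates into the rigidity condition that $\mathrm{ad}_{u(x_0, v_0)}(\mathrm{End}_{\mathrm{sk}}(\E_{x_0}))$ is confined to a controlled-dimensional subspace for every $(x_0, v_0) \in SM$. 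Because $u(x_0, v_0)$ is skew-Hermitian, non-vanishing (Lemma \ref{lemma:resonances-zero}), and pointwise orthogonal to $\mathbbm{1}_\E$, invoking Lemma \ref{lemma:baby} shows this is only compatible with $u(x_0, v_0) \in \C\cdot \mathbbm{1}_{\E_{x_0}}$, which combined with $\Tr u = 0$ forces $u \equiv 0$, contradicting $u \neq 0$.

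The main obstacle will be the term $(\mathrm{Ad}_{H_\gamma^{-1}} - \mathbbm{1})\, w_A(\gamma(0))$: since $w_A$ depends non-locally on $A$ through the resolvent $\mathbf{R}_0^+$, and since $\mathrm{Im}(\mathrm{Ad}_{H_\gamma^{-1}} - \mathbbm{1})$ varies with $\gamma$, one cannot simply read off a uniform pointwise constraint. The heart of the argument is to combine the continuity properties of $\mathbf{R}_0^+$ on the anisotropic spaces $\mathcal{H}^s_\pm$ with the richness of the Anosov orbit structure so that the pointwise algebraic rigidity survives when the coboundary obstructions are eliminated.
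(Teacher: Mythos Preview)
Your approach is genuinely different from the paper's, and the gap you flag as the ``main obstacle'' is in fact fatal as the argument stands. The constraint you derive,
\[
[\widetilde{A}_\gamma, u(\gamma(0))] \in \mathrm{span}\{u_i(\gamma(0))\} + \mathrm{Im}\big(\mathrm{Ad}_{H_\gamma^{-1}} - \mathbbm{1}\big),
\]
is essentially vacuous. Since each $u_i \in \ker(\X)$, the values $u_i(\gamma(0))$ lie in the centralizer of $H_\gamma$, while $\mathrm{Im}(\mathrm{Ad}_{H_\gamma^{-1}} - \mathbbm{1})$ is exactly the orthogonal complement of that centralizer. So the right-hand subspace already contains $\mathrm{centralizer}(H_\gamma)^\perp$ together with a piece of the centralizer, and you have no control preventing it from being all of $\mathrm{End}(\E_{\gamma(0)})$. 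Thus no rigidity on $\mathrm{ad}_{u(\gamma(0))}$ follows. The deeper problem is that $w_A(\gamma(0))$ is coupled to $A$ through the global resolvent $\RR_0^+$, so as you vary $A$ the coboundary contribution moves as well; you cannot freeze it while sweeping $\widetilde{A}_\gamma$ through all skew-Hermitian matrices. Your closing sentence correctly names this as the heart of the matter but does not supply a mechanism to decouple the two. (There are also secondary issues: $u$ lives on $SM$, so localizing $A$ near $x_0 \in M$ interacts with $u(x_0, \cdot)$ at many directions, and long closed geodesics revisit neighborhoods of $x_0$ repeatedly; and Lemma~\ref{lemma:baby} does not by itself give the implication you need in the final step.)

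The paper avoids the Liv\v{s}ic route entirely. It observes that for each $m_0\in\N$ the map
\[
Q_{m_0}(A) := {\pi_{m_0}}_* \Pi\, [\pi_1^* A, u]
\]
is a pseudodifferential operator of order $-1$ on $M$ (using that $u$ is smooth and the structure of $\Pi$ from \cite{Guillarmou-17-1}), with principal symbol at $(x_0,\xi_0)$ given by an integral of $[\pi_1^*A_1, u](x_0, \cdot)$ against test tensors over the $(n-2)$-sphere $S_{x_0}M \cap \{\langle \xi_0, v\rangle = 0\}$. Since $u(x_0,v_0)$ is trace-free, non-zero, and skew-Hermitian, one can choose $A_*$ with $[A_*, u(x_0,v_0)] \neq 0$; then a suitable Fourier mode $f_{m_0}$ of $v \mapsto [A_*\otimes v_0^*(v), u(x_0,v)]$ restricted to that sphere is non-zero, which makes the principal symbol of $Q_{m_0}$ non-trivial. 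If $\Pi P_A u = 0$ for all $A$ then $Q_{m_0} \equiv 0$, contradicting the non-vanishing symbol. The point is that the principal symbol computation is \emph{local}, so it completely sidesteps the global coboundary term that blocks your argument.
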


\begin{proof} We begin the proof with a preliminary geometric discussion. Consider a point $(x_0,v_0) \in SM$. Since $u(x_0,v_0) \neq 0$ (by the first item of Lemma \ref{lemma:resonances-zero}) and $\Tr(u(x_0,v_0)) = 0$, we can find $A_* \in \mathrm{End}_{\mathrm{sk}}(\E_{x_0})$ such that $P_{A_*}u (x_0,v_0) = [A_*,u(x_0,v_0)] \neq 0$. (Indeed, if not then $[A_*,u(x_0,v_0)]= 0$ for any skew-Hermitian $A_*$, thus for any Hermitian $A_*$, thus for any endomorphism $A_*$, and so $u(x_0,v_0)$ is a multiple of the identity but $u(x_0,v_0)$ is trace-free and non-zero, which is impossible.) We then define $A_1(x_0) := A_* \otimes v_0^* \in T_{x_0}^*M \otimes \mathrm{End}_{\mathrm{sk}}(\E_{x_0})$. If $f := [\pi_1^* A_1(x_0,\cdot), u(x_0,\cdot)] \in C^\infty(S_{x_0}M,\mathrm{End}_{\mathrm{sk}}(\E_{x_0}))$, then $f$ is not identically zero since $f(v_0)=[\pi_1^* A_1(x_0,v_0), u(x_0,v_0)] = [A_*,u(x_0,v_0)] \neq 0$.

Now, consider a perpendicular direction $w_0 \in S_{x_0}M$ to $v_0$ and define the $(n-2)$-dimensional sphere
\[
\Ss_{w_0}^{n-2} := S_{x_0}M \cap \left\{\langle v , w_0 \rangle=0 \right\},
\]
(in particular, $v_0 \in \Ss_{w_0}^{n-2}$). Consider the restriction of $f$ to $\Ss_{w_0}^{n-2}$ (still denoted by $f$). By construction, this is not identically $0$ and we can decompose it in Fourier series $f = \sum_{m \geq 0} f_m$, where each $f_m$ is a spherical harmonic of degree $m$ on $\Ss_{w_0}^{n-2}$. Since $f \neq 0$, there exists $m_0 \in \N$ such that $f_{m_0} \neq 0$. 

We then consider the operator
\[
Q_{m_0} : C^{\infty}(M,T^*M \otimes \mathrm{End}_{\mathrm{sk}}(\E)) \rightarrow C^\infty(M,\otimes^{m_0}_S T^*M \otimes \mathrm{End}(\E)),
\]
defined by
\[
Q_{m_0}(A) := - {\pi_{m_0}}_* \Pi P_u \pi_1^* A = {\pi_{m_0}}_* \Pi [\pi_1^* A, u],
\]
where $P_u f = [u,f]$, $f \in C^\infty(SM,\mathrm{End}(\E))$, is defined similarly to $P_A$.


Following the same arguments as in \cite[Theorem 3.1]{Guillarmou-17-1} (see also Remark 3.9 in the same paper), one proves that $Q_{m_0}$ is a pseudodifferential operator of order $-1$ (it is important here to use that $u$ is smooth). Following \cite[Theorem 4.4]{Gouezel-Lefeuvre-19}, one can compute its principal symbol and, given $(x_0,\xi_0) \in T^*M$ and $S \in C^\infty(M)$ such that $S(x_0)=0, \dd S(x_0) = \xi_0$, one finds that for any $A_{m_0} \in \otimes_S^{m_0} T^*_{x_0}M \otimes \End \E_{x_0}$: 
\begin{multline}
\label{equation:symbole-principal}
\langle \sigma(Q_{m_0})(x_0,\xi_0)A_1(x_0), A_{m_0}(x_0) \rangle_{x_0}  = \lim_{h \rightarrow 0} h^{-1} \langle Q_{m_0}(e^{i/h S} A_1), A_{m_0} \rangle_{x_0} \\
 = \dfrac{2\pi}{|\xi_0|} \int_{S_{x_0}M \cap \langle \xi_0, v \rangle = 0} \langle [\pi_1^* A_1,u](x_0,v), \pi_{m_0}^* A_{m_0}(x_0,v) \rangle_{x_0} \dd S_{\xi_0}(v),
\end{multline}
where $\dd S_{\xi_0}(v)$ is the canonical measure induced on the $(n-2)$-dimensional sphere $S_{x_0}M \cap \left\{\langle \xi_0, v \rangle = 0\right\} =: \Ss^{n-2}_{\xi_0^\sharp}$. It is now sufficient to show that the principal symbol of $Q_{m_0}$ is not zero in order to conclude: indeed, if $\langle{\Pi P_A u, P_Au}\rangle_{L^2} = 0$, by Lemma \ref{lemma:relations-resolvent} we have $\Pi P_A u = 0$, so if $\Pi P_A u = 0$ for all $A$, in particular we have $Q_{m_0} A = 0$ for all $A$, which cannot be the case if $\sigma(Q_{m_0}) \neq 0$. For the former claim, we consider the $v_0 \in S_{x_0}M$ introduced in the preceding discussion and $\xi_0 \in T^*_{x_0}M$ such that $\xi_0^\sharp \bot v_0$ (thus $v_0 \in \left\{\langle \xi_0,v\rangle=0\right\}$). We take $A_1(x_0) = A_* \otimes v_0^*$. We consider $A_{m_0} \in \otimes^{m_0}_S T^*_{x_0}M \otimes \mathrm{End}_{\mathrm{sk}}(\E_{x_0})$ such that the restriction of $\pi_{m_0}^* A_{m_0}$ to $\Ss^{n-2}_{\xi_0^\sharp}$ is equal to $f_{m_0}$. (For that, consider $A_{m_0} \in  \otimes^{m_0}_S (T^*_{x_0}M \cap (\R \xi_0)^\bot)|_{0-\Tr} \otimes \mathrm{End}_{\mathrm{sk}}(\E_{x_0})$ such that $\pi_{m_0}^* A_{m_0} = f_{m_0}$, which is always possible since
\[
\pi_{m_0}^* :  \otimes^{m_0}_S (T^*_{x_0}M \cap (\R \xi_0)^\bot)|_{0-\Tr} \otimes \mathrm{End}_{\mathrm{sk}}(\E_{x_0}) \rightarrow \Omega_{m_0}(\Ss^{n-2}_{\xi_0^\sharp}) \otimes \mathrm{End}_{\mathrm{sk}}(\E_{x_0})
\]
is an isomorphism, and then extend naturally $A_{m_0}$ as an element of $\otimes^{m_0}_S T^*_{x_0}M \otimes \mathrm{End}_{\mathrm{sk}}(\E_{x_0})$.) We then obtain using \eqref{equation:symbole-principal}:
\[
\begin{split}
\langle \sigma(Q_{m_0})(x_0,\xi_0)A_1(x_0), A_{m_0}(x_0) \rangle_{x_0} & = \dfrac{2\pi}{|\xi_0|} \int_{S_{x_0}M \cap \langle \xi_0, v \rangle = 0} \langle f(x_0,v), \pi_{m_0}^* A_{m_0}(x_0,v) \rangle_{x_0} \dd S_{\xi_0}(v)  \\
& = \dfrac{2\pi}{|\xi_0|} \int_{\Ss^{n-2}_{\xi_0^\sharp}} \sum_{m \geq 0} \langle f_m(x_0,v), f_{m_0}(x_0,v)  \rangle_{x_0}\dd S_{\xi_0}(v) \\
& = \dfrac{2\pi}{|\xi_0|} \|f_{m_0}\|^2_{L^2(\Ss^{n-2}_{\xi_0^\sharp})} > 0,
\end{split}
\]
since $f_{m_0} \neq 0$.
\end{proof}

This concludes the proof of Theorem \ref{theorem:generic-opacity}. 

\end{proof}

\bibliographystyle{alpha}
\bibliography{Biblio}

\end{document}